\newcommand{\R}{\mathbb{R}}
\newcommand{\Rplus}{\mathbb{R}_+}
\newcommand{\N}{\mathbb{N}}
\newcommand{\union}{\cup}
\newcommand\intersection{\cap}
\newcommand\closure{\operatorname{cl}}
\newcommand\after{\circ}
\newcommand\measuredlams{\mathcal{ML}}
\newcommand\projmeasuredlams{{\mathcal{PML}}}
\newcommand\measuredfoliations{\mathcal{MF}}
\newcommand\sing{\operatorname{Sing}}
\newcommand\modulargroup{\operatorname{Mod}_S}
\newcommand\teichmullerspace{\mathcal{T}(S)}
\newcommand\teichmullerspc{\mathcal{T}}
\newcommand\thurstoncompact{\mathcal{T}^T(S)}
\newcommand\hlength{\ell}
\newcommand\unitlams{\mathcal{P}}
\newcommand\bonint{i}
\newcommand\leb{\mu}
\newcommand\iso{f}
\newcommand\mapclass{h}
\newcommand\distfn{\psi}
\newcommand\horocyclic{\phi}
\newcommand\curvecomp{\mathcal{C}(S)}
\newcommand\curvecmp{\mathcal{C}}
\newcommand\curve{\alpha}
\newcommand\scurv{\beta}
\newcommand\curves{\mathcal{S}}
\newcommand\slam{\nu}
\newcommand\blam{\mu}
\newcommand\lengthfunc{\mathcal{L}}
\newcommand\stretchdist{L}
\newcommand\lfactor{Q}
\newcommand\stretchline{\Gamma}
\newcommand\dist{d}
\newcommand\symdist{d_{\text{sym}}}
\newcommand\maxset{R}
\newcommand\geo{\gamma}
\newcommand\isometric{\cong}
\newtheorem*{theorem*}{Theorem}
\newtheorem*{proposition*}{Proposition}
\newtheorem*{thmiso}{Theorem~\ref{thm:isometries}}
\newtheorem*{thmhoro}{Theorem~\ref{thm:horothurston}}
\newtheorem*{thmdistinct}{Theorem~\ref{thm:distinct}}
\newenvironment{remark}[1][Remark.]{\begin{trivlist}
\item[\hskip \labelsep {\bfseries #1}]}{\end{trivlist}}
\newtheorem{prop}{Proposition}[section]
\newtheorem{proposition}[prop]{Proposition}
\newtheorem{corollary}[prop]{Corollary}
\newtheorem{lemma}[prop]{Lemma}
\newtheorem{assumption}{Assumption}
\newtheorem{theorem}[prop]{Theorem}
\theoremstyle{definition}
\begin{document}

\title[Horoboundary and Isometry group of Thurston's Metric]
{The Horoboundary and Isometry group of Thurston's Lipschitz Metric}
\author{Cormac Walsh}
\address{INRIA Saclay \& Centre de Math\'ematiques Appliqu\'es,
Ecole Polytechnique, 91128 Palaiseau, France}
\email{cormac.walsh@inria.fr}

\keywords{horoboundary, Teichmuller space, Lipschitz metric, stretch metric,
isometries}

\begin{abstract}
We show that the horofunction boundary of Teichm\"uller space with Thurston's
Lipschitz metric is the same as the Thurston boundary.
We use this to determine the isometry group of the Lipschitz metric,
apart from in some exceptional cases.
We also show that the Teichm\"uller spaces of different surfaces, when
endowed with this metric, are not isometric, again with some possible
exceptions of low genus.
\end{abstract}

\maketitle

\section{Introduction}
\label{sec:intro}

Consider a connected oriented surface $S$ of negative Euler characteristic.
A hyperbolic metric on $S$ is a Riemannian metric of constant curvature $-1$.
The Teichm\"uller space $\teichmullerspace$ of $S$ is the space of complete
finite-area hyperbolic metrics on $S$ up to isotopy.

In~\cite{thurston_minimal}, Thurston defined an asymmetric metric on
Teichm\"uller space:
\begin{align*}
\stretchdist(x,y) := \log \inf_{\phi\approx\text{Id}} \sup_{p\neq q}
\frac{d_y(\phi(p),\phi(q))}{d_x(p,q)},
\qquad\text{for $x,y\in\teichmullerspace$}.
\end{align*}
In other words, the distance from $x$ to $y$ is the logarithm of the smallest
Lipschitz constant over all homeomorphisms from $x$ to $y$ that are isotopic
to the identity.
Thurston showed that this is indeed a metric, although a non-symmetric one.
In the same paper, he showed that this distance can be written
\begin{align*}
\stretchdist(x,y)
   = \log \sup_{\curve\in\curves} \frac{\hlength_y(\alpha)}{\hlength_x(\alpha)},
\end{align*}
where $\curves$ is the set of isotopy classes of non-peripheral simple
closed curves on $S$, and $\hlength_x(\alpha)$ denotes the shortest length
in the metric $x$ of a curve isotopic to $\alpha$.

Thurston's Lipschitz metric\index{Thurston's Lipschitz metric}\index{Thurston's metric}\index{Lipschitz metric}
has not been as intensively studied as the
Teichm\"uller metric or the Weil--Petersson metric.
The literature includes~\cite{papadopoulos_extension, theret_thesis,
papadopoulos_theret_topology, theret_negative, papadopoulos_theret_polyhedral,
theret_elementary,choi_rafi_comparison, theret_divergence};
most of it is very recent.

In this chapter, we determine the horofunction boundary of the Lipschitz metric.
The horofunction boundary of a metric space was introduced by
Gromov in~\cite{gromov:hyperbolicmanifolds}, and
has applications in studying isometry groups~\cite{walsh_lemmens_polyhedral},
random walks~\cite{karlsson_ledrappier_laws},
quantum metric spaces~\cite{rieffel_group},
and is the right setting for Patterson--Sullivan
measures~\cite{burger_mozes_commensurators}.

We show that the compactification of Teichm\"uller space by horofunctions
is isomorphic to the Thurston compactification, and we give an explicit
expression for the horofunctions. Denote by $\measuredlams$ the space
of measured laminations on $S$
and by $\projmeasuredlams$ its projectivization.

\begin{thmhoro}
A sequence $x_n$ in $\teichmullerspace$ converges in the Thurston
compactification\index{Thurston compactification}\index{compactification!Thurston}
if and only if it converges in the horofunction compactification.
If the limit in the Thurston compactification is the projective class
$[\mu]\in\projmeasuredlams$, then the limiting horofunction is
\begin{align*}
\Psi_\mu(x) =
   \log\left(\sup_{\eta\in\measuredlams}
         \frac{i(\mu,\eta)}{\hlength_x(\eta)}
   \middle/
   \sup_{\eta\in\measuredlams}
         \frac{i(\mu,\eta)}{\hlength_b(\eta)}
   \right).
\end{align*}
\end{thmhoro}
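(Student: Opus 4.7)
The plan is to expand the horofunction using Thurston's length-function formula and read off the limit from the Thurston-compactification convergence; the converse will be extracted from compactness of $\projmeasuredlams$ together with injectivity of the map $[\mu]\mapsto\Psi_\mu$.

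Fix the basepoint $b\in\teichmullerspace$ used to normalize horofunctions, so the function associated to a sequence $x_n$ is $h_{x_n}(x) = \stretchdist(x,x_n) - \stretchdist(b,x_n)$. Applying Thurston's formula to each term, and using that $\hlength_x$ extends continuously and positively homogeneously to $\measuredlams$ (so that the sup over $\curves$ equals the sup over $\measuredlams$), we rewrite
\[
h_{x_n}(x) = \log\left(\sup_{\eta\in\measuredlams}\frac{\hlength_{x_n}(\eta)}{\hlength_x(\eta)}\middle/\sup_{\eta\in\measuredlams}\frac{\hlength_{x_n}(\eta)}{\hlength_b(\eta)}\right).
\]
Now assume $x_n\to[\mu]$ in the Thurston compactification. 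By definition there exist scalars $c_n>0$ with $\hlength_{x_n}/c_n\to i(\mu,\cdot)$ pointwise; this upgrades to uniform convergence on the compact sphere $\{\eta:\hlength_b(\eta)=1\}$ by standard arguments for continuous positively homogeneous functions on the cone $\measuredlams$. Since $c_n$ factors out of each sup and cancels in the difference of logarithms, it suffices to exchange sup and limit in each term, which is justified by the compactness of the relevant unit spheres in $\measuredlams$ together with uniform convergence of $\hlength_{x_n}/c_n$ to $i(\mu,\cdot)$ there; this yields $h_{x_n}(x)\to\Psi_\mu(x)$ pointwise.

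For the converse, suppose $h_{x_n}$ converges in the horofunction compactification. By compactness of $\teichmullerspace\cup\projmeasuredlams$, every subsequence of $x_n$ admits a Thurston-convergent sub-subsequence with some limit $[\mu]$, and by the first direction the corresponding horofunctions converge to $\Psi_\mu$. Hence if $[\mu]\mapsto\Psi_\mu$ is injective, all subsequential Thurston limits coincide and $x_n$ itself converges in the Thurston compactification. Continuity of this map follows from continuity of the intersection pairing in both arguments, so combined with compactness of $\projmeasuredlams$ one obtains a homeomorphism between the Thurston and horofunction compactifications.

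The principal obstacle is injectivity of $[\mu]\mapsto\Psi_\mu$. The strategy is to recover $[\mu]$ from the family of values $\Psi_\mu(x)$ by probing with sequences $x_k$ that degenerate toward test laminations $\nu$: on such a sequence $\hlength_{x_k}(\eta)$ asymptotically scales like $i(\nu,\eta)$ on a dense set of $\eta$, so the sup defining $\Psi_\mu(x_k)$ asymptotically records $i(\mu,\nu)$ up to a normalization depending only on $\nu$. Since the intersection pairing separates projective classes in $\projmeasuredlams$, varying $\nu$ recovers $[\mu]$ and completes the identification of the two compactifications.
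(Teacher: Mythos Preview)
Your overall architecture---continuity of $[\mu]\mapsto\Psi_\mu$, injectivity, then compactness---matches the paper's. The forward direction is essentially right; the paper makes the ``upgrade to uniform convergence'' precise via Bonahon's embedding of both $\teichmullerspace$ and $\measuredlams$ into geodesic currents, where the intersection form is jointly continuous (Lemma~\ref{lem:uniform_inter} and Proposition~\ref{prop:uniform_lengths}), rather than appealing to unspecified ``standard arguments''.

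The genuine gap is your injectivity argument. The claim that, as $x_k\to[\nu]$, ``the sup defining $\Psi_\mu(x_k)$ asymptotically records $i(\mu,\nu)$ up to a normalization depending only on $\nu$'' is incorrect. Since $\hlength_{x_k}(\eta)/c_k\to i(\nu,\eta)$, what one formally obtains is $\sup_\eta i(\mu,\eta)/i(\nu,\eta)$, which is \emph{not} $i(\mu,\nu)$ and is typically $+\infty$: any $\eta$ with $i(\nu,\eta)=0$ but $i(\mu,\eta)>0$ makes it blow up. Indeed, the maximizing $\eta$'s in $\sup_\eta i(\mu,\eta)/\hlength_{x_k}(\eta)$ do not approach $\nu$; they run off toward the zero set $\{\eta:i(\nu,\eta)=0\}$ (this is precisely Lemma~\ref{lem:optimal_dir}). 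So you cannot read off $i(\mu,\nu)$ from this probe, and the separation-by-intersection-pairing step does not go through.

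The paper's injectivity proof (Lemma~\ref{lem:injective}) turns this same concentration phenomenon to its advantage. Given distinct $x,y$, one picks a neighbourhood $N$ in $\unitlams$ on which $\lengthfunc_x<\lengthfunc_y$, and inside $N$ a \emph{maximal uniquely-ergodic} $\mu'$ (these are dense). For such $\mu'$ the zero set $\{\eta:i(\mu',\eta)=0\}$ collapses to the single point $[\mu']$, so by Lemma~\ref{lem:optimal_dir} the maximizers of $\lengthfunc_x(\cdot)/\hlength_{p_n}(\cdot)$ are forced into $N$ once $p_n$ is close enough to $[\mu']$. This localizes the supremum to $N$, where the strict inequality $\lengthfunc_x<\lengthfunc_y$ gives $\Psi_x(p)<\Psi_y(p)$ at a single interior point $p$. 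Your sketch is missing both this localization mechanism and the essential role of maximal uniquely-ergodic laminations.
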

Here, $b$ is a base-point in $\teichmullerspace$, and
$i(\cdot,\cdot)$ denotes the geometric intersection number.
Recall that the latter is defined for pairs of curve classes
$(\alpha,\beta)\in\curves\times\curves$ to be the minimum number of
transverse intersection points of curves $\alpha'$ and $\beta'$ with
$\alpha'\in\alpha$ and $\beta'\in\beta$. This minimum is realised if
$\alpha'$ and $\beta'$ are closed geodesics. The geometric intersection number
extends to a continuous symmetric function on
$\measuredlams\times\measuredlams$.

It is known that geodesics always converge to a point in the horofunction
boundary; see Section~\ref{horoboundary}.
Hence, an immediate consequence of the above theorem is the following.
\begin{corollary}
Every geodesic of Thurston's Lipschitz metric converges in the forward direction
to a point in the Thurston boundary.\index{Thurston boundary}\index{boundary!Thurston}
\end{corollary}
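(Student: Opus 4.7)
The plan is to deduce this corollary directly from Theorem~\ref{thm:horothurston} together with the general principle, recalled in Section~\ref{horoboundary}, that every geodesic ray in a (possibly asymmetric) metric space converges in the forward direction to a point in its horofunction boundary.

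Concretely, given a forward geodesic ray $\geo\colon[0,\infty)\to\teichmullerspace$ for Thurston's Lipschitz metric, the first step is to show that the normalised distance functions $x\mapsto\stretchdist(x,\geo(t))-\stretchdist(b,\geo(t))$ are monotone decreasing in $t$ along the ray. This is a standard computation that uses only the forward triangle inequality together with the equality case of it realised by a geodesic, so it goes through despite the asymmetry of $\stretchdist$. Combined with the obvious $1$-Lipschitz estimate for these functions in the variable $x$, an Arzel\`a--Ascoli and diagonal argument then produces a horofunction $\Psi$ as the pointwise limit, and uniqueness of this limit along any exhausting sequence of parameters gives convergence of $\geo(t)$ in the horofunction compactification.

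Next, I would invoke Theorem~\ref{thm:horothurston}: every horofunction has the form $\Psi_\mu$ for some projective measured lamination $[\mu]\in\projmeasuredlams$, and convergence in the horofunction compactification is equivalent to convergence in the Thurston compactification. Hence $\geo(t)$ converges in the Thurston compactification to the projective class $[\mu]$, which lies in the Thurston boundary, as required.

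The only substantive point is therefore the asymmetric-metric version of the geodesics-to-horofunctions statement. This is the main obstacle, but it is a soft, general-metric-space fact that the paper has evidently already isolated in Section~\ref{horoboundary}; no Teichm\"uller-specific input is needed for it, so once Theorem~\ref{thm:horothurston} is in place the corollary is truly immediate.
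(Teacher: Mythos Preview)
Your proposal is correct and follows the same approach as the paper: reduce to the general fact that geodesic rays converge in the horofunction compactification, then invoke Theorem~\ref{thm:horothurston} to translate this into convergence in the Thurston compactification. The paper does not spell out the first step either; it simply points to Section~\ref{horoboundary}, where Rieffel's result that almost-geodesics converge to points of $X(\infty)$ is recalled (and a geodesic ray is trivially an almost-geodesic).

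One small imprecision in your sketch: the functions $t\mapsto \stretchdist(x,\geo(t))-\stretchdist(b,\geo(t))$ are \emph{not} monotone in $t$ unless the ray happens to start at the base-point $b$. What is monotone decreasing for an arbitrary ray is $t\mapsto \stretchdist(x,\geo(t))-t$, by the triangle inequality combined with $\stretchdist(\geo(s),\geo(t))=t-s$; this quantity is also bounded below by $-\stretchdist(\geo(0),x)$. Taking the limit for each $x$ and then subtracting the value at $b$ gives the pointwise limit of $\distfn_{\geo(t)}$. This is exactly the standard argument you have in mind, so the fix is cosmetic.
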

This generalises a result of Papadopoulos~\cite{papadopoulos_extension},
which states that every member of a special class of geodesics,
the \emph{stretch lines}, converges in the forward direction to a point
in the Thurston boundary.

The action of the isometry group of a metric space extends continuously
to an action by homeomorphisms on the horofunction boundary.
Thus, the horofunction boundary is useful for studying groups
of isometries of metric spaces.
One of the tools it provides is the \emph{detour cost}, which is a kind of
metric on the boundary. We calculate this in Section~\ref{sec:thurston_detour}.

Denote by $\modulargroup$ the extended mapping class
group\index{extended mapping class group}\index{modular group}
of $S$, that is, the group of isotopy classes of homeomorphisms of $S$.
It is easy to see that $\modulargroup$ acts by isometries on
$\teichmullerspace$ with the Lipschitz metric.
We use the detour cost to prove the following.
\begin{thmiso}
If $S$ is not a sphere with four or fewer punctures,
nor a torus with two or fewer punctures, then every isometry of
$\teichmullerspace$ with Thurston's Lipschitz metric is an element
of the extended mapping class group $\modulargroup$.
\end{thmiso}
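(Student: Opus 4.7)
The plan is to exploit the identification of the horofunction boundary with $\projmeasuredlams$ given by Theorem~\ref{thm:horothurston}, together with the detour cost computation promised in Section~\ref{sec:thurston_detour}, to force every isometry to act on $\projmeasuredlams$ as an element of the extended mapping class group. Any isometry $\iso$ of $(\teichmullerspace,\stretchdist)$ extends to a homeomorphism $\iso^{\ast}$ of the horofunction boundary, which by Theorem~\ref{thm:horothurston} is canonically $\projmeasuredlams$. Being a purely metric invariant of pairs of boundary points, the detour cost must be preserved by $\iso^{\ast}$.

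The first substantive step is to show that $\iso^{\ast}$ preserves the subset of $\projmeasuredlams$ consisting of projective classes of weighted simple closed curves. Using the explicit formula for $\Psi_\mu$ from Theorem~\ref{thm:horothurston} together with the detour cost computed in Section~\ref{sec:thurston_detour}, I expect these projective classes to be distinguished intrinsically by a sharp finiteness/degeneracy behaviour of the detour cost (as happens for Busemann points in related settings). Once this is established, $\iso^{\ast}$ induces a self-bijection of $\curves$.

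Next I would show that this induced bijection preserves disjointness, hence defines an automorphism of the curve complex $\curvecomp$. The key is to read off the vanishing of $\bonint(\curve,\scurv)$ from the detour cost between $\Psi_\curve$ and $\Psi_\scurv$, since the intersection pairing enters the horofunction formula directly. Given an automorphism of $\curvecomp$, the Ivanov--Korkmaz--Luo theorem applies precisely under the stated hypothesis (excluding a sphere with at most four punctures and a torus with at most two) and yields a unique $\mapclass\in\modulargroup$ inducing this automorphism on $\curvecomp$.

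It remains to show $\iso=\mapclass$. Replacing $\iso$ by $\mapclass^{-1}\after\iso$, I may assume that $\iso^{\ast}$ fixes every projective class $[\curve]$ of a simple closed curve. For such $[\curve]$ the horofunction $\Psi_\curve$ reduces essentially to $\log(\hlength_x(\curve)/\hlength_b(\curve))$, so invariance under $\iso^{\ast}$ translates into $\hlength_{\iso(x)}(\curve)/\hlength_x(\curve)$ being independent of $x$. Combined with the isometry property applied to well-chosen pairs, and with the fact that the marked simple length spectrum separates points of $\teichmullerspace$, this should force $\iso$ to be the identity. The main obstacle I anticipate is the second step: extracting the combinatorial statement of disjointness from the analytic detour cost in a uniform way. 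The asymmetry of $\stretchdist$ propagates to an asymmetry of the detour cost, so one must be careful throughout about which direction of convergence along almost-geodesics carries the relevant information.
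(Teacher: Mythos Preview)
Your overall architecture matches the paper's: extend the isometry to $\projmeasuredlams$, show it induces an automorphism of $\curvecomp$, invoke Ivanov--Korkmaz--Luo, and then prove that an isometry acting trivially on the boundary is the identity. Two of your steps, however, diverge from the paper in ways that leave real gaps.

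\textbf{Recognising curves.} You hope to single out projective classes of simple closed curves by a ``finiteness/degeneracy'' pattern in the detour cost. By Corollary~\ref{cor:detourmetric}, a boundary point $[\mu]$ has $\delta(\Psi_\mu,\Psi_\nu)<\infty$ only for $[\nu]=[\mu]$ precisely when $\mu$ is ergodic; this does not distinguish curves from uniquely-ergodic non-curve laminations. The paper instead first uses the detour cost (via the relation $\ll$) to show that $\iso$ preserves the zero-intersection relation (Lemma~\ref{lem:preserveszero}), hence preserves the sets $\mu^\perp$, and then invokes Ivanov's codimension characterisation (Theorem~\ref{thm:ivanov_codimension}): $[\mu]$ is a curve class iff $\mu^\perp$ has codimension~$1$. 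You need this topological input; the detour cost alone does not do it.

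\textbf{From boundary rigidity to interior rigidity.} Your final step rests on the claim that for a curve $\curve$ the horofunction $\Psi_\curve(x)$ is essentially $\log\big(\hlength_x(\curve)/\hlength_b(\curve)\big)$. This is false: by Theorem~\ref{thm:horothurston},
\[
\Psi_\curve(x)=\log\sup_{\eta\in\measuredlams}\frac{i(\curve,\eta)}{\hlength_x(\eta)}-\log\sup_{\eta\in\measuredlams}\frac{i(\curve,\eta)}{\hlength_b(\eta)},
\]
which is not a function of $\hlength_x(\curve)$. Consequently your length-spectrum argument does not get started. The paper's route is entirely different: it shows (Lemmas~\ref{lem:maxset_mu} and~\ref{lem:unique_geo}) that a stretch line directed by a complete lamination with maximal uniquely-ergodic stump is the \emph{unique} geodesic with its pair of endpoints in $\projmeasuredlams$. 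An isometry fixing the boundary therefore preserves each such stretch line setwise and acts on it by translation; running two such stretch lines through a given point with distinct endpoints forces the translation to be zero (Lemma~\ref{lem:same_on_T}).
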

This answers a question in~\cite[\S4]{papadopoulos_theret_handbook}.

It is well known that the subgroup of elements of $\modulargroup$ acting
trivially on $\teichmullerspace$ is of order two if $S$ is the closed surface
of genus two,
and is just the identity element in the other cases considered here.

Theorem~\ref{thm:isometries} is an analogue of Royden's theorem\index{Royden's theorem}
concerning the Teichm\"uller metric, which was proved by
Royden~\cite{royden_isometries} in the case of compact surfaces and analytic
automorphisms of $\teichmullerspace$, and extended to the general case by
Earle and Kra~\cite{earle_kra_isometries}.
Our proof is inspired by Ivanov's proof of Royden's theorem,
which was global and geometric in nature, as opposed to the original,
which was local and analytic.

The following theorem shows that distinct surfaces give rise to distinct
Teichm\"uller spaces, except possibly in certain cases.
Denote by $S_{g,n}$ a surface of genus $g$ with $n$ punctures.
\begin{thmdistinct}
Let $S_{g,n}$ and $S_{g',n'}$ be surfaces of negative Euler characteristic.
Assume $\{(g,n), (g',n')\}$ is different from each of the three sets
\begin{align*}
\{(1,1), (0,4)\}, \quad \{(1,2), (0,5)\}, \quad\text{and}\quad
\{(2,0), (0,6)\}.
\end{align*}
If $(g,n)$ and $(g',n')$ are distinct, then the Teichm\"uller spaces
$\teichmullerspc(S_{g,n})$ and $\teichmullerspc(S_{g',n'})$ with their
respective Lipschitz metrics are not isometric.
\end{thmdistinct}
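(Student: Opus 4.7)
The plan is to distinguish Teichmüller spaces by topological dimension when that suffices, and when the dimensions coincide, to transfer any putative isometry into an isomorphism of extended mapping class groups via Theorem~\ref{thm:isometries}, then invoke rigidity results for mapping class groups.

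The real dimension of $\teichmullerspc(S_{g,n})$ equals $6g-6+2n$ and is a topological invariant, so differing dimensions already preclude isometry. Hence I may assume $3g+n = 3g'+n'$. A short enumeration of the surfaces of negative Euler characteristic with a prescribed value of $3g+n$ shows the following. For $3g+n \in \{4,5\}$ the only pair of distinct $(g,n)$'s is exactly one of the first two excepted pairs. For $3g+n = 6$ the possible pairs are $\{(0,6),(1,3)\}$, $\{(0,6),(2,0)\}$ and $\{(1,3),(2,0)\}$, of which only the middle one is excepted. For $3g+n \geq 7$ every surface that can appear is neither a sphere with at most four punctures nor a torus with at most two punctures, so Theorem~\ref{thm:isometries} applies to it. Thus in every pair still to be handled, both surfaces satisfy the hypothesis of Theorem~\ref{thm:isometries}.

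Now suppose, for contradiction, that $\iso \colon \teichmullerspc(S_{g,n}) \to \teichmullerspc(S_{g',n'})$ is an isometry. Conjugation $\mapclass \mapsto \iso \after \mapclass \after \iso^{-1}$ is then an isomorphism between the full isometry groups of the two metric spaces. By Theorem~\ref{thm:isometries}, the isometry group of each of these spaces coincides with the image of the corresponding extended mapping class group under its natural action on Teichmüller space. That action is faithful except when the surface is the closed surface of genus two, where the kernel is the order-two subgroup generated by the hyperelliptic involution. We therefore obtain an isomorphism between $\operatorname{Mod}(S_{g,n})$ and $\operatorname{Mod}(S_{g',n'})$, up to this possible order-two ambiguity.

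The final step is to rule out such an isomorphism of extended mapping class groups using their known rigidity. By results of Ivanov, McCarthy, Korkmaz and others, for surfaces of negative Euler characteristic beyond the small low-complexity exceptions, the extended mapping class groups of two surfaces are abstractly isomorphic only when the surfaces are homeomorphic, the sole additional isomorphism being the hyperelliptic quotient $\operatorname{Mod}(S_{2,0})/\langle\iota\rangle \cong \operatorname{Mod}(S_{0,6})$ — which corresponds to the third excepted pair. This disposes, for instance, of $\{(0,6),(1,3)\}$ and $\{(1,3),(2,0)\}$ among the dimension-six pairs, and of every pair of higher dimension. I expect the main obstacle to be locating and citing the rigidity statement in the precise form required and tracking the ineffective kernels on both sides carefully enough that the extracted isomorphism really is one that rigidity forbids.
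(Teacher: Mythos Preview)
Your proposal is correct in outline but takes a genuinely different route from the paper. You pass through Theorem~\ref{thm:isometries} to identify the isometry groups with (quotients of) extended mapping class groups, and then appeal to abstract rigidity of mapping class groups to conclude. The paper instead reuses directly the machinery behind Lemma~\ref{lem:same_on_PML}: an isometry $\iso:\teichmullerspc(S_{g,n})\to\teichmullerspc(S_{g',n'})$ extends to a homeomorphism of Thurston boundaries, the detour cost shows it preserves zero intersection number, Ivanov's codimension theorem (Theorem~\ref{thm:ivanov_codimension}) shows it preserves the set of simple closed curves, and hence $\iso$ induces an isomorphism $\curvecmp(S_{g,n})\to\curvecmp(S_{g',n'})$. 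The conclusion then follows from Luo's classification of curve complex isomorphisms between distinct surfaces, which is a single clean citation.

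The tradeoff: your approach is conceptually pleasant because it black-boxes Theorem~\ref{thm:isometries} and reduces everything to a group-theoretic question, but as you yourself note, it requires a precise external statement about abstract isomorphisms of extended mapping class groups and careful handling of the hyperelliptic kernel in the $S_{2,0}$ case. The paper's approach avoids this entirely by staying at the level of curve complexes, where Luo's lemma gives exactly what is needed, and it recycles arguments already written down for Lemma~\ref{lem:same_on_PML} rather than invoking the full strength of Theorem~\ref{thm:isometries}.
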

This is an analogue of a theorem of Patterson~\cite{patterson_distinct}.
\index{Patterson's Theorem}
In the case of the Teichm\"uller metric it is known that one has the following
three isometric equivalences:
$\teichmullerspc(S_{1,1})\isometric \teichmullerspc(S_{0,4})$, $\teichmullerspc(S_{1,2})\isometric \teichmullerspc(S_{0,5})$, and
$\teichmullerspc(S_{2,0})\isometric \teichmullerspc(S_{0,6})$.
It would be interesting to know if these equivalences still hold when
one takes instead the Lipschitz metric.

It would also be interesting to work out the horofunction boundary of
the\index{reversed Lipschitz metric}\index{Lipschitz metric!reversed}
reversed Lipschitz metric,
that is, the metric $L^*(x,y):=L(y,x)$.
Since $L$ is not symmetric, $L^*$ differs from $L$, and one would expect
their horofunction boundaries to also differ.

The contents of this chapter are as follows. In the next section, we recall the
horofunction boundary of a metric space. In section~\ref{sec:thurston_horo},
we show that the horofunction boundary of the Lipschitz metric is the Thurston
boundary. In section~\ref{sec:busemann_points}, we recall the definition
of stretch line and prove that all horofunctions of the Lipschitz metric
are Busemann points. In section~\ref{sec:detour}, we recall some results
about the detour cost, which we calculate for the Lipschitz metric
in section~\ref{sec:thurston_detour}. Section~\ref{sec:isometries} is
devoted to the proofs of Theorems~\ref{thm:isometries} and~\ref{thm:distinct}.

\paragraph{Acknowledgments:}
We thank Weixu Su for some comments on an early draft and Micka\"el Crampon
for a detailed reading.

\section{The horofunction boundary}
\label{horoboundary}

Let $(X,\dist)$ be a possibly non-symmetric metric space,
\index{non-symmetric metric space}
in other words, $\dist$ has all the properties of a metric except that
it is not necessarily symmetric.

We endow $X$ with the topology induced by the symmetrised metric
\index{symmetrised metric}
$\symdist(x,y):=\dist(x,y)+\dist(y,x)$.
Note that for Thurston's Lipschitz metric, this topology is just the usual one
on $\teichmullerspace$; see~\cite{papadopoulos_theret_topology}.

The horofunction boundary of $(X,\dist)$ is defined as follows.\index{horofunction boundary}\index{boundary!horofunction}
One assigns to each point $z\in X$ the function
$\distfn_z:X\to \R$,
\begin{equation*}
\distfn_z(x) := \dist(x,z)-\dist(b,z),
\end{equation*}
where $b$ is some base-point.
Consider the map $\distfn:X\to C(X),\, z\mapsto \distfn_z$ from
$X$ into $C(X)$, the space of continuous real-valued functions
on $X$ endowed with the topology of uniform convergence on bounded sets
of $\symdist$.
\begin{proposition}[\cite{ballmann_lectures}]
\label{prop:injective_continous}
The map $\distfn$ is injective and continuous.
\end{proposition}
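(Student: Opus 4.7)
The proof has three parts: first check that each $\distfn_z$ is actually continuous so it lands in $C(X)$, then verify continuity of the assignment $z \mapsto \distfn_z$, then verify injectivity. The main subtlety is that $\dist$ is not symmetric, so the usual reverse triangle inequality $|\dist(x,z)-\dist(x',z)| \le \dist(x,x')$ must be replaced by an estimate in terms of $\symdist$.

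For the first step, I would apply the triangle inequality in both directions: $\dist(x,z) \le \dist(x,x') + \dist(x',z)$ and $\dist(x',z) \le \dist(x',x) + \dist(x,z)$. Subtracting gives
\begin{align*}
|\dist(x,z)-\dist(x',z)| \le \max\bigl(\dist(x,x'),\dist(x',x)\bigr) \le \symdist(x,x'),
\end{align*}
so $\distfn_z$ is $1$-Lipschitz with respect to $\symdist$ and in particular lies in $C(X)$.

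For continuity of $\distfn:X \to C(X)$, the same two-sided triangle inequality applied to the $z$-variable yields $|\dist(x,z)-\dist(x,z')| \le \symdist(z,z')$ and $|\dist(b,z)-\dist(b,z')| \le \symdist(z,z')$, whence
\begin{align*}
\sup_{x\in X} |\distfn_z(x)-\distfn_{z'}(x)| \le 2\,\symdist(z,z').
\end{align*}
This bound is in fact uniform on all of $X$, so it a fortiori gives uniform convergence on $\symdist$-bounded sets; hence $z_n \to z$ in $(X,\symdist)$ implies $\distfn_{z_n} \to \distfn_z$ in $C(X)$.

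For injectivity, suppose $\distfn_y = \distfn_z$. Evaluating at $x=y$ gives $-\dist(b,y) = \dist(y,z)-\dist(b,z)$, i.e.\ $\dist(y,z) = \dist(b,z)-\dist(b,y)$; evaluating at $x=z$ gives $\dist(z,y) = \dist(b,y)-\dist(b,z)$. Adding these yields $\dist(y,z)+\dist(z,y) = 0$, and since each term is nonnegative we get $\dist(y,z) = \dist(z,y) = 0$, forcing $y=z$ by the metric axioms. There is no real obstacle here; the only point requiring care is consistently keeping both orientations of the triangle inequality when $\dist$ is asymmetric, which is why the bounds naturally come out in terms of $\symdist$.
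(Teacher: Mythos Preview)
Your proof is correct and follows essentially the same approach as the paper: the triangle inequality (applied in both orders to handle asymmetry) gives a uniform bound $|\distfn_z(x)-\distfn_{z'}(x)| \le C\,\symdist(z,z')$ for continuity, and evaluation at the points themselves handles injectivity. The paper's version is marginally terser---it gets the sharper constant $C=1$ rather than your $C=2$, and for injectivity it evaluates at a single point after relabeling so that $\dist(b,x)\ge\dist(b,y)$, whereas you evaluate at both $y$ and $z$ and add---but these are cosmetic differences, and your explicit check that each $\distfn_z$ is $1$-Lipschitz (hence in $C(X)$) is a worthwhile addition the paper omits.
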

\begin{proof}
The triangle inequality implies that
$\distfn_x(\cdot) - \distfn_y(\cdot) \le \dist(y,x) + \dist(x,y)$,
for all $x$ and $y$ in $X$. The continuity of $\distfn$ follows.

Let $x$ and $y$ be distinct points in $X$, and relabel them such that
$\dist(b,x)\ge\dist(b,y)$. We have
\begin{align*}
\distfn_y(x) - \distfn_x(x)
   &= \dist(x,y) - \dist(b,y) - \dist(x,x) + \dist(b,x) \\
   &\ge \dist(x,y),
\end{align*}
which shows that $\distfn_x$ and $\distfn_y$ are distinct.
\end{proof}
The horofunction boundary is defined to be
\begin{align*}
X(\infty):=\closure\{\distfn_z\mid z\in X\}\backslash\{\distfn_z\mid z\in X\},
\end{align*}
where $\closure{}$ denotes the closure of a set.
The elements of $X(\infty)$ are called horofunctions.\index{horofunction}
This definition first appeared, for the case of symmetric metrics,
in~\cite{gromov:hyperbolicmanifolds}.
For more information, see~\cite{ballmann_lectures}, \cite{rieffel_group},
and~\cite{AGW-m}.

One may check that if one changes to an alternative base-point $b'$, then the
new function assigned to a point $z$ is related to the old by
$\distfn'_z(\cdot) = \distfn_z(\cdot) - \distfn_z(b')$.
It follows that the horofunction boundary obtained using $b'$ is
homeomorphic to that obtained using $b$, and the horofunctions are related by
$\xi'(\cdot) = \xi(\cdot) - \xi(b')$.

Note that, if the metric $\symdist$ is proper,\index{proper metric space}
meaning that closed balls are compact, then uniform convergence on bounded sets
is equivalent to uniform convergence on compact sets.

The functions $\{\distfn_z\mid z\in X\}$ satisfy
$\distfn_z(x) \le \dist(x,y) + \distfn_z(y)$ for all $x$ and $y$.
Hence, for all horofunctions $\eta$,
\begin{align}
\label{eqn:ideal_tri_ineq}
\eta(x) \le d(x,y) + \eta(y),
\qquad\text{for all $x$ and $y$ in $X$}.
\end{align}
It follows that all elements of $\closure\{\distfn_z\mid z\in X\}$
are $1$-Lipschitz with respect to the metric $\symdist$.
We conclude that, for functions in this set, uniform convergence on compact
sets is equivalent to pointwise convergence.

Moreover, if $\symdist$ is proper, then the set
$\closure\{\distfn_z\mid z\in X\}$ is compact by the
Ascoli--Arzel\`a Theorem,\index{Ascoli-Arzela Theorem@Ascoli--Arzel\`a Theorem}
and we call it the \emph{horofunction compactification}.

The following assumptions will be useful.
They hold for the Lipschitz metric; see~\cite{papadopoulos_theret_topology}.
\begin{assumption}
\label{ass:proper}
The metric $\symdist$ is proper.
\end{assumption}
A geodesic\index{geodesic}
in a possibly non-symmetric metric space $(X,\dist)$ is a map
$\gamma$ from a closed interval of $\R$ to $X$ such that
\begin{align*}
\dist(\gamma(s),\gamma(t)) = t-s,
\end{align*}
for all $s$ and $t$ in the domain, with $s<t$.
\begin{assumption}
\label{ass:geodesic}
Between any pair of points in $X$, there exists a geodesic with respect to $d$.
\end{assumption}
\begin{assumption}
For any point $x$ and sequence $x_n$ in $X$, we have $\dist(x_n,x)\to 0$
if and only if $\dist(x,x_n) \to 0$.
\label{ass:topology}
\end{assumption}

\begin{proposition}[\cite{ballmann_lectures}]
\label{prop:embedding}
Assume~\ref{ass:proper}, \ref{ass:geodesic}, and~\ref{ass:topology} hold.
Then, $\distfn$ is an embedding of $X$ into $C(X)$, in other words, is a
homeomorphism from $X$ to its image.
\end{proposition}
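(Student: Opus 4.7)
The plan is to leverage the injectivity and continuity of $\distfn$ already established in Proposition~\ref{prop:injective_continous}, and to add to them the statement that $\distfn^{-1}$ is continuous on $\distfn(X)$. Concretely, I will prove that if $\distfn_{z_n}\to \distfn_z$ in $C(X)$ then $\symdist(z_n,z)\to 0$. Since $\symdist$ is proper (Assumption~\ref{ass:proper}), it is enough to show that every subsequence of $(z_n)$ contains a sub-subsequence that converges to $z$ in $\symdist$.

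Given such a subsequence $(z_{n_k})$, I split into two cases according to whether $\{z_{n_k}\}$ is $\symdist$-bounded. In the bounded case, properness furnishes a $\symdist$-convergent sub-subsequence $z_{n_{k_j}}\to w$; continuity of $\distfn$ then yields $\distfn_{z_{n_{k_j}}}\to \distfn_w$, and comparing with the hypothesis gives $\distfn_w=\distfn_z$, so injectivity forces $w=z$.

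In the unbounded case I aim for a contradiction. Passing to a further subsequence, I may assume either $d(b,z_{n_k})\to\infty$ or $d(z_{n_k},b)\to\infty$; the two situations are analogous (one runs the argument with geodesics in the opposite direction), so I focus on the first. Using Assumption~\ref{ass:geodesic}, pick geodesics $\gamma_k\colon[0,d(b,z_{n_k})]\to X$ from $b$ to $z_{n_k}$. For any fixed $T>d(b,z)$ and $k$ large, the geodesic property gives
\begin{align*}
\distfn_{z_{n_k}}(\gamma_k(T))
   = d(\gamma_k(T),z_{n_k}) - d(b,z_{n_k})
   = -T.
\end{align*}
After arguing that $\{\gamma_k(T)\}_k$ has $\symdist$-compact closure and passing to a further subsequence with $\gamma_k(T)\to p$, the uniform convergence $\distfn_{z_{n_k}}\to \distfn_z$ on $\symdist$-bounded sets together with $\symdist$-continuity of $\distfn_z$ forces $\distfn_z(p)=-T$. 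But $\distfn_z(p)=d(p,z)-d(b,z)\ge -d(b,z)$, contradicting $T>d(b,z)$.

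The main obstacle is the $\symdist$-precompactness of $\{\gamma_k(T)\}_k$ in the unbounded case. The parametrisation gives $d(b,\gamma_k(T))=T$ for free, but in an asymmetric setting the reverse distance $d(\gamma_k(T),b)$ is not a priori controlled; ruling out the possibility that the intermediate points escape in the reverse direction is precisely where Assumption~\ref{ass:topology} must carry its weight, and I expect this to be the most delicate verification in the argument. Once that compactness has been established, the rest is a routine limiting calculation.
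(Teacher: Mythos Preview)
Your overall strategy is close to the paper's, but there is a genuine gap at exactly the point you flag. You want the points $\gamma_k(T)$, chosen at $d$-parameter $T$ along geodesics from $b$ to $z_{n_k}$, to lie in a $\symdist$-bounded (hence $\symdist$-precompact) set. Knowing $d(b,\gamma_k(T))=T$ does not give this: Assumption~\ref{ass:topology} is a purely topological statement (the forward and backward $d$-topologies agree) and says nothing quantitative about $d(\gamma_k(T),b)$. Indeed, the paper later introduces Assumption~\ref{ass:infinite_dist} precisely to pass from ``$d(b,x_n)$ bounded'' to ``$\symdist(b,x_n)$ bounded'', and that assumption is \emph{not} among the hypotheses here. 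So as written your unbounded case cannot be closed. Your claim that the case $d(z_{n_k},b)\to\infty$ is ``analogous'' is also suspect: the functions $\distfn_z$ are built from $d(\,\cdot\,,z)$, not $d(z,\,\cdot\,)$, so reversing the direction of the geodesics does not produce a useful identity for $\distfn_{z_{n_k}}$ at the intermediate points.

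The paper uses Assumption~\ref{ass:topology} differently. Since $d(\gamma_n(s),\gamma_n(t))=t-s\to 0$ as $s\uparrow t$, Assumption~\ref{ass:topology} forces $d(\gamma_n(t),\gamma_n(s))\to 0$ as well; hence each geodesic $\gamma_n$ is continuous for $\symdist$, and $t\mapsto\symdist(y,\gamma_n(t))$ is a continuous real function taking the value $0$ at one endpoint and $\symdist(y,z_n)\to\infty$ at the other. By the intermediate value theorem one can choose, for any fixed $r>d(b,y)+\xi(y)$ (where $\xi=\lim\distfn_{z_n}$), a point $x_n=\gamma_n(t_n)$ with $\symdist(y,x_n)=r$. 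Now $\symdist$-precompactness of $\{x_n\}$ is automatic, and the limiting calculation you outline (using $\distfn_{z_n}(x_n)=\distfn_{z_n}(y)-d(y,x_n)$ and $1$-Lipschitzness in $\symdist$) gives $\xi(x)=\xi(y)-d(y,x)$ for the limit point $x$, hence $\distfn_y(x)-\xi(x)=r-d(b,y)-\xi(y)>0$. In short: pick the intermediate points at fixed $\symdist$-radius, not at fixed $d$-parameter; this is exactly what Assumption~\ref{ass:topology} buys you, and it sidesteps both your precompactness obstacle and your case split.
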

\begin{proof}
That $\distfn$ is injective and continuous was proved in
Proposition~\ref{prop:injective_continous}.

Let $z_n$ be a sequence in $X$ escaping to infinity, that is, 
eventually leaving and never returning to every compact set.
We wish to show that no subsequence of
$\distfn_{z_n}$ converges to a function $\distfn_{y}$ with $y\in X$.
Without loss of generality, assume that $\distfn_{z_n}$ converges
to $\xi\in\closure\{\distfn_z\mid z\in X\}$.

Since $\symdist$ is proper, $\symdist(y,z_n)$ must converge to
infinity. For each $n\in\N$, let $\gamma_n$ be a geodesic segment with
respect to $\dist$ from $y$ to $z_n$. Choose $r>\dist(b,y)+\xi(y)$.
It follows from assumption~\ref{ass:topology} that the
function $t\mapsto\symdist(y,\gamma_n(t))$ is continuous for each $n\in\N$.
Note that this function is defined on a closed interval and takes the
value $0$ at one endpoint and $\symdist(y,z_n)$ at the other.
Therefore, for $n$ large enough, we may find $t_n\in\Rplus$ such that
$\symdist(y,x_n)=r$, where $x_n:=\gamma_n(t_n)$.

Since $\symdist$ is proper, and all the $x_n$ lie in a closed ball of radius
$r$, we may assume, by taking a subsequence if necessary, that $x_n$ converges
to some point $x\in X$.

Observe that
$\distfn_{z_n}(x_n) = \distfn_{z_n}(y) - \dist(y, x_n)$,
for all $n\in\N$.
Since the $\distfn_{z_n}$ are $1$-Lipschitz with respect to $\symdist$, we may
take limits and get $\xi(x) = \xi(y) - \dist(y,x)$.
On the other hand, $\distfn_{y}(x) = \dist(x,y)-\dist(b,y)$.
So $\distfn_{y}(x) - \xi(x) = r - \dist(b,y) - \xi(y) > 0$.
This shows that $\xi$ is distinct from $\distfn_{y}$.

Now let $p_n$ be a sequence in $X$ such that $\distfn_{p_n}$ converges
to $\distfn_{p}$ in $\distfn(X)$. From what we have just shown,
$p_n$ cannot have any subsequence escaping to infinity.
Therefore $p_n$ is bounded in the $\symdist$ metric.
It then follows from the compactness of closed balls and the continuity and
injectivity of $\distfn$ that $p_n$ converges to $p$.
\end{proof}

We henceforth identify $X$ with its image.

\begin{proposition}
\label{prop:to_infinity}
Assume~\ref{ass:proper}, \ref{ass:geodesic}, and~\ref{ass:topology} hold.
Let $x_n$ be a sequence in $X$ converging to a horofunction.
Then, only finitely many points of $x_n$ lie in any closed ball of
$\symdist$.
\end{proposition}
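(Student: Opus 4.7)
The plan is to argue by contradiction, using the fact that the horofunction boundary is defined to be disjoint from $\distfn(X)$, together with the compactness afforded by Assumption~\ref{ass:proper} and the continuity of $\distfn$.

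Suppose, contrary to the conclusion, that some closed $\symdist$-ball $\overline{B}$ contains a subsequence $x_{n_k}$. First I would invoke Assumption~\ref{ass:proper} to conclude that $\overline{B}$ is compact in the $\symdist$-topology, so by passing to a further subsequence I may assume $x_{n_k}\to y$ for some $y\in X$. Next I would apply the continuity half of Proposition~\ref{prop:injective_continous} to obtain $\distfn_{x_{n_k}}\to \distfn_y$ in $C(X)$.

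On the other hand, the hypothesis is that the full sequence $\distfn_{x_n}$ converges in $C(X)$ to some horofunction $\xi$, which by the very definition of $X(\infty)=\closure\{\distfn_z\mid z\in X\}\setminus\{\distfn_z\mid z\in X\}$ does not belong to $\distfn(X)$. Since $C(X)$ with its topology of uniform convergence on bounded sets is Hausdorff, limits of convergent subsequences agree with the limit of the full sequence, so $\xi=\distfn_y\in\distfn(X)$, contradicting $\xi\in X(\infty)$.

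I do not anticipate any real obstacle: the argument is a straightforward combination of properness, continuity of $\distfn$, and the set-theoretic definition of the horofunction boundary. The only place that requires a moment of care is ensuring that the convergence $x_{n_k}\to y$ in the symmetrised metric (which, by Assumption~\ref{ass:topology}, is the same as convergence in either of the one-sided senses) is indeed what triggers convergence $\distfn_{x_{n_k}}\to\distfn_y$ in $C(X)$; this is precisely the continuity statement in Proposition~\ref{prop:injective_continous}, so Assumptions~\ref{ass:geodesic} need not be used here.
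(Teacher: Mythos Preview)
Your argument is correct and is essentially the same as the paper's: assume a subsequence stays in a closed $\symdist$-ball, extract a convergent subsequence by properness, push it through the continuity of $\distfn$, and contradict the definition of $X(\infty)$. The only difference is cosmetic: the paper phrases it as a contrapositive and cites Proposition~\ref{prop:embedding}, whereas you cite the weaker Proposition~\ref{prop:injective_continous}; your observation that only continuity is needed (and hence that Assumption~\ref{ass:geodesic} is not actually used) is valid.
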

\begin{proof}
Suppose $x_n$ is a sequence in $X$ such that some subsequence of
$\symdist(b,x_n)$ is bounded.
By taking a further subsequence if necessary, we may assume that
$x_n$ converges to a point $x$ in $X$. By Proposition~\ref{prop:embedding},
$\distfn_{x_n}$ converges to $\distfn_{x}$, and so $x_n$ does not converge
to a horofunction.
\end{proof}

A path $\gamma\colon\Rplus\to X$
is called an \emph{almost-geodesic} if, for each $\epsilon>0$,\index{almost-geodesic}
\begin{equation*}
|\dist(\gamma(0),\gamma(s))+\dist(\gamma(s),\gamma(t))-t|<\epsilon,
\text{\quad for $s$ and $t$ large enough, with $s\le t$}.
\end{equation*}
Rieffel~\cite{rieffel_group} proved that
every almost-geodesic converges to a limit in $X(\infty)$.
A horofunction is called a \emph{Busemann point}\index{Busemann point}
if there exists an almost-geodesic converging to it.
We denote by $X_B(\infty)$ the set of all Busemann points in $X(\infty)$.

Isometries between possibly non-symmetric metric spaces extend continuously
to homeomorphisms between the horofunction compactifications.
\begin{proposition}
\label{prop:transform_horo}
Assume that $\iso$ is an isometry from one possibly non-symmetric metric
space $(X,d)$ to another $(X',d')$, with base-points $b$ and $b'$,
respectively. Then, for every horofunction $\xi$ and point $x\in X$,
\begin{align*}
\iso\cdot\xi(x) = \xi(\iso^{-1}(x)) - \xi(\iso^{-1}(b')),
\end{align*}
\end{proposition}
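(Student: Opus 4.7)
The plan is to treat the continuous extension of $\iso$ to the horofunction compactifications as the pushforward of sequences and then compute the resulting limit explicitly. First I would pick a horofunction $\xi\in X(\infty)$ together with a sequence $z_n\in X$ such that $\distfn_{z_n}\to\xi$ in $C(X)$. Since $\iso$ is an isometry of the symmetrised metrics, and since $\symdist(b,z_n)\to\infty$ by Proposition~\ref{prop:to_infinity}, the image sequence $\iso(z_n)$ escapes to infinity in $(X',\symdist')$. So $\distfn'_{\iso(z_n)}$ will have a subsequential limit in $C(X')$, which I would take as the definition of $\iso\cdot\xi$.

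The key computation is to evaluate this limit at an arbitrary point $x\in X'$. I would unfold $\distfn'_{\iso(z_n)}(x) = \dist'(x,\iso(z_n)) - \dist'(b',\iso(z_n))$, use the isometry property of $\iso$ to rewrite both $\dist'$-distances as $\dist$-distances in $X$, and insert $\pm\,\dist(b,z_n)$ to recognise the result as $\distfn_{z_n}(\iso^{-1}(x)) - \distfn_{z_n}(\iso^{-1}(b'))$. Letting $n\to\infty$ then produces the claimed formula.

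I do not anticipate any genuine obstacle; the proposition is essentially a formal consequence of the isometry property combined with the definition of $\distfn$. The only points requiring a brief remark are that the limit is independent of the approximating sequence $z_n$, which is automatic since the closed-form answer depends only on $\xi$, and that the map so defined is the unique continuous extension of $\iso$ to the compactification, which follows from continuity of $\xi\mapsto\xi\after\iso^{-1} - \xi(\iso^{-1}(b'))$ in the topology of uniform convergence on bounded sets of $\symdist$, together with Proposition~\ref{prop:embedding}.
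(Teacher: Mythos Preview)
Your proposal is correct and follows essentially the same route as the paper: pick a sequence $z_n\to\xi$, rewrite $\distfn'_{\iso(z_n)}(x)=d'(x,\iso(z_n))-d'(b',\iso(z_n))$ via the isometry, insert $\pm\,d(b,z_n)$ to obtain $\distfn_{z_n}(\iso^{-1}(x))-\distfn_{z_n}(\iso^{-1}(b'))$, and pass to the limit. The paper's proof is just this three-line computation, without your additional care about escape to infinity or subsequential limits; your observation that the closed-form answer depends only on $\xi$ already shows the full sequence converges, so the appeal to Proposition~\ref{prop:to_infinity} (and hence to assumptions~\ref{ass:proper}--\ref{ass:topology}) is not actually needed.
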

\begin{proof}
Let $x_n$ be a sequence in $X$ converging to $\xi$.
We have
\begin{align*}
\iso\cdot\xi(x)
   &= \lim_{n\to\infty} d'(x,\iso(x_n)) - d'(b',\iso(x_n)) \\
   &= \lim_{n\to\infty} \Big(d(\iso^{-1}(x),x_n) - d(b,x_n)\Big)
                    + \Big(d(b,x_n) - d(\iso^{-1}(b'),x_n)\Big) \\
   &= \xi(\iso^{-1}(x)) - \xi(\iso^{-1}(b')).
\qedhere
\end{align*}
\end{proof}

\section{The horoboundary of Thurston's Lipschitz metric}
\label{sec:thurston_horo}

We start with a general lemma relating joint continuity\index{joint continuity}
to uniform convergence on compact sets.

\begin{lemma}
\label{lem:uniform_inter}
Let $X$ and $Y$ be two topological spaces
and let $i:X\times Y\to \R$ be a continuous function.
Let $x_n$ be a sequence in $X$ converging to $x\in X$.
Then, $i(x_n,\cdot)$ converges to
$i(x,\cdot)$ uniformly on compact sets of $Y$.
\end{lemma}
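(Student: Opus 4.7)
The plan is a standard tube-lemma-style argument: exploit joint continuity of $i$ at each point of the compact slice $\{x\}\times K$ to obtain local estimates, then paste these together into a uniform estimate using a finite subcover of $K$.

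First, fix a compact set $K\subset Y$ and $\varepsilon>0$. For each $y\in K$, joint continuity of $i$ at $(x,y)$ furnishes open neighborhoods $U_y\ni x$ in $X$ and $V_y\ni y$ in $Y$ such that
\begin{align*}
|i(x',y') - i(x,y)| < \varepsilon/2 \qquad \text{for every } (x',y') \in U_y\times V_y.
\end{align*}
The family $\{V_y\}_{y\in K}$ is an open cover of $K$, so by compactness there is a finite subcover $V_{y_1},\dots,V_{y_m}$. Set $U := U_{y_1}\cap\cdots\cap U_{y_m}$, which is an open neighborhood of $x$ in $X$.

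Second, for any $y'\in K$ choose $j$ with $y'\in V_{y_j}$. Then $|i(x,y')-i(x,y_j)|<\varepsilon/2$, and for any $x'\in U$ also $|i(x',y')-i(x,y_j)|<\varepsilon/2$. The triangle inequality yields $|i(x',y') - i(x,y')| < \varepsilon$ for every $(x',y')\in U\times K$.

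Finally, since $x_n\to x$, eventually $x_n\in U$, so $\sup_{y'\in K}|i(x_n,y')-i(x,y')|<\varepsilon$ for all sufficiently large $n$, giving uniform convergence of $i(x_n,\cdot)$ to $i(x,\cdot)$ on $K$. The only subtle point—and the reason the result is nontrivial even though it feels obvious—is that one must use \emph{joint} continuity rather than separate continuity in order to control $i(x',y')$ simultaneously as $(x',y')$ varies; this is precisely what makes the finite-subcover step legal. Since the statement is for arbitrary topological spaces, I would avoid any sequential subsequence extraction on $K$ and stick with the open-cover formulation above.
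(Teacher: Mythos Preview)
Your proof is correct and follows essentially the same tube-lemma argument as the paper: use joint continuity to get box neighborhoods $U_y\times V_y$, extract a finite subcover of $K$ by the $V_y$'s, and intersect the corresponding $U_y$'s. Your version is in fact slightly more careful, using $\varepsilon/2$ and an explicit triangle-inequality step where the paper simply asserts the final estimate.
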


\begin{proof}
Take any $\epsilon>0$, and let $K$ be a compact subset of $Y$.
The function $i(\cdot,\cdot)$ is continuous, and so,
for any $y\in Y$, there exists an open neighbourhood
$U_y\subset X$ of $x$ and an open neighbourhood $V_y\subset Y$ of $y$ such that
$|i(x',y') - i(x,y)| < \epsilon$ for all $x'\in U_y$ and $y'\in V_y$.
Since $K$ is covered by $\{V_y \mid y\in K\}$, there exists a finite
sub-covering $\{V_{y_1},\dots,V_{y_n}\}$. Define $U:=\bigcap_{i} U_{y_i}$.
This is an open neighbourhood of $x$, and $|i(x',y) - i(x,y)| < \epsilon$
for all $y\in K$ and $x'\in U$.
\end{proof}

We use Bonahon's theory of geodesic currents~\cite{bonahon_currents}.\index{geodesic current}
The space of geodesic currents is a completion of the space of
homotopy classes of curves on $S$, not necessarily simple, equipped with
positive weights.

More formally, let $G$ be the space of geodesics on the universal cover\index{universal cover}
of $S$, endowed with the compact-open topology.
A geodesic current is a positive measure on $G$ that is invariant under the
action of the fundamental group of $S$.

It is convenient to work with the space of geodesic currents
because both Teichm\"uller space $\teichmullerspace$ and the space
$\measuredlams$ of compactly supported measured geodesic
laminations are embedded into it in a very natural way.\index{measured laminations}
Furthermore, there is a continuous symmetric bilinear form $i(x,y)$
on this space that restricts to the usual intersection form when
$x$ and $y$ are in $\measuredlams$, and takes the value $i(x,y)=\hlength_x(y)$
when $x\in\teichmullerspace$ and $y\in\measuredlams$.

We denote by $\projmeasuredlams$ the projective space of $\measuredlams$,\index{projective measured laminations}
that is, the quotient of $\measuredlams$ by the multiplicative action
of the positive real numbers.
We use $[\mu]$ to denote the equivalence class of $\mu\in\measuredlams$
in $\projmeasuredlams$.
We may identify $\projmeasuredlams$ with the cross-section
$\unitlams:=\{\mu\in\measuredlams \mid \hlength_b(\mu)=1\}$.
We have the following two formulas for the Lipschitz metric.
\begin{align}
\label{eqn:dist_formula}
\stretchdist(x,y)
   = \log \sup_{\eta\in\measuredlams}
        \frac{\hlength_y(\eta)}{\hlength_x(\eta)}
   = \log \sup_{\eta\in\unitlams}
        \frac{\hlength_y(\eta)}{\hlength_x(\eta)},
\end{align}
The second is very useful because the supremum is taken over a compact set,
and is therefore attained.

Recall that we have chosen a base-point $b$ in $\teichmullerspace$.
Define, for any geodesic current $x$,
\begin{align*}
\lfactor(x) &
   := \sup_{\eta\in\measuredlams}\frac{\bonint(x,\eta)}{\hlength_b(\eta)}
    = \sup_{\eta\in\unitlams}\frac{\bonint(x,\eta)}{\hlength_b(\eta)}
\end{align*}
and
\begin{align*}
\lengthfunc_x & :\measuredlams \to \Rplus:
                      \mu \mapsto \frac{\bonint(x,\mu)}{\lfactor(x)}.
\end{align*}

Let $\thurstoncompact:=\teichmullerspace\union\projmeasuredlams$
be the Thurston compactification of Teichm\"uller space.\index{Thurston compactification}

Identify $\projmeasuredlams$ with $\unitlams$, and consider a sequence
$x_n$ in $\thurstoncompact$. Then, $x_n$ converges to a point $x$
in the Thurston
compactification if and only if there is a sequence $\lambda_n$ of positive
real numbers such that $\lambda_n x_n$ converges to $x$ as a geodesic current.
One can take $\lambda_n$ to be identically $1$ if $x\in\teichmullerspace$.

\begin{proposition}
\label{prop:uniform_lengths}
A sequence $x_n$ in $\thurstoncompact$ converges to a point
$x\in\thurstoncompact$ if and only if $\lengthfunc_{x_n}$ converges to
$\lengthfunc_x$ uniformly on compact sets of $\measuredlams$.
\end{proposition}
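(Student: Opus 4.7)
The plan is to prove both implications via Bonahon's joint continuity of the intersection form, the compactness of the cross-section $\unitlams$, and the injectivity of the Thurston embedding on $\thurstoncompact$. For the forward direction, suppose $x_n \to x$ in $\thurstoncompact$. By the description recalled just above the statement, there exist $\lambda_n > 0$ with $\lambda_n x_n \to x$ in the space of geodesic currents. Since $\bonint$ is bilinear and $\lfactor$ is positively homogeneous of degree one, the normalised function $\lengthfunc_z$ depends only on the ray $\Rplus z$, so $\lengthfunc_{x_n} = \lengthfunc_{\lambda_n x_n}$, and one may assume outright that $x_n \to x$ as currents. Bonahon's joint continuity of $\bonint$ combined with Lemma~\ref{lem:uniform_inter} then gives $\bonint(x_n,\cdot) \to \bonint(x,\cdot)$ uniformly on each compact subset of $\measuredlams$. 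Applied to the compact cross-section $\unitlams$, this yields $\lfactor(x_n) \to \lfactor(x)$, and $\lfactor(x) > 0$ because every point of $\thurstoncompact$ has positive intersection with at least one element of $\unitlams$. Dividing then gives $\lengthfunc_{x_n} \to \lengthfunc_x$ uniformly on compact subsets of $\measuredlams$.

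For the converse, suppose $\lengthfunc_{x_n} \to \lengthfunc_x$ uniformly on compact sets. By compactness of $\thurstoncompact$, every subsequence of $(x_n)$ admits a further subsequence converging to some $y \in \thurstoncompact$; the forward direction then forces $\lengthfunc_y = \lengthfunc_x$. The proposition will therefore follow once one knows $y = x$, that is, once one has injectivity of the map $z \mapsto \lengthfunc_z$ on $\thurstoncompact$.

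This injectivity is essentially the Thurston embedding theorem in Bonahon's currents formulation: the projective class $[\bonint(z,\cdot)]$ of intersection functions on $\measuredlams$ separates points of $\thurstoncompact$, and $\lengthfunc_z$ is a canonical representative of that projective class. Concretely, for $x,y \in \teichmullerspace$, equality of projective length functions forces $x = y$ by marked length spectrum rigidity of hyperbolic surfaces; for $x,y \in \projmeasuredlams$ one invokes the analogous rigidity for measured laminations; and a Teichm\"uller point cannot share $\lengthfunc$ with a projective measured lamination $[\mu]$, since $\lengthfunc_{[\mu]}(\mu) = 0$ while $\lengthfunc_z(\mu) > 0$ for every $z \in \teichmullerspace$ and every non-zero $\mu$. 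I expect this injectivity step to be the only substantive external input; the rest of the argument is a routine combination of continuity and compactness, together with the ray-invariance of $\lengthfunc$ that let me pass from convergence in $\thurstoncompact$ to convergence as currents.
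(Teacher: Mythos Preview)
Your proof is correct and follows essentially the same route as the paper: both directions use Lemma~\ref{lem:uniform_inter} and the compactness of $\unitlams$ for the forward implication, and for the converse both pass to a convergent subsequence, apply the forward direction, and then invoke injectivity of $z\mapsto[\bonint(z,\cdot)]$ on $\thurstoncompact$. The only cosmetic difference is that the paper packages the injectivity step as the single known fact that proportional intersection functions determine the same point of $\thurstoncompact$, whereas you unpack it into the three cases (two Teichm\"uller points, two laminations, one of each).
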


\begin{proof}
Assume that $x_n$ converges in the Thurston compactification
to a point $x\in\thurstoncompact$.
This implies that, for some sequence $\lambda_n$ of positive real
numbers, $\lambda_n x_n$ converges to $x$ as a geodesic current.
We now apply Lemma~\ref{lem:uniform_inter} to Bonahon's intersection function
to get that $\bonint(\lambda_n x_n,\cdot)$ converges uniformly on compact sets
of $\measuredlams$ to $\bonint(x,\cdot)$.
Therefore, since $\unitlams$ is compact,
$\lfactor(\lambda_n x_n)$ converges to $\lfactor(x)$,
which is a positive real number. So,
$\lengthfunc_{x_n}(\cdot)=\bonint(\lambda_n x_n,\cdot)/\lfactor(\lambda_n x_n)$
converges to $\lengthfunc_x(\cdot)$ uniformly on compact sets of
$\measuredlams$.

Now assume that $\lengthfunc_{x_n}$ converges to
$\lengthfunc_x$ uniformly on compact sets of $\measuredlams$.
Let $y_n$ be a subsequence of $x_n$ converging in $\thurstoncompact$
to a point $y$. As before, we have that
$\lengthfunc_{y_n}$ converges to $\lengthfunc_y$
uniformly on compact sets of $\measuredlams$.
Combining this with our assumption, we get that $\lengthfunc_y$
and $\lengthfunc_x$ agree.
Therefore, $\bonint(y,\cdot)=\lambda\bonint(x,\cdot)$ for some $\lambda>0$.
It follows that $x$ and $y$ are the same point in
$\thurstoncompact$.
We have shown that every convergent subsequence of $x_n$
converges to $x$, which implies that $x_n$ converges to $x$.
\end{proof}

For each $z\in\thurstoncompact$, define the map
\begin{align*}
\Psi_z(x) := \log \sup_{\eta\in\measuredlams}
                 \frac{\lengthfunc_z(\eta)}{\hlength_x(\eta)},
\qquad\text{for all $x$ in $\teichmullerspace$.}
\end{align*}
Note that, if $z\in\teichmullerspace$, then
$\Psi_z(x)=\stretchdist(x,z)-\stretchdist(b,z)$ for all $x\in\teichmullerspace$.

For $x\in\teichmullerspace$ and $y\in\teichmullerspace\union\unitlams$,
let $\maxset_{xy}$ be the set of elements $\eta$ of $\projmeasuredlams\,
({} =\unitlams)$ where $\lengthfunc_y(\eta) / \lengthfunc_x(\eta)$ is maximal.
\begin{lemma}
\label{lem:optimal_dir}
Let $x_n$ be a sequence of points in $\teichmullerspace$ converging to a
point $[\mu]$ in the Thurston boundary.
Let $y$ be a point in $\thurstoncompact$ satisfying $i(y,\mu)\neq 0$,
and let $\nu_n$ be a sequence
in $\unitlams$ such that $\nu_n\in\maxset_{{x_n}y}$ for all $n\in\N$.
Then, any limit point $\nu\in\unitlams$ of $\nu_n$ satisfies $i(\mu,\nu)=0$.
\end{lemma}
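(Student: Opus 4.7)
The plan is to take the measured lamination $\mu$ itself, normalised to lie in the cross-section $\unitlams$, as a test lamination and to exploit the defining maximality property of $\nu_n$. The key observation is that $i(\mu,\mu)=0$ for any measured geodesic lamination, so that $\mu$ is ``invisible'' to itself, whereas the hypothesis $i(y,\mu)\neq 0$ ensures it is not invisible to $y$.

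Concretely, I would first apply Proposition~\ref{prop:uniform_lengths}: since $x_n \to [\mu]$ in $\thurstoncompact$, the functions $\lengthfunc_{x_n}$ converge to $\lengthfunc_\mu$ uniformly on the compact set $\unitlams$. Evaluating at the point $\mu \in \unitlams$ gives
\begin{align*}
\lengthfunc_{x_n}(\mu) \longrightarrow \lengthfunc_\mu(\mu) = \frac{i(\mu,\mu)}{\lfactor(\mu)} = 0,
\end{align*}
while $\lengthfunc_y(\mu) = i(y,\mu)/\lfactor(y)$ is a fixed positive constant by hypothesis.

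Next, using $\nu_n \in \maxset_{x_n y}$,
\begin{align*}
\frac{\lengthfunc_y(\nu_n)}{\lengthfunc_{x_n}(\nu_n)} \;\geq\; \frac{\lengthfunc_y(\mu)}{\lengthfunc_{x_n}(\mu)} \;\longrightarrow\; +\infty.
\end{align*}
I would then pass to any subsequence along which $\nu_n \to \nu$ in the compact set $\unitlams$. The numerator $\lengthfunc_y(\nu_n)$ stays bounded by continuity of $\lengthfunc_y$ and compactness of $\unitlams$, forcing $\lengthfunc_{x_n}(\nu_n) \to 0$ along the subsequence. Uniform convergence of $\lengthfunc_{x_n}$ to $\lengthfunc_\mu$ on $\unitlams$, together with $\nu_n \to \nu$, yields $\lengthfunc_{x_n}(\nu_n) \to \lengthfunc_\mu(\nu)$. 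Hence $\lengthfunc_\mu(\nu) = 0$, that is, $i(\mu,\nu)=0$.

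The only point requiring even minor care is verifying that $\lfactor(\mu)$ is finite and strictly positive, so that $\lengthfunc_\mu$ makes sense and is not identically zero; this is immediate from compactness of $\unitlams$, continuity of $i$, and the existence of a simple closed curve meeting $\mu$ transversely. The substantive idea — using $\mu$ itself as the test lamination and exploiting $i(\mu,\mu)=0$ — is really what drives the argument; once that is in hand, everything else is continuity and compactness.
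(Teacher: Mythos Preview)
Your argument is correct and follows essentially the same approach as the paper's proof: both use Proposition~\ref{prop:uniform_lengths} to obtain uniform convergence of $\lengthfunc_{x_n}$ to $\lengthfunc_\mu$, test against $\mu$ (or a sequence tending to $\mu$) to force $\sup_{\unitlams} \lengthfunc_y/\lengthfunc_{x_n}\to+\infty$, and then deduce that any limit $\nu$ of maximisers must satisfy $\lengthfunc_\mu(\nu)=0$. Your version is, if anything, slightly more streamlined, since you test directly at the normalised $\mu$ and argue via boundedness of $\lengthfunc_y$ rather than phrasing things in terms of the limiting behaviour of $F_n(\eta_n)$ along arbitrary convergent sequences.
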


\begin{proof}
Consider the sequence of functions
$F_n(\eta):=\lengthfunc_y(\eta)/\lengthfunc_{x_n}(\eta)$.
By Proposition~\ref{prop:uniform_lengths}, $\lengthfunc_{x_n}$ converges
to $\lengthfunc_\mu$ uniformly on compact sets.
Therefore, for any sequence $\eta_n$ in $\unitlams$ converging to a
limit $\eta$, we have that $F_n(\eta_n)$ converges to
$\lengthfunc_y(\eta)/\lengthfunc_{\mu}(\eta)$
provided $\lengthfunc_y(\eta)$ and $\lengthfunc_{\mu}(\eta)$ are not both zero.
So, by evaluating on a sequence $\eta_n$ converging to $\mu$,
we see that $\sup_{\unitlams} F_n$ converges to $+\infty$.
On the other hand, for any sequence $\eta_n$ converging to some $\eta$
satisfying $i(\mu,\eta)>0$, we get that $F_n(\eta_n)$ converges to something
finite, and so $\eta_n\not\in\maxset_{{x_n}y}$ for $n$ large enough.
The conclusion follows.
\end{proof}

A measured lamination is \emph{maximal} if its support is not
properly\index{maximal measured lamination}
contained in the support of any other measured lamination.
It is \emph{uniquely-ergodic} if every measured lamination with the
same\index{uniquely-ergodic measured lamination}
support is in the same projective class.
Recall that, if $\mu$ is maximal and uniquely-ergodic,
and $\eta\in\measuredlams$ satisfies $i(\mu,\eta)=0$,
then $\mu$ and $\eta$ are proportional~\cite[Lemma~2.1]{diaz_series_lines}.

\begin{lemma}
\label{lem:injective}
The map $\Psi:\thurstoncompact\to C(\teichmullerspace):z\mapsto\Psi_z$
is injective.
\end{lemma}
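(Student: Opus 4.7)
The plan is to reduce injectivity of $\Psi$ to equality of the associated length functions $\lengthfunc_z$ on $\measuredlams$, and then to equality of the underlying points in $\thurstoncompact$ via Bonahon's theory of currents. First observe that adding the $z$-independent quantity $\stretchdist(b,x)=\log\lfactor(x)$ to $\Psi_z(x)$ yields
\[
B_z(x) := e^{\Psi_z(x)+\stretchdist(b,x)} = \sup_{\eta\in\unitlams}\frac{\lengthfunc_z(\eta)}{\lengthfunc_x(\eta)},
\]
whose supremum over the compact set $\unitlams$ is attained. Hence $\Psi_z=\Psi_{z'}$ on $\teichmullerspace$ is equivalent to $B_z=B_{z'}$ on $\teichmullerspace$.

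To recover $\lengthfunc_z$ pointwise from $B_z$ I would test against sequences aimed at generic Thurston boundary points. Fix a maximal uniquely-ergodic $\eta^*\in\unitlams$; if one of $z,z'$ equals a boundary point $[\mu]$, further require $[\eta^*]\neq[\mu]$, which is possible since such $\eta^*$ are dense. Choose $x_n\in\teichmullerspace$ with $x_n\to[\eta^*]$ in the Thurston compactification, and let $\eta_n,\eta_n'\in\unitlams$ attain the suprema defining $B_z(x_n)$ and $B_{z'}(x_n)$. The choice of $\eta^*$ forces $\bonint(\eta^*,z)>0$ and $\bonint(\eta^*,z')>0$, so Lemma~\ref{lem:optimal_dir} applies: every accumulation point of $(\eta_n)$ and of $(\eta_n')$ in the compact space $\unitlams$ satisfies $\bonint(\eta^*,\cdot)=0$, and the recalled property of maximal uniquely-ergodic laminations forces this accumulation set to be the singleton $\{\eta^*\}$. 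Using the maximality of $\eta_n$ and $\eta_n'$ to compare cross-terms, together with $B_z(x_n)=B_{z'}(x_n)$, I obtain the two-sided inequality
\[
\frac{\lengthfunc_z(\eta_n')}{\lengthfunc_{z'}(\eta_n')} \leq 1 \leq \frac{\lengthfunc_z(\eta_n)}{\lengthfunc_{z'}(\eta_n)}.
\]
Passing to the limit, using continuity of $\lengthfunc_z,\lengthfunc_{z'}$ together with $\lengthfunc_{z'}(\eta^*),\lengthfunc_z(\eta^*)>0$, gives $\lengthfunc_z(\eta^*)=\lengthfunc_{z'}(\eta^*)$. By density of the maximal uniquely-ergodic laminations in $\unitlams$ and continuity, $\lengthfunc_z=\lengthfunc_{z'}$ on all of $\measuredlams$, so the intersection functions $\bonint(z,\cdot)$ and $\bonint(z',\cdot)$ are proportional on $\measuredlams$.

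A short case analysis finally converts proportionality of intersection functions into equality of points in $\thurstoncompact$: if $z,z'\in\teichmullerspace$, the fixed self-intersection value of hyperbolic structures pins the proportionality constant at $1$ and gives $z=z'$; if $z\in\teichmullerspace$ and $z'=[\mu]\in\projmeasuredlams$, proportionality is incompatible with $\bonint(\mu,\mu)=0<\bonint(\mu,z)=\hlength_z(\mu)$; and if both $z,z'$ lie in $\projmeasuredlams$, proportionality is exactly projective equality. The principal technical step is the application of Lemma~\ref{lem:optimal_dir} in the case where $z$ itself is a boundary point and the verification that both maximizing sequences $\eta_n,\eta_n'$ converge to the common limit $\eta^*$, which is what underpins the decisive two-sided inequality.
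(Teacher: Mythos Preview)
Your argument is correct. Both your proof and the paper's rest on the same two ingredients---Lemma~\ref{lem:optimal_dir}, which localises the maximiser of $\lengthfunc_z/\lengthfunc_{x_n}$ near a chosen maximal uniquely-ergodic point, and the density of such points in $\unitlams$---so the underlying strategy is the same. The organisation differs, however. The paper argues directly: given distinct $z,z'$, it first invokes Proposition~\ref{prop:uniform_lengths} to obtain $\lengthfunc_z\neq\lengthfunc_{z'}$, picks a neighbourhood $N$ on which $\lengthfunc_z<u<v<\lengthfunc_{z'}$, and then uses Lemma~\ref{lem:optimal_dir} to produce a \emph{single} point $p\in\teichmullerspace$ at which the supremum defining $\Psi_z(p)$ is attained inside $N$, yielding the strict inequality $\Psi_z(p)<\Psi_{z'}(p)$. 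You instead run the contrapositive: assuming $\Psi_z=\Psi_{z'}$, you use Lemma~\ref{lem:optimal_dir} along a sequence $x_n\to[\eta^*]$ to force both maximisers $\eta_n,\eta_n'$ to the common limit $\eta^*$, and a sandwich between the two optimality inequalities gives $\lengthfunc_z(\eta^*)=\lengthfunc_{z'}(\eta^*)$; density then yields $\lengthfunc_z\equiv\lengthfunc_{z'}$, and you finish with Bonahon's self-intersection constant to pin down the interior--interior case. Your route has the pleasant feature of actually \emph{recovering} the length function $\lengthfunc_z$ from the horofunction $\Psi_z$, at the cost of a limiting argument and an explicit appeal to Bonahon's formula $i(x,x)=\text{const}$ for $x\in\teichmullerspace$ (which the paper hides inside Proposition~\ref{prop:uniform_lengths}). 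One small point of care: when both $z,z'$ lie on the boundary you must choose $\eta^*$ distinct from \emph{both} projective classes, not just one, so that Lemma~\ref{lem:optimal_dir} applies to each; this is still possible by density.
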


\begin{proof}
Let $x$ and $y$ be distinct elements of $\thurstoncompact$.
By Proposition~\ref{prop:uniform_lengths}, $\lengthfunc_x$ and
$\lengthfunc_y$ are distinct.
So, by exchanging $x$ and $y$ if necessary, we have
$\lengthfunc_x(\mu)<\lengthfunc_y(\mu)$ for some $\mu\in\unitlams$.
Since $\lengthfunc_x$ and $\lengthfunc_y$ are continuous, we may choose
an open neighbourhood $N$ of $\mu$ in $\unitlams$ small enough that there are
real numbers $u$ and $v$ such that
\begin{align*}
\lengthfunc_x(\eta) \le u < v \le \lengthfunc_y(\eta),
\qquad\text{for all $\eta\in N$}.
\end{align*}

Since the set of maximal uniquely-ergodic measured laminations is dense in
$\unitlams$, we can find such a measured lamination $\mu'$ in $N$
that is not proportional to $x$.
Let $p_n$ be a sequence of points in $\teichmullerspace$ converging to
$[\mu']$ in the Thurston boundary, and let $\nu_n$ be a sequence in
$\unitlams$ such that $\nu_n\in\maxset_{{p_n}x}$ for all $n\in\N$.

Since $\mu'$ is maximal and uniquely ergodic,
$i(\mu',\eta)\neq 0$ for all
$\eta\in\measuredlams$ not proportional to $\mu'$.
So $i(\mu',x)\neq 0$, whether $x$ is in $\teichmullerspace$ or in the
Thurston boundary.

By Lemma~\ref{lem:optimal_dir}, any limit point $\nu\in\unitlams$
of $\nu_n$ satisfies $i(\mu',\nu)=0$, and hence equals $\mu'$.
So, $\nu_n\in N$ for large $n$.
Therefore, by taking $n$ large enough, we can find a point $p$ in
$\teichmullerspace$ such that the supremum of
$\lengthfunc_x(\cdot)/\hlength_p(\cdot)$ is attained in the set $N$.

Putting all this together, we have
\begin{align*}
\sup_{\unitlams}\frac{\lengthfunc_x(\cdot)}{\hlength_p(\cdot)}
  & = \sup_{N}\frac{\lengthfunc_x(\cdot)}{\hlength_p(\cdot)}
   \le \sup_{N}\frac{u}{\hlength_p(\cdot)} \\
  & < \sup_{N}\frac{v}{\hlength_p(\cdot)}
   \le \sup_{N}\frac{\lengthfunc_y(\cdot)}{\hlength_p(\cdot)}
   \le \sup_{\unitlams}\frac{\lengthfunc_y(\cdot)}{\hlength_p(\cdot)}.
\end{align*}
Thus, $\Psi_x(p)<\Psi_y(p)$, which implies that $\Psi_x$ and $\Psi_y$ differ.
\end{proof}

\begin{lemma}
\label{lem:continuous}
The map $\Psi:\thurstoncompact\to C(\teichmullerspace):z\mapsto\Psi_z$
is continuous.
\end{lemma}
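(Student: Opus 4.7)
The plan is to reduce this directly to Proposition~\ref{prop:uniform_lengths}. First I would observe that the ratio $\lengthfunc_z(\eta)/\hlength_x(\eta)$ is homogeneous of degree zero in $\eta$, so the supremum in the definition of $\Psi_z(x)$ may be taken over the compact cross-section $\unitlams=\{\mu\in\measuredlams\mid\hlength_b(\mu)=1\}$ rather than over all of $\measuredlams$. This puts us in the setting where Proposition~\ref{prop:uniform_lengths} directly applies.

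Now suppose $z_n\to z$ in $\thurstoncompact$. By Proposition~\ref{prop:uniform_lengths}, $\lengthfunc_{z_n}\to\lengthfunc_z$ uniformly on the compact set $\unitlams$. For any fixed $x\in\teichmullerspace$, the function $\hlength_x$ is continuous on $\measuredlams$ and strictly positive on $\unitlams$ (since a measured lamination $\eta$ with $\hlength_b(\eta)=1$ is nonzero, and hyperbolic length is positive on nonzero measured laminations). By compactness, $1/\hlength_x$ is therefore bounded on $\unitlams$, so
\[
\frac{\lengthfunc_{z_n}(\eta)}{\hlength_x(\eta)}\longrightarrow\frac{\lengthfunc_z(\eta)}{\hlength_x(\eta)}
\qquad\text{uniformly on }\unitlams.
\]
The suprema therefore converge, and applying $\log$ yields pointwise convergence $\Psi_{z_n}(x)\to\Psi_z(x)$.

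To upgrade this to convergence in $C(\teichmullerspace)$ with its topology of uniform convergence on $\symdist$-bounded sets, I would establish equicontinuity of the family $\{\Psi_z : z\in\thurstoncompact\}$. Picking $\eta^*\in\unitlams$ at which the numerator below is attained,
\[
\frac{\sup_{\eta\in\unitlams}\lengthfunc_z(\eta)/\hlength_x(\eta)}{\sup_{\eta\in\unitlams}\lengthfunc_z(\eta)/\hlength_y(\eta)}
\le\frac{\lengthfunc_z(\eta^*)/\hlength_x(\eta^*)}{\lengthfunc_z(\eta^*)/\hlength_y(\eta^*)}
=\frac{\hlength_y(\eta^*)}{\hlength_x(\eta^*)}\le e^{\stretchdist(x,y)},
\]
and symmetrically, so $|\Psi_z(x)-\Psi_z(y)|\le\symdist(x,y)$ uniformly in $z$. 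Since $\symdist$ is proper by Assumption~\ref{ass:proper}, bounded sets are precompact, and a uniformly Lipschitz family that converges pointwise converges uniformly on compact sets. There is no real obstacle: Proposition~\ref{prop:uniform_lengths} has already absorbed the content, and the remaining argument is a routine manipulation of a supremum of uniformly convergent continuous positive functions over a compact set. The only mild point of care is verifying that $\hlength_x$ is bounded below on $\unitlams$, which follows from compactness and the positivity of hyperbolic length on nonzero measured laminations.
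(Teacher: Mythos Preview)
Your proof is correct and follows essentially the same route as the paper: reduce to Proposition~\ref{prop:uniform_lengths}, use that $\hlength_x$ is bounded away from zero on the compact cross-section $\unitlams$ to get uniform convergence of the ratio and hence pointwise convergence of $\Psi_{z_n}$, then upgrade to uniform convergence on bounded sets via the $1$-Lipschitz property. The only difference is cosmetic: the paper invokes the earlier observation from Section~\ref{horoboundary} that pointwise and uniform-on-bounded-sets convergence coincide for $1$-Lipschitz functions, whereas you re-derive the Lipschitz bound directly from the definition of $\Psi_z$---which is arguably cleaner here, since it applies uniformly to all $z\in\thurstoncompact$ without first identifying $\Psi_z$ with an element of the horofunction closure.
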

\begin{proof}
Let $x_n$ be a sequence in $\thurstoncompact$ converging to a point $x$
also in $\thurstoncompact$.
By Proposition~\ref{prop:uniform_lengths}, $\lengthfunc_{x_n}$ converges
uniformly on compact sets to $\lengthfunc_{x}$.
For all $y\in\teichmullerspace$, the function $\hlength_y$ is bounded
away from zero on $\unitlams$.
We conclude that $\lengthfunc_{x_n}(\cdot)/ \hlength_y(\cdot)$ converges
uniformly on $\unitlams$ to $\lengthfunc_{x}(\cdot)/ \hlength_y(\cdot)$,
for all $y\in\teichmullerspace$. It follows that $\Psi_{x_n}$
converges pointwise to $\Psi_{x}$.
As noted before, this implies that $\Psi_{x_n}$ converges to $\Psi_{x}$
uniformly on bounded sets of $\teichmullerspace$.
\end{proof}

\begin{theorem}
\label{thm:horothurston}
\label{thm:homeo}
The map $\Psi$ is a homeomorphism between the Thurston compactification
and the horofunction compactification of $\teichmullerspace$.
\end{theorem}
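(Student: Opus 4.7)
The plan is to assemble the preceding lemmas with the compactness of the Thurston compactification, so that the theorem becomes an essentially formal consequence. First I note that $\thurstoncompact$ is compact, and by Lemmas~\ref{lem:injective} and~\ref{lem:continuous}, the map $\Psi$ is a continuous injection from $\thurstoncompact$ into the Hausdorff space $C(\teichmullerspace)$. A standard topological argument (a continuous injection from a compact space into a Hausdorff space is a closed embedding) then yields that $\Psi$ is a homeomorphism onto its image $\Psi(\thurstoncompact)$.

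It remains to identify $\Psi(\thurstoncompact)$ with the horofunction compactification $\closure\{\distfn_z\mid z\in\teichmullerspace\}$. The key observation here is that for $z\in\teichmullerspace$ the defining formula for $\Psi_z$ gives, as already noted just after that definition,
\begin{align*}
\Psi_z(x)=\stretchdist(x,z)-\stretchdist(b,z)=\distfn_z(x),
\end{align*}
so $\Psi$ restricted to $\teichmullerspace$ coincides with $\distfn$. Since $\Psi(\thurstoncompact)$ is a compact, hence closed, subset of $C(\teichmullerspace)$ containing $\distfn(\teichmullerspace)$, it contains $\closure{\distfn(\teichmullerspace)}$. For the reverse inclusion, given any $[\mu]\in\projmeasuredlams$, I would pick a sequence $x_n$ in $\teichmullerspace$ converging to $[\mu]$ in the Thurston compactification (such a sequence exists since $\thurstoncompact$ is a compactification of $\teichmullerspace$); by continuity of $\Psi$ we then have $\distfn_{x_n}=\Psi_{x_n}\to\Psi_{[\mu]}$, so $\Psi_{[\mu]}\in\closure{\distfn(\teichmullerspace)}$. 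Injectivity of $\Psi$, together with $\Psi_z=\distfn_z$ on $\teichmullerspace$, also ensures that $\Psi_{[\mu]}$ is never of the form $\distfn_z$ for $z\in\teichmullerspace$, so the image of the Thurston boundary really lands in the horofunction boundary.

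The genuine work was entirely absorbed into Lemma~\ref{lem:injective}, whose proof used density of maximal uniquely-ergodic laminations together with Lemma~\ref{lem:optimal_dir} to pinpoint where the supremum defining $\Psi_z$ concentrates as $z$ is approached by a sequence from $\teichmullerspace$. Once injectivity and continuity of $\Psi$ are in hand, the present theorem is then just compactness plus a diagonal choice of approximating sequences, with no further analytic input required.
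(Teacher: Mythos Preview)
Your proof is correct and follows essentially the same approach as the paper's: both invoke Lemmas~\ref{lem:injective} and~\ref{lem:continuous} to get a continuous injection from the compact space $\thurstoncompact$ into Hausdorff $C(\teichmullerspace)$, deduce that $\Psi$ is a homeomorphism onto its image, and then use density of $\teichmullerspace$ in $\thurstoncompact$ together with continuity to conclude $\Psi(\thurstoncompact)=\closure\Psi(\teichmullerspace)$. The only difference is cosmetic: the paper compresses your sequence argument for the inclusion $\Psi(\thurstoncompact)\subset\closure\Psi(\teichmullerspace)$ into the single line ``using the continuity again,'' and does not spell out your final remark that the Thurston boundary lands in the horofunction boundary (which is immediate from injectivity anyway).
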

\begin{proof}
The injectivity of $\Psi$ was proved in Lemma~\ref{lem:injective}, and so
$\Psi$ is a bijection from $\thurstoncompact$ to its image.
The map $\Psi$ is continuous by Lemma~\ref{lem:continuous}.
As a continuous bijection from a compact space to a Hausdorff one,
$\Psi$ must be a homeomorphism from $\thurstoncompact$
to its image.
So $\Psi(\thurstoncompact)$ is compact and therefore closed.
Using the continuity again, we get
$\Psi(\teichmullerspace)\subset \Psi(\thurstoncompact)
\subset \closure\Psi(\teichmullerspace)$.
Taking closures, we get
$\Psi(\thurstoncompact)=\closure\Psi(\teichmullerspace)$,
which is the horocompactification.
\end{proof}

\section{Horocylic foliations and stretch lines}
\label{sec:busemann_points}

Our goal in this section is to show that every horofunction of
the Lipschitz metric is Busemann. This will be achieved by showing that every
horofunction is the limit of a particular type of geodesic introduced
by Thurston~\cite{thurston_minimal},
called a \emph{stretch line}.\index{stretch line}

Let $\mu$ be a \emph{complete} geodesic lamination.
In other words,\index{complete geodesic lamination}
$\mu$ is not strictly contained within another geodesic lamination,
or equivalently, the complementary regions of $\mu$
are all isometric to open ideal triangles in hyperbolic space.
Note that if the surface $S$ has punctures, then $\mu$ has leaves going out to
the cusps.

\begin{figure}
\input{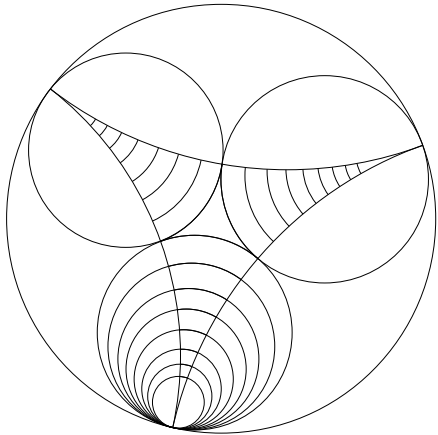}
\caption{An ideal triangle foliated by horocycles.}
\label{fig:horofoliation}
\end{figure}

We foliate each of the complementary triangles of $\mu$ with horocyclic arcs
as shown in Figure~\ref{fig:horofoliation}.
The horocyclic arcs meet the boundary of each triangle perpendicularly.
Note that there is a non-foliated region at the center of each triangle, which
is bounded by three horocyclic arcs meeting tangentially. So, the foliation
obtained is actually a partial foliation.
This partial foliation on $S\backslash \mu$ may be extended continuously
to a partial foliation on the whole of $S$.
Given a hyperbolic structure $g$ on $S$, we define a transverse measure
on the partial foliation on $S$ by requiring the measure of every
sub-arc of $\mu$ to be its length in the metric $g$.
The partial foliation with this transverse measure is called the
\emph{horocyclic foliation}, and is denoted $F_\mu(g)$.\index{horocyclic foliation}
Collapsing all non-foliated regions, we obtain
a well-defined element $F_\mu(g)$ of $\measuredfoliations$,
the space of measured foliations on $S$ up to Whitehead equivalence.\index{Whitehead equivalence}
Recall that two measured foliations are said to be Whitehead equivalent 
if one may be deformed to the other by isotopies, deformations that
collapse to points arcs joining a pair of singularities, and the
inverses of such maps.

Note that the horocyclic foliation has around each puncture an annulus
of infinite width foliated by closed leaves parallel to the puncture.
Such a foliation is said to be \emph{trivial around punctures}.
A measured foliation is said to be \emph{totally transverse} to a geodesic\index{totally transverse}
lamination if it is transverse to the lamination and trivial around punctures.
A measured foliation class is said to be totally transverse to a
geodesic lamination if it has a representative that is totally transverse.
Let $\measuredfoliations(\mu)$ be the set of measure classes of
measured foliations that are totally transverse to~$\mu$.
The horocyclic foliation is clearly in $\measuredfoliations(\mu)$.
Thurston proved that the map
$\horocyclic_\mu: \teichmullerspace\to\measuredfoliations(\mu):
   g \mapsto F_\mu(g)$
is in fact a homeomorphism.

The horocyclic foliation gives us a way of deforming the hyperbolic structure
by stretching along $\mu$.
Define the \emph{stretch line} directed by $\mu$ and passing through
$x\in\teichmullerspace$ to be
\begin{align*}
\stretchline_{\mu,x}(t):=\horocyclic^{-1}_\mu(e^t F_\mu(x)),
\qquad\text{for all $t\in\R$}.
\end{align*}
Stretch lines are geodesics for Thurston's Lipschitz metric, that is,
\begin{align*}
\stretchdist(\stretchline_{\mu,x}(s), \stretchline_{\mu,x}(t)) = t-s,
\end{align*}
for all $s$ and $t$ in $\R$ with $s<t$, provided that $\mu$ does not just
consist of geodesics converging at both ends to punctures.

The \emph{stump} of a geodesic lamination is its largest sub-lamination\index{stump}
on which there can be put a compactly-supported transverse measure.
Th\'eret~\cite{theret_negative} showed that a measured foliation class
is totally transverse to a complete geodesic lamination $\mu$ if and only if
its associated measured lamination in $\measuredlams$ transversely meets
every component of the stump of $\mu$.
\begin{theorem}
\label{thm:busemann}
Every point of the horofunction boundary of Thurston's Lipschitz metric
is a Busemann point.\index{Busemann point}
\end{theorem}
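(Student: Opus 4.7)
The plan is to realise each horofunction as the limit of a stretch line. Stretch lines are geodesics parameterised by arc length, hence almost-geodesics (with $\epsilon=0$), so by Rieffel's theorem recalled in Section~\ref{horoboundary} their horofunction limits are automatically Busemann points. By Theorem~\ref{thm:horothurston}, every horofunction arises as $\Psi_{\mu}$ for some $[\mu]\in\projmeasuredlams$, so the task reduces to producing, for each such $[\mu]$, a stretch line that converges to $[\mu]$ in the Thurston compactification.

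Fix a representative $\mu\in\measuredlams$ and a base-point $x\in\teichmullerspace$. First I would complete the support of $\mu$ to a complete geodesic lamination $\nu$ by adjoining finitely many geodesic leaves, including leaves running into each puncture of $S$, until every complementary region is an ideal triangle. Because $\mu$ lies in $\measuredlams$ and is therefore compactly supported, this completion can be arranged so that the stump of $\nu$ has support equal to that of $\mu$, and $\mu$ is a transverse measure on the stump meeting every one of its components. Th\'eret's criterion then ensures that the horocyclic foliation $F_{\nu}(x)$ is totally transverse to $\nu$, so the stretch line
\begin{align*}
\stretchline_{\nu,x}(t)=\horocyclic^{-1}_{\nu}(e^{t}F_{\nu}(x))
\end{align*}
is defined for all $t\in\R$ and is a genuine geodesic of Thurston's Lipschitz metric.

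The core geometric step is to verify that $\stretchline_{\nu,x}(t)$ converges to $[\mu]$ in the Thurston compactification as $t\to\infty$. Using Thurston's description of how lengths of measured laminations transform under stretching, one would establish an asymptotic of the form
\begin{align*}
e^{-t}\hlength_{\stretchline_{\nu,x}(t)}(\eta)\to C\,i(\mu,\eta),
\qquad\text{for every }\eta\in\measuredlams,
\end{align*}
with $C>0$ depending on $x$: lengths of laminations supported on the stump grow like $e^{t}$, while lengths of laminations disjoint from the stump remain bounded. This identifies the projective limit in Bonahon's geodesic-current space as $[\mu]$. Theorem~\ref{thm:horothurston} then converts Thurston-boundary convergence into convergence $\Psi_{\stretchline_{\nu,x}(t)}\to\Psi_{\mu}$ in $C(\teichmullerspace)$, and $\Psi_{\mu}$ is Busemann.

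The main obstacle is the asymptotic just stated, especially when $\mu$ is not uniquely ergodic on its support, so that the stump of $\nu$ carries multiple projective transverse measures. In that case one must verify that the scalar multiple picked out by the stretch flow starting at $x$ is exactly the class $[\mu]$ we began with, rather than some other projective measure on the same underlying geodesic lamination. This is likely to force a careful choice of the extra leaves adjoined during the completion and to lean on Thurston's quantitative description of the horocyclic-foliation homeomorphism $\horocyclic_{\nu}$ to pin down the constant $C$ and the limit measure.
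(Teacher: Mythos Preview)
Your approach has the direction of convergence backwards. A stretch line $\stretchline_{\nu,x}(t)=\horocyclic_{\nu}^{-1}(e^{t}F_{\nu}(x))$ converges in the \emph{positive} direction to the projective class $[F_{\nu}(x)]$ of its horocyclic foliation, not to the directing lamination or its stump. This is Papadopoulos's result, and it is also visible from Th\'eret's length estimate (used later in the proof of Theorem~\ref{thm:detourcost}): one has $e^{-t}\hlength_{\stretchline_{\nu,x}(t)}(\eta)\to i(F_{\nu}(x),\eta)$, not $C\,i(\mu,\eta)$. These two functionals are never proportional, since $i(\mu,\mu)=0$ while $i(F_{\nu}(x),\mu)>0$ by transversality of the horocyclic foliation to the stump. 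Your own heuristic already reveals the contradiction: if laminations supported on the stump have length growing like $e^{t}$, then the limiting functional must be positive on them, but $i(\mu,\cdot)$ vanishes there. So your stretch line lands at the wrong boundary point, and no choice of completion or base-point repairs this; the obstacle you flag about non-unique ergodicity is not the real issue.

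The paper inverts the roles. Given the target $[\nu]\in\projmeasuredlams$, it chooses an \emph{auxiliary} maximal uniquely-ergodic lamination $\mu$ with $[\mu]\neq[\nu]$, completes $\mu$ to $\overline{\mu}$, and takes as foliation the one associated to the target $\nu$ itself. Th\'eret's criterion then applies because the stump of $\overline{\mu}$ is the single uniquely-ergodic lamination $\mu$, and $\nu$ meets it transversely since $i(\mu,\nu)>0$; hence $F$ is totally transverse to $\overline{\mu}$ and $t\mapsto\horocyclic_{\overline{\mu}}^{-1}(e^{t}F)$ is a genuine stretch line. Papadopoulos's theorem now gives the forward limit as the projective class of the foliation, namely $[\nu]$, and the proof is finished in one stroke without any asymptotic computation. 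The choice of $\mu$ maximal and uniquely-ergodic is exactly what makes the transversality check trivial for every target $\nu$.
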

\begin{proof}
By Theorem~\ref{thm:horothurston}, the horofunction and Thurston boundaries
coincide.
Let $[\nu]\in\projmeasuredlams$ be any point of the Thurston boundary.
Choose a maximal uniquely-ergodic element $\mu$ of $\measuredlams$ so that
$[\mu]$ is different from $[\nu]$. So, $i(\nu,\mu)>0$.
Take a completion $\overline\mu$ of $\mu$.
The stump of $\overline\mu$ contains the support of $\mu$,
and so must equal this support, since $\mu$ is maximal.
Since $\mu$ is uniquely-ergodic, it has only one component,
which meets $\nu$ transversely.
Let $F$ denote the element of $\measuredfoliations$ associated
to $\nu$.
By~\cite[Lemma~1.8]{theret_negative}, $F$ is totally transverse to
$\overline\mu$.
Therefore the map $t\mapsto\horocyclic^{-1}_{\overline\mu}(e^t F)$ is
a stretch line directed by $\overline\mu$.
It was shown in~\cite{papadopoulos_extension} that such a stretch line
converges in the positive direction to $[\nu]$ in the Thurston
compactification. Since a stretch line is a geodesic, we conclude that
the horofunction $\Psi_\nu$ corresponding to $[\nu]$ is a Busemann point.
\end{proof}

\section{The detour cost}
\label{sec:detour}
\index{detour cost}

Let $(X,d)$ be a possibly non-symmetric metric space with base-point $b$.
We define the \emph{detour cost}
for any two horofunctions $\xi$ and $\eta$ in $X(\infty)$ to be
\begin{align*}
H(\xi,\eta)
   &= \sup_{W\ni\xi} \inf_{x\in W\intersection X} \Big( d(b,x)+\eta(x) \Big),
\end{align*}
where the supremum is taken over all neighbourhoods $W$ of $\xi$ in
$X\cup X(\infty)$.
This concept appears in~\cite{AGW-m}.
An equivalent definition is
\begin{align}\label{eq:3.2}
H(\xi,\eta) &= \inf_{\gamma} \liminf_{t\to\infty}
                 \Big( d(b,\gamma(t))+\eta(\gamma(t)) \Big),
\end{align}
where the infimum is taken over all paths $\gamma:\Rplus\to X$ converging
to $\xi$.

\renewcommand{\theenumi}{(\roman{enumi})}
\renewcommand{\labelenumi}{\theenumi}

\begin{lemma}
\label{lem:triangle_detour}
Let $\xi$ and $\eta$ be horofunctions. Then,
\begin{align*}
\eta(x) \le \xi(x) + H(\xi,\eta),
\qquad\text{for all $x$ in $X$}.
\end{align*}
\end{lemma}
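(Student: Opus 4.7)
The plan is to apply the horofunction triangle inequality~\eqref{eqn:ideal_tri_ineq} along a path converging to $\xi$, then to pass to the infimum using the second characterisation \eqref{eq:3.2} of the detour cost.

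First I would fix $x\in X$ and an arbitrary path $\gamma\colon\Rplus\to X$ converging to $\xi$ in the horofunction compactification. Applying inequality~\eqref{eqn:ideal_tri_ineq} to $\eta$ at the pair $(x,\gamma(t))$ yields
\begin{align*}
\eta(x) \le d(x,\gamma(t)) + \eta(\gamma(t)),
\qquad\text{for all $t\in\Rplus$}.
\end{align*}

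Next, since $\gamma(t)\to\xi$, the functions $\distfn_{\gamma(t)}(\cdot)=d(\cdot,\gamma(t))-d(b,\gamma(t))$ converge to $\xi$ in $C(X)$, and in particular pointwise at $x$ (this is automatic because all elements of $\closure\{\distfn_z\mid z\in X\}$ are $1$-Lipschitz in $\symdist$, as was noted earlier). Hence $d(x,\gamma(t))=\xi(x)+d(b,\gamma(t))+\epsilon(t)$ with $\epsilon(t)\to 0$. Substituting into the displayed inequality and taking $\liminf$ as $t\to\infty$ gives
\begin{align*}
\eta(x) \le \xi(x) + \liminf_{t\to\infty}\Big(d(b,\gamma(t)) + \eta(\gamma(t))\Big).
\end{align*}

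Finally, because $\gamma$ was an arbitrary path converging to $\xi$, I would take the infimum over all such paths on the right-hand side and invoke the formula~\eqref{eq:3.2} to conclude that $\eta(x)\le\xi(x)+H(\xi,\eta)$. There is essentially no serious obstacle here: the argument is a direct combination of the ideal triangle inequality for horofunctions with the definition of the detour cost. The only point that needs brief comment is the legitimacy of passing from the convergence $\distfn_{\gamma(t)}\to\xi$ in $C(X)$ to pointwise convergence at the single point $x$, which is immediate.
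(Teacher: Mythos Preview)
Your proof is correct and follows essentially the same route as the paper: both split $d(x,\gamma(t))$ as $\big(d(x,\gamma(t))-d(b,\gamma(t))\big)+d(b,\gamma(t))$, use that the first bracket converges to $\xi(x)$, and then identify the remaining term with the detour cost via~\eqref{eq:3.2}. The only cosmetic difference is that the paper picks a single path along which $d(b,\gamma(t))+\eta(\gamma(t))$ tends to $H(\xi,\eta)$ and passes to the limit once, whereas you keep the path arbitrary and take the infimum at the end; your variant is arguably cleaner since it sidesteps the (easy but unproved) claim that such a realising path exists.
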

\begin{proof}
By~(\ref{eqn:ideal_tri_ineq}),
\begin{align*}
\eta(x) \le \Big( d(x,z) - d(b,z) \Big) + \Big( d(b,z) + \eta(z) \Big),
\qquad\text{for all $x$ and $z$ in $X$}.
\end{align*}
Note that there is always a path $\gamma$ converging to $\xi$
such that
\begin{align*}
\lim_{t\to\infty} \Big( d(b,\gamma(t))+\eta(\gamma(t)) \Big)=H(\xi,\eta).
\end{align*}
Taking the limit as $z$ moves along such a path gives the result.
\end{proof}

The following was proved in~\cite[Lemma 3.3]{walsh_minimum}.
\begin{lemma}
\label{lem:along_geos}
Let $\gamma$ be an almost-geodesic converging to a Busemann point $\xi$,
and let $y\in X$.
Then,
\begin{equation*}
\lim_{t\to\infty} d(y,\gamma(t)) + \xi(\gamma(t)) = \xi(y).
\end{equation*}
Moreover, for any horofunction $\eta$,
\begin{equation*}
 H(\xi,\eta) = \lim_{t\to\infty} d(b,\gamma(t)) + \eta(\gamma(t)).
\end{equation*}
\end{lemma}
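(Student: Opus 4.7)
\medskip

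\noindent\textbf{Proof plan.}

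The plan is to prove part (i) directly from the almost-geodesic condition and the ideal triangle inequality~\eqref{eqn:ideal_tri_ineq}, and then deduce part (ii) by combining part (i) with Lemma~\ref{lem:triangle_detour}. The key preliminary fact I will extract from the almost-geodesic condition is that $d(\gamma(s),\gamma(t)) = t-s + O(\epsilon)$ whenever $s\le t$ are both large enough; in particular, taking $s=t$ yields $d(\gamma(0),\gamma(t))=t+O(\epsilon)$.

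For part (i), the lower bound $\xi(y) \le \liminf_{t\to\infty}\bigl(d(y,\gamma(t))+\xi(\gamma(t))\bigr)$ is immediate from the ideal triangle inequality~\eqref{eqn:ideal_tri_ineq}. For the matching upper bound, I would fix $\epsilon>0$ and evaluate $\xi(\gamma(t))$ using that $\gamma$ itself converges to $\xi$:
\begin{align*}
\xi(\gamma(t)) = \lim_{u\to\infty}\bigl(d(\gamma(t),\gamma(u)) - d(b,\gamma(u))\bigr).
\end{align*}
Applying the almost-geodesic estimate to $d(\gamma(t),\gamma(u))$ and using that $d(b,\gamma(u)) = d(\gamma(0),\gamma(u))-\psi_{\gamma(u)}(\gamma(0)) = u - \xi(\gamma(0)) + o_u(1) + O(\epsilon)$, one obtains $\xi(\gamma(t)) = -t + \xi(\gamma(0)) + O(\epsilon)$ for all $t$ large enough. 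Feeding this into the definition $\xi(y)=\lim_{u}d(y,\gamma(u))-d(b,\gamma(u))$ shows that $d(y,\gamma(t)) - t \to \xi(y)-\xi(\gamma(0))$, and hence $d(y,\gamma(t))+\xi(\gamma(t))\to\xi(y)$ as required.

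Part (ii) is then short. Using $\gamma$ as a competing path in the definition~\eqref{eq:3.2} of the detour cost yields $H(\xi,\eta)\le\liminf_{t\to\infty}\bigl(d(b,\gamma(t))+\eta(\gamma(t))\bigr)$. Conversely, Lemma~\ref{lem:triangle_detour} applied at $x=\gamma(t)$ gives $\eta(\gamma(t))\le \xi(\gamma(t))+H(\xi,\eta)$, and adding $d(b,\gamma(t))$ and invoking part (i) with $y=b$ (noting $\xi(b)=0$) gives $\limsup_{t\to\infty}\bigl(d(b,\gamma(t))+\eta(\gamma(t))\bigr)\le H(\xi,\eta)$. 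The two inequalities force convergence and equality.

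The only real technical obstacle is the careful bookkeeping of the error terms in part (i): the $O(\epsilon)$ coming from the almost-geodesic condition must be disentangled from the $o_u(1)$ coming from the pointwise convergence $\psi_{\gamma(u)}\to\xi$. The standard trick of letting $\epsilon\to 0$ after taking $u\to\infty$ (and then $t\to\infty$) handles this cleanly, but requires checking that the implied constants in $O(\epsilon)$ are genuinely independent of $u$ and $t$ once both are past the threshold $T_\epsilon$ from the almost-geodesic definition.
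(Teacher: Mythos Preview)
Your proposal is correct and follows essentially the same route as the paper. The paper also extracts $|d(\gamma(0),\gamma(t))-t|<\epsilon$ by setting $s=t$, then lets $t\to\infty$ in the almost-geodesic inequality to obtain $|d(\gamma(0),\gamma(s))+\xi(\gamma(s))-\xi(\gamma(0))|\le 2\epsilon$, combines this with $d(y,\gamma(s))-d(\gamma(0),\gamma(s))\to\xi(y)-\xi(\gamma(0))$, and proves the second statement exactly as you do via Lemma~\ref{lem:triangle_detour} and~\eqref{eq:3.2}; the only cosmetic difference is that the paper uses $d(\gamma(0),\gamma(s))$ rather than $s$ itself as the intermediate quantity, which makes your separate lower-bound step via~\eqref{eqn:ideal_tri_ineq} unnecessary.
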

\begin{proof}
Let $\epsilon>0$.
Putting $s=t$ in the definition of almost-geodesic, we see that
\begin{equation*}
| d(\gamma(0),\gamma(t)) - t | < \epsilon,
\qquad\text{for $t$ large enough.}
\end{equation*}
Using this and again the fact that $\gamma$ is an almost-geodesic, we get
\begin{equation*}
| d(\gamma(0),\gamma(s)) + d(\gamma(s),\gamma(t)) - d(\gamma(0),\gamma(t))|
< 2\epsilon,
\end{equation*}
for $s$ and $t$ large enough, with $s\le t$.
Letting $t$ tend to infinity gives
\begin{equation*}
|d(\gamma(0),\gamma(s)) + \xi(\gamma(s)) - \xi(\gamma(0))| \le 2\epsilon,
\qquad\text{for $s$ large enough.}
\end{equation*}
But, since $\gamma$ converges to $\xi$,
\begin{equation*}
|d(y,\gamma(s)) - d(\gamma(0),\gamma(s)) - \xi(y) + \xi(\gamma(0))|
   < \epsilon,
\qquad\text{for $s$ large enough.}
\end{equation*}
Combining these, we deduce the first statement of the lemma.

By Lemma~\ref{lem:triangle_detour},
$\eta(x) \le \xi(x) + H(\xi,\eta)$,
for all $x$ in $X$.
Evaluating at $x=\gamma(s)$, adding $d(b,\gamma(s))$ to both sides, and using
the first part of the lemma with $y=b$, we get
\begin{align*}
\limsup_{s\to\infty} d(b,\gamma(s)) + \eta(\gamma(s)) \le H(\xi,\eta).
\end{align*}
On the other hand, from~(\ref{eq:3.2}),
\begin{equation*}
H(\xi,\eta)
   \le \liminf_{s\to\infty} d(b,\gamma(s)) + \eta(\gamma(s)) .
\end{equation*}
This establishes the second statement of the lemma.
\end{proof}

\begin{proposition}
\label{prop:transformH}
Let $\iso$ be an isometry from one possibly non-symmetric metric
space $(X,d)$ to another $(X',d')$, with base-points $b$ and $b'$ respectively.
Then, the detour costs in $X$ and $X'$ are related by
\begin{align*}
H'(\iso\cdot\xi,\iso\cdot\eta)
   = \xi(\iso^{-1}(b')) + H(\xi,\eta) - \eta( \iso^{-1}(b')),
\text{\qquad for all $\xi,\eta\in X(\infty)$}.
\end{align*}
In particular, every isometry preserves finiteness of the detour cost.
\end{proposition}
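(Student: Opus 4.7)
The plan is to use the path-based formula \eqref{eq:3.2} for the detour cost, together with Proposition~\ref{prop:transform_horo} which identifies the action of the isometry on horofunctions. Paths $\iso\after\gamma : \Rplus \to X'$ converging to $\iso\cdot\xi$ correspond exactly to paths $\gamma:\Rplus\to X$ converging to $\xi$, because $\iso$ extends to a homeomorphism of horofunction compactifications and is itself a homeomorphism of $X$ with $X'$. So the infimum defining $H'(\iso\cdot\xi,\iso\cdot\eta)$ may be taken over paths of the form $\iso\after\gamma$.

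Fix such a path $\gamma$. Using Proposition~\ref{prop:transform_horo} applied to $\eta$ at the point $\iso(\gamma(t))$, I get $(\iso\cdot\eta)(\iso(\gamma(t))) = \eta(\gamma(t)) - \eta(\iso^{-1}(b'))$. Also $\dist'(b',\iso(\gamma(t))) = \dist(\iso^{-1}(b'),\gamma(t))$ since $\iso$ is an isometry. Adding these gives
\begin{align*}
\dist'(b',\iso(\gamma(t))) + (\iso\cdot\eta)(\iso(\gamma(t)))
   = \dist(\iso^{-1}(b'),\gamma(t)) + \eta(\gamma(t)) - \eta(\iso^{-1}(b')).
\end{align*}
The next step is to rewrite the first term on the right by splitting off the base-point $b$: $\dist(\iso^{-1}(b'),\gamma(t)) = \dist(b,\gamma(t)) + \distfn_{\gamma(t)}(\iso^{-1}(b'))$. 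Since $\gamma(t) \to \xi$ and horofunction convergence is pointwise, $\distfn_{\gamma(t)}(\iso^{-1}(b')) \to \xi(\iso^{-1}(b'))$.

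Combining these, as $t\to\infty$ along $\gamma$, the quantity under the liminf differs from $\dist(b,\gamma(t))+\eta(\gamma(t))$ by terms that converge to $\xi(\iso^{-1}(b')) - \eta(\iso^{-1}(b'))$. Hence
\begin{align*}
\liminf_{t\to\infty}\Big(\dist'(b',\iso(\gamma(t))) + (\iso\cdot\eta)(\iso(\gamma(t)))\Big)
= \xi(\iso^{-1}(b')) - \eta(\iso^{-1}(b')) + \liminf_{t\to\infty}\Big(\dist(b,\gamma(t))+\eta(\gamma(t))\Big).
\end{align*}
Taking the infimum over all paths $\gamma$ converging to $\xi$ and invoking \eqref{eq:3.2} on both sides yields the desired formula. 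The final sentence about preservation of finiteness then follows because the correcting terms $\xi(\iso^{-1}(b'))$ and $\eta(\iso^{-1}(b'))$ are finite real numbers (horofunctions take values in $\R$).

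The only delicate step is the interchange of liminf with the additive splitting, but this is routine because $\distfn_{\gamma(t)}(\iso^{-1}(b'))$ converges (not merely has a liminf) to $\xi(\iso^{-1}(b'))$, so the additive constant passes through the liminf without issue. No genuine obstacle arises; the proposition is essentially a bookkeeping consequence of Proposition~\ref{prop:transform_horo} and the symmetric role of $b$ and $\iso^{-1}(b')$ in the definition of the detour cost.
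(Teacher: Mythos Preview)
Your proof is correct and follows essentially the same route as the paper: both use the path formula~\eqref{eq:3.2}, Proposition~\ref{prop:transform_horo}, and the decomposition $d(\iso^{-1}(b'),\gamma(t)) = d(b,\gamma(t)) + \distfn_{\gamma(t)}(\iso^{-1}(b'))$, with the convergent term $\distfn_{\gamma(t)}(\iso^{-1}(b'))\to\xi(\iso^{-1}(b'))$ pulled out of the $\liminf$. You are simply more explicit than the paper about why that last step is legitimate.
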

\begin{proof}
Let $\xi$ and $\eta$ be in $X(\infty)$.
By Proposition~\ref{prop:transform_horo}, the horofunction $\eta$ is mapped
by $\iso$ to
$\iso\cdot\eta(\cdot) = \eta(\iso^{-1}(\cdot)) - \eta(\iso^{-1}(b'))$.
We have
\begin{align*}
H'(\iso\cdot\xi,\iso\cdot\eta)
   &= \inf_\gamma \liminf_{t\to\infty}
      \Big( d'(b',\iso(\gamma(t)))+\eta(\gamma(t))-\eta(\iso^{-1}(b')) \Big) \\
   &= \inf_\gamma \liminf_{t\to\infty}
        \Big( d(\iso^{-1}(b'),\gamma(t))-d(b,\gamma(t)) \Big) \\
   & \qquad\qquad\qquad\qquad + \Big( d(b,\gamma(t)) + \eta(\gamma(t)) \Big)
          - \eta( \iso^{-1}(b')) \\
   & = \xi(\iso^{-1}(b')) + H(\xi,\eta) - \eta( \iso^{-1}(b')),
\end{align*}
where each time the infimum is taken over all paths in $X$ converging to $\xi$.
\end{proof}
Note that we can take $(X,d)$ and $(X',d')$ to be identical and the isometry
to be the identity map, in which case the proposition says how the detour cost
depends on the base-point:
\begin{align*}
H'(\xi,\eta) = \xi(b') + H(\xi,\eta) - \eta(b').
\end{align*}

For the next proposition, we will need the following assumption.
\begin{assumption}
\label{ass:infinite_dist}
For every sequence $x_n$ in $X$, if $\symdist(b,x_n)$ converges to infinity,
then so does $\dist(b,x_n)$.
\end{assumption}
This assumption is satisfied by the Lipschitz
metric~\cite{papadopoulos_theret_topology}.
\begin{proposition}
\label{prop:zero_on_busemann}
Assume that $(X,d)$ satisfies
assumptions~\ref{ass:proper}, \ref{ass:geodesic},
\ref{ass:topology}, and \ref{ass:infinite_dist}.
If $\xi$ is a horofunction,
then $H(\xi,\xi)=0$ if and only if $\xi$ is Busemann.
\end{proposition}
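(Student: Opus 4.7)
The proof splits into two implications, with only one direction requiring real work. For the easy direction, if $\xi$ is a Busemann point and $\gamma$ is an almost-geodesic converging to it, then the first statement of Lemma~\ref{lem:along_geos} with $y=b$ gives $\lim_{t\to\infty}(d(b,\gamma(t))+\xi(\gamma(t)))=\xi(b)=0$, while the second statement of the same lemma identifies this limit with $H(\xi,\xi)$; hence $H(\xi,\xi)=0$.

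For the converse, my plan is to distill an almost-geodesic converging to $\xi$ from the minimising data supplied by $H(\xi,\xi)=0$. Using the equivalent formulation~(\ref{eq:3.2}), I pick a path $\gamma\colon\Rplus\to X$ converging to $\xi$ along which $\liminf_{t\to\infty}(d(b,\gamma(t))+\xi(\gamma(t)))=0$, and select times $t_n\to\infty$ realising this liminf. Writing $z_n:=\gamma(t_n)$, $T_n:=d(b,z_n)$ and $\epsilon_n:=T_n+\xi(z_n)$, I obtain $z_n\to\xi$ in $C(X)$, $\epsilon_n\to 0$, and Proposition~\ref{prop:to_infinity} combined with Assumption~\ref{ass:infinite_dist} forces $T_n\to\infty$. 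The crucial quantitative input is that horofunction convergence gives, for each fixed $m$,
\begin{equation*}
\lim_{n\to\infty}\bigl(d(z_m,z_n)-T_n\bigr)=\xi(z_m)=-T_m+\epsilon_m,
\end{equation*}
so $d(z_m,z_n)=(T_n-T_m)+\epsilon_m+\delta_{m,n}$ with $\delta_{m,n}\to 0$ as $n\to\infty$ for each fixed $m$. Informally, the waypoints $z_n$ are asymptotically aligned along a $d$-geodesic ray from $b$ toward $\xi$.

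A diagonal extraction now yields a subsequence $n_1<n_2<\cdots$ with $\epsilon_{n_k}<2^{-k}$ and $|\delta_{n_j,n_k}|<2^{-k}$ for all $j<k$. Using Assumption~\ref{ass:geodesic} I splice together $d$-geodesic segments from $b$ to $z_{n_1}$, then to $z_{n_2}$, and so on, obtaining a continuous path $\tilde\gamma\colon\Rplus\to X$ parametrised so that the waypoint $z_{n_k}$ carries parameter $T_{n_k}$. At the waypoints the almost-geodesic identity collapses to $d(b,z_{n_j})+d(z_{n_j},z_{n_k})-T_{n_k}=\epsilon_{n_j}+\delta_{n_j,n_k}$, which tends to $0$. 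Once $\tilde\gamma$ is known to be an almost-geodesic, it converges (by Rieffel's theorem) to some Busemann point $\xi^*$, and $\xi^*=\xi$ is then forced by $\tilde\gamma(T_{n_k})=z_{n_k}\to\xi$ in $C(X)$. The main obstacle I anticipate is the bookkeeping at parameters $s,t$ that lie strictly inside a segment rather than at waypoints: the natural $d$-arclength parameter on segment $k$ differs from the target parameter by the controlled amount $\epsilon_{n_k}+\delta_{n_k,n_{k+1}}$, and the triangle inequality for $d$ combined with the waypoint estimates above must be used to show the discrepancy still tends to zero uniformly for $s,t$ large.
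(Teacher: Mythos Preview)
Your outline is correct and is essentially the paper's argument: manufacture waypoints converging to $\xi$ with vanishing defect $d(b,\cdot)+\xi(\cdot)$, splice $d$-geodesic segments between them, and check the almost-geodesic estimate. The paper produces its waypoints by a slightly different device: rather than extracting a diagonal subsequence from one minimising path, it invokes Proposition~\ref{prop:transformH} to note that the detour cost still vanishes for \emph{any} choice of base-point, and then inductively chooses $x_{j+1}$ near $\xi$ with
\[
|d(x_j,x_{j+1})+\xi(x_{j+1})-\xi(x_j)|<\epsilon/2^{j+1}
\]
by taking $b'=x_j$ at step $j$. This controls consecutive increments directly and replaces your horofunction-convergence-plus-diagonal argument. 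For the bookkeeping you flag, the paper parametrises by arclength (so the upper bound $d(\gamma(0),\gamma(s))+d(\gamma(s),\gamma(t))\le t$ is automatic from concatenating geodesic segments) and gets the matching lower bound by telescoping through $\xi$ via inequality~(\ref{eqn:ideal_tri_ineq}); this handles points interior to a segment with no extra work and transfers verbatim to your construction if you switch to arclength rather than forcing $z_{n_k}$ to sit at parameter $T_{n_k}$. One small slip: the infimum in~(\ref{eq:3.2}) need not be realised by a single path, but you only use the sequence $z_n\to\xi$ with $d(b,z_n)+\xi(z_n)\to 0$, and that follows at once from the $\sup$--$\inf$ definition of $H$.
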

\begin{proof}
Suppose that $H(\xi,\xi)=0$.
Let $b'\in X$, and let $\xi'=\xi-\xi(b')$ be the horofunction
corresponding to $\xi$ when $b'$ is used as the base-point instead of $b$.
From Proposition~\ref{prop:transformH}, the detour cost with base-point $b'$
satisfies $H'(\xi',\xi')=0$.
So, for any $\epsilon>0$ and neighbourhood $W$ of $\xi'$ in the horofunction
compactification, we may find $x\in W\intersection X$ such that
$|d(b',x) + \xi(x) - \xi(b')| < \epsilon$.

Fix $\epsilon>0$, and let $x_0:=b$.
From the above, we may inductively find a sequence $x_j$ in $X$
converging to $\xi$ such that
\begin{align*}
|d(x_j,x_{j+1}) + \xi(x_{j+1}) - \xi(x_{j})| < \frac{\epsilon}{2^{j+1}},
\qquad\text{for all $j\in\N$}.
\end{align*}
For each $j$, take a finite-length geodesic path $\gamma_j$ from $x_j$
to $x_{j+1}$.

Since $x_j$ converges to a horofunction, we have by
Proposition~\ref{prop:to_infinity} that $\symdist(b,x_j)$ converges to infinity.
Therefore, $\dist(b,x_j)$ also converges to infinity.
But
\begin{align*}
\sum_{j=0}^n \dist(x_j,x_{j+1}) \ge \dist(b,x_{n+1}),
\qquad\text{for all $n\in\N$.}
\end{align*}
So, when we concatenate the geodesic paths $\{\gamma_j\}$, we obtain a
path $\gamma:\Rplus\to X$, defined on the whole of $\Rplus$.

Let $s$ and $t$ be in $\Rplus$, and let $n\in\N$ be such that
\begin{align*}
\sum_{j=0}^{n-1} d(x_j,x_{j+1}) < t \le \sum_{j=0}^{n} d(x_j,x_{j+1}).
\end{align*}
Write $\Delta:=\sum_{j=0}^{n} d(x_j,x_{j+1}) - t$.
So,
\begin{align*}
t  &= \sum_{j=0}^{n} d(x_j,x_{j+1}) - \Delta \\
   &\le -\xi(x_{n+1}) + \epsilon - \Delta \\
   &= (0-\xi(\gamma(s))) + (\xi(\gamma(s))-\xi(\gamma(t))) 
     +(\xi(\gamma(t))-\xi(x_{n+1})) + \epsilon - \Delta.
\end{align*}
Using~(\ref{eqn:ideal_tri_ineq}),
we get
\begin{align*}
t  &\le d(b,\gamma(s)) + d(\gamma(s),\gamma(t)) + d(\gamma(t),x_{n+1})
              + \epsilon - \Delta \\
   &= d(b,\gamma(s)) + d(\gamma(s),\gamma(t)) + \epsilon.
\end{align*}
Since $\gamma$ is a concatenation of geodesic segments,
\begin{align*}
d(b,\gamma(s)) + d(\gamma(s),\gamma(t)) \le s + (t-s) = t.
\end{align*}
Therefore, $\gamma$ is an almost-geodesic, and so it converges,
necessarily to $\xi$ since it passes through each of the points $x_j$.
So, $\xi$ is a Busemann point.

Now assume that $\xi$ is a Busemann point. So, there exists an almost-geodesic
converging to $\xi$.
It follows from Lemma~\ref{lem:along_geos} that $H(\xi,\xi)=0$.
\end{proof}

\begin{proposition}
\label{prop:H_properies}
For all horofunctions $\xi$, $\eta$, and $\nu$,
\begin{enumerate}
\item
\label{itema}
$H(\xi,\eta) \ge 0$;
\item
\label{itemb}
$H(\xi,\nu) \le H(\xi,\eta) + H(\eta,\nu)$.
\end{enumerate}
\end{proposition}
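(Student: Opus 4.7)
The plan is to treat the two parts separately, since (i) is really an assertion about the sign of the integrand while (ii) is a transitivity argument that reduces cleanly to Lemma~\ref{lem:triangle_detour}.

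For (i), I would start from the observation that every horofunction $\eta$ is a pointwise limit of functions of the form $\distfn_{z_n}(x)=\dist(x,z_n)-\dist(b,z_n)$. The triangle inequality $\dist(b,z_n)\le\dist(b,x)+\dist(x,z_n)$ gives $\distfn_{z_n}(x)\ge -\dist(b,x)$ for every $n$, so in the limit
\begin{align*}
\dist(b,x)+\eta(x)\ge 0,\qquad\text{for all $x\in X$.}
\end{align*}
Hence each quantity $\inf_{x\in W\cap X}(\dist(b,x)+\eta(x))$ appearing in the definition of $H(\xi,\eta)$ is nonnegative, and the supremum over neighbourhoods is nonnegative as well. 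This handles (i).

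For (ii), the key input is Lemma~\ref{lem:triangle_detour}, which tells us that
\begin{align*}
\nu(x)\le\eta(x)+H(\eta,\nu),\qquad\text{for all $x\in X$.}
\end{align*}
Adding $\dist(b,x)$ to both sides and taking the infimum over $x\in W\cap X$ for an arbitrary neighbourhood $W$ of $\xi$ gives
\begin{align*}
\inf_{x\in W\cap X}\bigl(\dist(b,x)+\nu(x)\bigr)
\le\inf_{x\in W\cap X}\bigl(\dist(b,x)+\eta(x)\bigr)+H(\eta,\nu).
\end{align*}
Taking the supremum over such $W$ on both sides yields $H(\xi,\nu)\le H(\xi,\eta)+H(\eta,\nu)$, which is the desired triangle inequality.

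Neither step should pose a real obstacle: (i) is just the horofunction version of the one-sided Lipschitz bound on $\distfn_z$, and (ii) is a direct consequence of the already-proved Lemma~\ref{lem:triangle_detour}. The only small subtlety to keep in mind is that $\dist$ is asymmetric, so one must carefully use $\dist(b,z_n)\le\dist(b,x)+\dist(x,z_n)$ in the correct order to obtain the lower bound on $\eta(x)$ in part (i).
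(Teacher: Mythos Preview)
Your proof is correct and follows essentially the same route as the paper. For (i) the paper simply cites inequality~(\ref{eqn:ideal_tri_ineq}) with $x=b$ (using $\eta(b)=0$) to obtain $d(b,y)+\eta(y)\ge 0$, which is exactly the bound you derive by hand from the triangle inequality; for (ii) the paper, like you, adds $d(b,x)$ to both sides of Lemma~\ref{lem:triangle_detour} and passes to the definition of $H$.
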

\begin{proof}
\ref{itema}
From~(\ref{eqn:ideal_tri_ineq}), we get that $d(b,y)+\eta(y) \ge 0$,
for all $y$ in $X$.
We conclude that $H(\xi,\eta)$ is non-negative.

\ref{itemb}
By Lemma~\ref{lem:triangle_detour},
\begin{align*}
d(b,x) + \nu(x) \le d(b,x) + \eta(x) + H(\eta,\nu),
\qquad\text{for all $x\in X$}.
\end{align*}
It follows from this that $H(\xi,\nu) \le H(\xi,\eta) + H(\eta,\nu)$.
\end{proof}

By symmetrising the detour cost, the set of Busemann points can be equipped
with a metric. For $\xi$ and $\eta$ in $X_B(\infty)$, let
\begin{equation}\label{eq:3.4}
\delta(\xi,\eta) := H(\xi,\eta)+H(\eta,\xi).
\end{equation}
We call $\delta$ the \emph{detour metric}.\index{detour metric}
This construction appears in~\cite[Remark~5.2]{AGW-m}.
\begin{proposition}
\label{prop:delta_metric}
The function $\delta\colon X_B(\infty)\times X_B(\infty)\to [0,\infty]$
is a (possibly $\infty$-valued) metric.
\end{proposition}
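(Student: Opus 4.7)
The plan is to verify the four metric axioms for $\delta$ — symmetry, non-negativity, identity of indiscernibles, and the triangle inequality — each of which reduces directly to a result already established in this section. Symmetry is immediate from~(\ref{eq:3.4}). Non-negativity follows at once from Proposition~\ref{prop:H_properies}\ref{itema}. For the triangle inequality, I would apply Proposition~\ref{prop:H_properies}\ref{itemb} in both directions, namely $H(\xi,\nu) \le H(\xi,\eta) + H(\eta,\nu)$ and $H(\nu,\xi) \le H(\nu,\eta) + H(\eta,\xi)$, and then add the two; the usual extended-arithmetic conventions cover the case where one of the summands is $+\infty$.

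The only axiom with any real content is the identity of indiscernibles. For one direction, given $\xi \in X_B(\infty)$, I would take an almost-geodesic $\gamma$ converging to $\xi$. The second statement of Lemma~\ref{lem:along_geos} with $\eta = \xi$ yields $H(\xi,\xi) = \lim_{t\to\infty} d(b,\gamma(t)) + \xi(\gamma(t))$, while the first statement with $y = b$ identifies that limit as $\xi(b) = 0$ (every horofunction vanishes at the basepoint, by passage to the limit in $\distfn_z(b) = 0$). Hence $\delta(\xi,\xi) = 0$. For the converse, suppose $\delta(\xi,\eta) = 0$. Since each summand is non-negative, we must have $H(\xi,\eta) = H(\eta,\xi) = 0$. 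Applying Lemma~\ref{lem:triangle_detour} in both directions gives $\eta(x) \le \xi(x)$ and $\xi(x) \le \eta(x)$ for every $x \in X$, so $\xi = \eta$ as functions on $X$.

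No genuine obstacle is anticipated; the proposition amounts to packaging together Proposition~\ref{prop:H_properies}, Lemma~\ref{lem:triangle_detour}, and the Busemann half of Lemma~\ref{lem:along_geos}. The only subtlety worth flagging is the bookkeeping around the possibility that $\delta$ takes the value $+\infty$, but this causes no difficulty in any of the above steps.
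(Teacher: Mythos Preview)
Your proposal is correct and follows essentially the same route as the paper: symmetry, non-negativity, and the triangle inequality come from the definition and Proposition~\ref{prop:H_properies}, while the identity of indiscernibles is handled via Lemma~\ref{lem:triangle_detour} applied twice. The only cosmetic difference is that for $\delta(\xi,\xi)=0$ the paper cites Proposition~\ref{prop:zero_on_busemann}, whereas you invoke Lemma~\ref{lem:along_geos} directly---but the relevant direction of Proposition~\ref{prop:zero_on_busemann} is itself just that lemma, so the arguments are the same.
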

\begin{proof}
The only metric space axiom that does not follow from
Propositions~\ref{prop:zero_on_busemann} and~\ref{prop:H_properies},
and the symmetry of the definition of $\delta$
is that if $\delta(\xi,\eta)=0$ for Busemann points $\xi$ and $\eta$,
then these two points are identical. So, assume this equation holds.

By~\ref{itema} of Proposition~\ref{prop:H_properies},
both $H(\xi,\eta)$ and $H(\eta,\xi)$ are zero.
Applying Lemma~\ref{lem:triangle_detour} twice, we get that $\xi(x)=\eta(x)$
for all $x\in X$.
\end{proof}

The following proposition shows that each isometry of $X$ induces an isometry
on $X_B(\infty)$ endowed with the detour metric.
The independence of the base-point was observed in~\cite[Remark~5.2]{AGW-m}.
\begin{proposition}
\label{prop:isometry_detour_metric}
Let $\iso$ be an isometry from one possibly non-symmetric metric
space $(X,d)$ to another $(X',d')$, with base-points $b$ and $b'$ respectively.
Then, the detour metrics in $X$ and $X'$ are related by
\begin{align*}
\delta'(\iso\cdot\xi,\iso\cdot\eta)
   = \delta(\xi,\eta),
\text{\qquad for all $\xi,\eta\in X(\infty)$}.
\end{align*}
In particular, the detour metric does not depend on the base-point.
\end{proposition}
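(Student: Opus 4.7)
The plan is to reduce the statement directly to Proposition~\ref{prop:transformH}, which already records how $H$ transforms under an isometry. The key observation is that the extra terms appearing in that formula are antisymmetric under swapping $\xi$ and $\eta$, so they cancel when one symmetrises.

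Concretely, I would first apply Proposition~\ref{prop:transformH} to the ordered pair $(\xi,\eta)$ to obtain
\begin{align*}
H'(\iso\cdot\xi,\iso\cdot\eta)
   = \xi(\iso^{-1}(b')) + H(\xi,\eta) - \eta(\iso^{-1}(b')),
\end{align*}
and then apply it again with the roles of $\xi$ and $\eta$ swapped to obtain
\begin{align*}
H'(\iso\cdot\eta,\iso\cdot\xi)
   = \eta(\iso^{-1}(b')) + H(\eta,\xi) - \xi(\iso^{-1}(b')).
\end{align*}
Adding the two identities, the terms $\xi(\iso^{-1}(b'))$ and $\eta(\iso^{-1}(b'))$ cancel, and by the definition~\eqref{eq:3.4} of $\delta$ and $\delta'$ one reads off $\delta'(\iso\cdot\xi,\iso\cdot\eta)=\delta(\xi,\eta)$.

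For the ``in particular'' statement about base-point independence, I would specialise to the case $(X',d')=(X,d)$ with $\iso$ the identity map but with a different base-point $b'$ used on the target side. The same computation (already remarked upon just after Proposition~\ref{prop:transformH}) gives $H'(\xi,\eta)=\xi(b')+H(\xi,\eta)-\eta(b')$, and symmetrising once more yields $\delta'(\xi,\eta)=\delta(\xi,\eta)$.

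There is essentially no obstacle: all the work has been done in Proposition~\ref{prop:transformH}. The only thing to verify is the bookkeeping of signs, and that the symmetrisation genuinely causes the base-point-dependent terms to cancel — which it does because they enter linearly and with opposite signs in $H'(\iso\cdot\xi,\iso\cdot\eta)$ versus $H'(\iso\cdot\eta,\iso\cdot\xi)$.
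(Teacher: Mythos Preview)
Your argument is correct and is precisely the approach the paper takes; the paper's proof is in fact even terser, simply citing Proposition~\ref{prop:transformH} for the first part and specialising to $X=X'$, $d=d'$, $\iso=\mathrm{id}$ for the second, while you have spelled out the cancellation explicitly.
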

\begin{proof}
The first part follows from Proposition~\ref{prop:transformH}.
The second part then follows by taking $X=X'$ and $d=d'$,
with $f$ the identity map.
\end{proof}

\section{The detour cost for the Lipschitz metric}
\label{sec:thurston_detour}

We will now calculate the detour cost for Thurston's Lipschitz metric.
This result will be crucial for our study of the isometry
group in the next section.

If $\xi_j$ are a finite set of measured laminations pairwise having
zero intersection number, then we define their sum $\sum_j \xi_j$ to be the
measured lamination obtained by taking the union of the supports and
endowing it with the sum of the transverse measures.
A measured lamination is said to be~\emph{ergodic} if it is non-trivial
and cannot be written as a sum of projectively-distinct non-trivial
measured laminations.
Each measured lamination $\xi$ can be written in one way as the sum of a
finite set of projectively-distinct ergodic measured laminations.
We call these laminations the \emph{ergodic components} of $\xi$.\index{ergodic components}

Let $\blam\in\measuredlams$ be expressed as $\blam= \sum_j \blam_j$ in terms
of its ergodic components.
For $\slam\in\measuredlams$, we write $\slam\ll\blam$
if $\slam$ can be expressed as $\slam = \sum_j f_j \blam_j$, where each $f_j$
is in $\Rplus$.

Recall that $\Psi_\blam$ denotes the horofunction associated to the projective
class of the measured lamination $\blam$.
\begin{theorem}
\label{thm:detourcost}
Let $\slam$ and $\blam$ be measured laminations. If $\slam\ll\blam$, then
\begin{align*}
H(\Psi_\blam,\Psi_\slam) =
      \log \sup_{\eta\in\measuredlams} \frac{i(\blam,\eta)}{\hlength_b(\eta)}
     + \log\max_j(f_j)
     - \log\sup_{\eta\in\measuredlams} \frac{i(\slam,\eta)}{\hlength_b(\eta)},
\end{align*}
where $\slam$ is expressed as $\slam = \sum_j f_j \blam_j$
in terms of the ergodic components $\blam_j$ of $\blam$.
If $\slam\not\ll\blam$, then $H(\Psi_\blam,\Psi_\slam) = +\infty$.
\end{theorem}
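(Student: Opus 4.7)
The plan is to reduce $H(\Psi_\blam,\Psi_\slam)$ to a limit along a stretch line converging to $\Psi_\blam$ and then carry out an explicit asymptotic calculation. By Theorem~\ref{thm:busemann} every horofunction is a Busemann point, so Lemma~\ref{lem:along_geos} yields
\begin{align*}
H(\Psi_\blam,\Psi_\slam) = \lim_{t\to\infty}\bigl(\stretchdist(b,\gamma(t)) + \Psi_\slam(\gamma(t))\bigr)
\end{align*}
for any almost-geodesic $\gamma$ converging to $\Psi_\blam$. Following the construction in the proof of Theorem~\ref{thm:busemann}, I pick a maximal uniquely-ergodic lamination $\blam'$ with $i(\blam',\blam)>0$ and $i(\blam',\slam)>0$ (possible by density of maximal uniquely-ergodic classes in $\projmeasuredlams$), let $\overline{\blam'}$ be a completion, and set $\gamma(t):=\horocyclic^{-1}_{\overline{\blam'}}(e^tF)$, where $F\in\measuredfoliations$ is the measured foliation associated to $\blam$. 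By~\cite{papadopoulos_extension} this stretch line converges in $\thurstoncompact$ to $[\blam]$, and hence to $\Psi_\blam$ in the horofunction compactification.

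To evaluate the limit I control the length functional along $\gamma$ asymptotically. Since $\gamma(t)\to[\blam]$ in $\thurstoncompact$, there exist scalars $\lambda_t>0$ with $\lambda_t\gamma(t)\to\blam$ as geodesic currents. Bonahon's joint continuity of intersection together with Lemma~\ref{lem:uniform_inter} gives $\lambda_t\ell_{\gamma(t)}(\cdot)\to i(\blam,\cdot)$ uniformly on the compact set $\unitlams$. Factoring out $\lambda_t$ in the two suprema, this yields
\begin{align*}
\stretchdist(b,\gamma(t)) &= -\log\lambda_t + \log\lfactor(\blam) + o(1),\\
\Psi_\slam(\gamma(t)) &= \phantom{-}\log\lambda_t - \log\lfactor(\slam) + \log M + o(1),
\end{align*}
where $M:=\sup_{\eta\in\unitlams} i(\slam,\eta)/i(\blam,\eta)$. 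Summing, the $\lambda_t$ terms cancel and the theorem is reduced to identifying $M$. As a consistency check, the analogous computation with $\slam=\blam$ gives $M=1$ and recovers $H(\Psi_\blam,\Psi_\blam)=0$ from Proposition~\ref{prop:zero_on_busemann}.

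When $\slam\ll\blam$, write $\slam=\sum_jf_j\blam_j$ along the ergodic components of $\blam$; then $i(\slam,\eta)/i(\blam,\eta)$ is a convex combination of the $f_j$, so $M\le\max_j(f_j)$, and the reverse inequality is obtained by approximating the component $\blam_{j^*}$ attaining $\max_jf_j$ by simple closed curves whose $\blam$-intersection concentrates on $\blam_{j^*}$. The pointwise inequality $i(\slam,\eta)\le(\max_jf_j)\,i(\blam,\eta)$ also controls contributions from $\eta$ near the zero set of $i(\blam,\cdot)$ on $\unitlams$ (such as multiples of $\blam$ itself) and is what makes the passage of sup through the limit in $\Psi_\slam(\gamma(t))$ rigorous; establishing this interchange despite the vanishing of the limit denominator is the main technical obstacle. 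If instead $\slam\not\ll\blam$, some ergodic component of $\slam$ has support outside $\mathrm{supp}\,\blam$; a simple closed curve $\eta$ crossing this extra component but disjoint from $\mathrm{supp}\,\blam$ satisfies $i(\slam,\eta)>0$ and $i(\blam,\eta)=0$, so $\lambda_t\ell_{\gamma(t)}(\eta)\to0$ while $i(\slam,\eta)$ stays bounded below, making $\sup_\eta i(\slam,\eta)/\ell_{\gamma(t)}(\eta)$ grow strictly faster than $\lambda_t^{-1}$ and the detour sum diverge to $+\infty$.
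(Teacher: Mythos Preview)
Your overall strategy---reduce to a limit along a stretch line converging to $[\blam]$ and identify the limit with $\log\lfactor(\blam)+\log M-\log\lfactor(\slam)$ where $M=\sup_\eta i(\slam,\eta)/i(\blam,\eta)$---is exactly the paper's, but there are two genuine gaps.

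First, the interchange of supremum and limit in $\Psi_\slam(\gamma(t))$ is not justified by the pointwise inequality $i(\slam,\eta)\le M\,i(\blam,\eta)$ together with uniform convergence of $\lambda_t\ell_{\gamma(t)}$ alone. That inequality reduces the problem to showing $\sup_{\eta\in\unitlams} i(\blam,\eta)/\big(\lambda_t\ell_{\gamma(t)}(\eta)\big)\to 1$, but uniform convergence only gives $\lambda_t\ell_{\gamma(t)}(\eta)\ge i(\blam,\eta)-\epsilon_t$, and since $\min_{\eta\in\unitlams}\lambda_t\ell_{\gamma(t)}(\eta)\to 0$ there is no control on the ratio near the zero set of $i(\blam,\cdot)$. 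The paper closes this gap with Th\'eret's one-sided estimate $e^{-t}\ell_{\gamma(t)}(\eta)\ge i(F,\eta)=i(\blam,\eta)$, which immediately bounds the supremum above by $\sup_\eta i(\slam,\eta)/i(\blam,\eta)$. Your ``consistency check'' can in fact be turned into an alternative fix: since $\Psi_\blam$ is Busemann, $\stretchdist(b,\gamma(t))+\Psi_\blam(\gamma(t))\to 0$, and unwinding this with your asymptotic for $\stretchdist(b,\gamma(t))$ yields precisely $\sup_\eta i(\blam,\eta)/\big(\lambda_t\ell_{\gamma(t)}(\eta)\big)\to 1$. But this must be used as an input, not presented as a check after the fact.

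Second, the identification $M=\max_j f_j$ (and $M=+\infty$ when $\slam\not\ll\blam$) is where most of the work lies, and you have glossed over it. The claim that one can ``approximate the component $\blam_{j^*}$ by simple closed curves whose $\blam$-intersection concentrates on $\blam_{j^*}$'' is precisely Lemma~\ref{lem:cross_one}, whose proof is a substantial measure-theoretic and foliation argument. Your treatment of the case $\slam\not\ll\blam$ is also incorrect as stated: it is not true that some ergodic component of $\slam$ must have support outside $\operatorname{supp}\blam$. If $\blam$ is minimal but not uniquely ergodic and $\slam$ is a projectively distinct ergodic measure on the same support, then $\slam\not\ll\blam$ yet $\operatorname{supp}\slam=\operatorname{supp}\blam$, so no curve can cross $\slam$ while avoiding $\blam$. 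The paper handles all cases uniformly through Lemma~\ref{lem:cross_one} and Lemma~\ref{lem:max_intersection_ratio}; you need an argument of comparable strength here.
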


\begin{remark}
Here, and in similar situations, we interpret the supremum to be over
the set where the ratio is well defined, that is, excluding values of $\eta$
for which both the numerator and the denominator are zero.
\end{remark}

The proof of this theorem will require several lemmas.

Consider a measured foliation $(F,\mu)$.
Each connected component of the complement in $S$ of the union of the compact
leaves of $F$ joining singularities is either an annulus swept out by
closed leaves or a minimal component in which all leaves are dense.
We call the latter components the \emph{minimal domains}
of $F$.\index{minimal domains}

Denote by $\sing F$ the set of singularities of $F$.
A curve $\curve$ is \emph{quasi-transverse}\index{quasi-transverse}
to $F$ if each connected component
of $\curve\backslash\sing F$ is either a leaf or is transverse to $F$,
and, in a neighbourhood of each singularity, no transverse arc lies in
a sector adjacent to an arc contained in a leaf.
Quasi-transverse curves minimise the total variation of the transverse measure
in their homotopy class, in other words, $\mu(\curve) = i((F,\mu),\curve)$,
for every curve $\curve$ quasi-transverse to $(F,\mu)$;
see~\cite{fathi_laudenbach_poenaru}.

\begin{lemma}
\label{lem:alongminimal}
Let $\epsilon>0$, and let $x_1$ and $x_2$ be two points on the boundary
of a minimal domain $D$ of a measured foliation $(F,\mu)$.
Then, there exists a curve segment $\sigma$ going from $x_1$ to $x_2$
that is contained in $D$ and quasi-transverse to $F$, such that
$\mu(\sigma)<\epsilon$. Moreover, $\sigma$ can be chosen to have a non-trivial
initial segment and terminal segment that are transverse to $F$.
\end{lemma}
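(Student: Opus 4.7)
The plan is to build $\sigma$ as a concatenation of three pieces: a short transverse arc $\tau_1$ going from $x_1$ into $D$, a long arc $\ell$ contained in a single leaf of $F|_D$, and a short transverse arc $\tau_2$ returning to $x_2$. Since the middle piece lies on a leaf, it contributes zero to $\mu(\sigma)$, so the entire $\mu$-measure of $\sigma$ is carried by $\tau_1$ and $\tau_2$; I would arrange that each has $\mu$-measure less than $\epsilon/2$, which is possible because the transverse measure of a measured foliation is non-atomic at regular points and we can pick $\tau_1,\tau_2$ arbitrarily short (sliding $x_i$ slightly along the boundary leaf if needed to land near a regular point). The non-trivial transverse arcs $\tau_1,\tau_2$ simultaneously serve as the required transverse initial and terminal segments.

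The crucial geometric input is the defining feature of a minimal domain: every leaf of $F|_D$ is dense in $D$. Hence, once $\tau_1$ and $\tau_2$ are chosen with interior endpoints $p_1,p_2\in\operatorname{Int}(D)$, the half-leaf through $p_1$ enters every neighborhood of $p_2$. After possibly shifting $p_1$ slightly along $\tau_1$, this half-leaf meets $\tau_2$ transversely at an interior point $p_2'$; trimming $\tau_2$ to end at $p_2'$ and concatenating yields a curve from $x_1$ to $x_2$ with total $\mu$-measure strictly less than $\epsilon$.

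The main obstacle is the verification of quasi-transversality. The definition requires each connected component of $\sigma\setminus\sing F$ to be entirely transverse or entirely contained in a leaf, so the regular-point junctions at $p_1$ and $p_2'$ between $\tau_i$ and $\ell$ are not allowed as stated. To fix this, I would reroute near each junction so that the switching point is moved to a singularity of $F$ in $\overline{D}$: the separatrices emanating from each singularity are themselves dense in $D$ by minimality, so $\ell$ can be replaced by a concatenation of separatrix arcs meeting only at singularities, and the join with each $\tau_i$ can be pulled back along $\tau_i$ (which has zero-measure room to spare within our $\epsilon/2$ budget) until the transition occurs at a singularity rather than at a regular point. The sector condition at each such singularity is then arranged by choosing, among the finitely many separatrices emanating from it, one that lies in a sector non-adjacent to the sector occupied by the incoming transverse arc. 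Since every inserted or substituted arc is contained in a leaf, it contributes zero $\mu$-measure, so the bound $\mu(\sigma)<\epsilon$ survives the adjustment, and $\tau_1,\tau_2$ remain non-trivial transverse initial and terminal segments.
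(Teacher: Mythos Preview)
Your overall architecture---short transverse arc $\tau_1$ out of $x_1$, long leaf arc, short transverse arc $\tau_2$ into $x_2$---is exactly the paper's, and you correctly isolate the obstacle: the junctions between the $\tau_i$ and the leaf piece occur at regular points, so the concatenation is not quasi-transverse as it stands.

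The gap is in your repair. You propose to ``pull back along $\tau_i$ \dots\ until the transition occurs at a singularity rather than at a regular point,'' but a short transverse arc issuing from the prescribed boundary point $x_i$ has no reason to pass through any singularity of $F$; singularities are isolated and the arc's initial point is pinned at $x_i$, so sliding the junction along $\tau_i$ never reaches one. Replacing the middle piece by a concatenation of separatrix arcs does not help either: a separatrix still has to be joined to $\tau_i$ somewhere, and that join is again at a regular point on $\tau_i$. Thus the transverse--leaf switch remains at a regular point, and quasi-transversality still fails. (Your parenthetical ``sliding $x_i$ slightly along the boundary leaf'' is also not permitted: $x_1$ and $x_2$ are given.)

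The paper's repair is different and does not attempt to place junctions at singularities. After arranging that the half-leaf $\gamma$ first returns to $\tau_1$ at $\tau_1(s_1)$ and then hits $\tau_2$ at $\tau_2(s_2)$ with $s_1\neq s_2$, it splits into two cases according to whether $\tau_1[0,s_1]$ and $\tau_2[0,s_2]$ leave $\gamma$ on opposite sides or on the same side. In the opposite-side case the $Z$-shaped concatenation $\tau_1[0,s_1]*\gamma*\tau_2[s_2,0]$ can be perturbed to a curve \emph{everywhere transverse} to $F$ (push $\gamma$ slightly off itself; the opposite-side condition makes the push extend across both junctions), so quasi-transversality holds trivially. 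In the same-side case that push is obstructed; the paper then invokes \cite[Theorem~5.4]{fathi_laudenbach_poenaru} to replace $\gamma$ by a parallel arc $\gamma'$ lying in a finite union of leaves and singularities, with endpoints strictly between $x_i$ and the original junction on each $\tau_i$ (the hypothesis $s_1\neq s_2$ rules out a foliated rectangle and guarantees $\gamma'\neq\gamma$), after which the required perturbation goes through.
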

\begin{proof}
Let $\tau_1$ and $\tau_2$ be two non-intersecting transverse arcs in $D$
starting at $x_1$ and $x_2$ respectively, parameterised so that
$\mu(\tau_1[0,s]) = \mu(\tau_2[0,s]) = s$, for all $s$.
We also require that the lengths of $\tau_1$ and $\tau_2$
with respect to $\mu$ are less than $\epsilon/2$.

Choose a point on $\tau_1$ and a direction, either left or right, such that
the chosen half-leaf $\gamma$ is infinite, that is, does not hit a singularity.
Since $D$ is a minimal component, $\gamma[0,\infty)$ is dense in $D$.
Let $t_2$ be the first time $\gamma$ intersects $\tau_2$, and let $t_1$
be the last time before $t_2$ that $\gamma$ intersects $\tau_1$.

Let $s_1$ and $s_2$ be such that $\tau_1(s_1)=\gamma(t_1)$ and
$\tau_2(s_2)=\gamma(t_2)$. We may assume that $s_1$ is different from $s_2$,
for otherwise, continue along $\gamma$ until the next time $t_3$ it intersects
$\tau_1[0,s_1)\union\tau_2[0,s_2)$.
If the intersection is with $\tau_1[0,s_1)$, then we take the leaf segment
$\gamma[t_2,t_3)$ with the reverse orientation.
If the intersection is with $\tau_2[0,s_2)$, then we take the leaf segment
$\gamma[t_1,t_3)$. In either case, after redefining $s_1$ and $s_2$ so that
our chosen leaf segment starts at $\tau_1(s_1)$ and ends at $\tau_2(s_2)$,
we get that $s_1\neq s_2$.

If $\tau_1[0,s_1]$ and $\tau_2[0,s_2]$ leave $\gamma$ on opposite sides, then,
by perturbing the concatenation $\tau_1[0,s_1]*\gamma[t_1,t_2]*\tau_2[s_2,0]$,
we can find a curve segment $\sigma$ in $D$
that passes through $x_1$ and $x_2$ and is transverse to $F$ with weight
$\mu(\sigma) = s_1 + s_2 < \epsilon$; see Figure~\ref{fig:figA}.

\begin{figure}
\input{figureA.pstex_t}
\caption{}
\label{fig:figA}
\end{figure}

\begin{figure}
\input{figureB.pstex_t}
\caption{}
\label{fig:figB}
\end{figure}

If $\tau_1[0,s_1]$ and $\tau_2[0,s_2]$ leave $\gamma$ on the same side,
then we apply~\cite[Theorem 5.4]{fathi_laudenbach_poenaru} to get an arc
$\gamma'$ parallel to $\gamma$, contained in a union of a finite number of
leaves and singularities, with endpoints $\gamma'(t_1)=\tau_1(s_1')$
and $\gamma'(t_2)=\tau_2(s_2')$ contained in $\tau_1[0,s_1)$
and $\tau_2[0,s_2)$, respectively (see Figure~\ref{fig:figB}).
Since $s_1\neq s_2$, the points $x_1$, $x_2$, $\tau_2(s_2)$, and $\tau_1(s_1)$
do not form a rectangle foliated by leaves.
Hence the endpoints of $\gamma'$ are not $x_1$ and $x_2$.
As before, by perturbing $\tau_1[0,s_1']*\gamma'[t_1,t_2]*\tau_2[s_2',0]$,
it is easy to construct a curve with the
required properties.
\end{proof}

\begin{lemma}
\label{lem:cross_one}
Let $\xi_j; j\in\{0,\dots,J\}$ be a finite set of ergodic measured laminations
that pairwise have zero intersection number, such that no two are in the same
projective class, and let $C>0$.
Then, there exists a curve $\alpha\in\curves$ such that
$i(\xi_0,\alpha) > C i(\xi_j,\alpha)$ for all $j\in \{1,\dots,J\}$.
\end{lemma}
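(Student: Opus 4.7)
Since each $\xi_j$ is ergodic, its support $|\xi_j|$ is a minimal sublamination of $S$; and two ergodic measured laminations with zero intersection number cannot transversely cross, so their minimal supports are either equal or disjoint. Partition $\{1,\dots,J\}$ accordingly into $I=\{j:|\xi_j|=|\xi_0|\}$ and $I^c=\{j:|\xi_j|\cap|\xi_0|=\emptyset\}$. The indices in $I^c$ are essentially free: any simple closed curve $\alpha$ routed inside a neighborhood of $|\xi_0|$ disjoint from $\bigcup_{j\in I^c}|\xi_j|$ already gives $i(\alpha,\xi_j)=0$ for every $j\in I^c$, which is much stronger than needed.

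The substantive case is $I$, where $\xi_0$ and the $\xi_j$ (for $j\in I$) are distinct ergodic transverse measures on the common minimal lamination $N:=|\xi_0|$. The space of transverse measures on $N$ is a finite-dimensional simplex whose extreme rays are exactly the ergodic measures, so $\xi_0$ and $\{\xi_j\}_{j\in I}$ are linearly independent in the ambient real vector space $V_{\R}$. Hence there exists a linear functional $\phi$ on $V_{\R}$, nonnegative on the cone of transverse measures, with $\phi(\xi_0)>0$ and $\phi(\xi_j)=0$ for all $j\in I$---concretely, the coordinate dual to $\xi_0$ in the simplex. The plan is then to approximate $\phi$, up to positive scaling, by functionals of the form $\eta\mapsto i(\alpha,\eta)|_V$ for simple closed curves $\alpha$ inside the prescribed neighborhood of $N$. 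Using density of $\Rplus\cdot\curves$ in $\measuredlams$ together with continuity of $i(\cdot,\cdot)$, this produces a sequence $\alpha_n\in\curves$ with $i(\alpha_n,\xi_j)/i(\alpha_n,\xi_0)\to 0$ for each $j\in I$, and any sufficiently large $n$ supplies the desired $\alpha$.

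The central difficulty is that this approximation is never exact: a simple closed curve that transversely crosses $N$ has strictly positive intersection with every ergodic transverse measure on $N$, so $\phi$ can only be approached, never attained, and one must argue the ratios $i(\alpha,\xi_0)/i(\alpha,\xi_j)$ can actually be driven to $+\infty$. This is where the mutual singularity of distinct ergodic measures on $N$ enters essentially. A concrete implementation uses Lemma~\ref{lem:alongminimal}: each $\xi_j$ (for $j\in I$) is represented by a measured foliation $F_j$ whose minimal domain contains $|\xi_0|$, and the lemma furnishes quasi-transverse arcs in this domain of arbitrarily small $\xi_j$-transverse measure. Concatenating such arcs --- one per $j\in I$ --- together with a fixed short transverse crossing of $|\xi_0|$, and closing up through the complement of $\bigcup_{j\in I^c}|\xi_j|$, then yields a simple closed curve $\alpha$ realizing the required inequalities.
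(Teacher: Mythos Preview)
Your plan correctly identifies the two regimes (same support versus disjoint support) and the role of mutual singularity, but the actual construction has a genuine gap in the crucial case $I\neq\emptyset$. The ``approximation'' route does not go through: the target functional $\phi$ cannot be written as $i(\mu,\cdot)|_V$ for any $\mu\in\measuredlams$, because any $\mu$ with $i(\mu,\xi_0)>0$ must cross $|\xi_0|=|\xi_j|$ transversely and hence satisfies $i(\mu,\xi_j)>0$ as well; so density of $\Rplus\cdot\curves$ in $\measuredlams$ gives no information about approaching $\phi$. Your fallback via Lemma~\ref{lem:alongminimal} also fails as stated: that lemma produces, for each fixed $j\in I$, an arc $\sigma_j$ with small $\mu_j$-measure, but says nothing about $\mu_{j'}(\sigma_j)$ for $j'\neq j$ or about $\mu_0(\sigma_j)$. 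Concatenating one such arc per $j$ therefore does not yield a curve with small $\mu_j$-measure for \emph{all} $j\in I$ simultaneously, which is what you need. (Separately, when $\xi_0$ is a simple closed curve the ``neighbourhood of $|\xi_0|$'' idea gives only curves isotopic to $\xi_0$, hence $i(\xi_0,\alpha)=0$; this case needs its own argument.)

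The paper's proof exploits mutual singularity in a sharper, localised way. Working with the single measured foliation $(F,\mu)$ associated to $\xi=\sum_j\xi_j$, one takes a transverse arc $I'$ inside the minimal domain carrying $\mu_0$ and uses mutual singularity of $\mu_0$ and $\mu^c:=\sum_{j\ge1}\mu_j$ \emph{on that arc} to find a subinterval $I$ with $\mu_0(I)>C\,\mu^c(I)$. One then follows leaves of $F$ from $I$ back to $I$ to manufacture a finite family $Z$ of closed transverse curves whose associated subintervals exhaust most of $I$; since the curves in $Z$ collectively satisfy $\sum_\beta i(\beta,F_0)\big/\sum_\beta i(\beta,F^c)\approx\mu_0(I)/\mu^c(I)>C$, an averaging inequality (max $\ge$ weighted average) produces one $\beta\in Z$ with $i(\beta,\xi_0)>C\,i(\beta,\xi_j)$ for every $j$. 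The case where $\xi_0$ is a curve is handled separately, and it is there---crossing the other minimal domains cheaply---that Lemma~\ref{lem:alongminimal} is actually used. The missing idea in your proposal is precisely this: pass to a single transverse arc where the \emph{total} competing measure is already dominated, and then build curves that essentially live over that arc.
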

\begin{proof}

\begin{figure}
\input{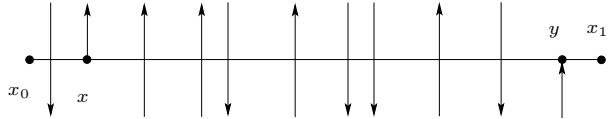}
\caption{Diagram for the proof of Lemma~\ref{lem:cross_one}.}
\label{fig:crossings}
\end{figure}

\begin{figure}
\input{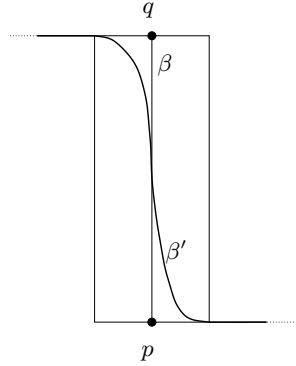}
\caption{Diagram for the proof of Lemma~\ref{lem:cross_one}.}
\label{fig:rectangle}
\end{figure}

Since the $\xi_j$ do not intersect, we may combine them to form a
measured lamination $\xi:=\sum_j \xi_j$.

Consider first the case where $\xi_0$ is not a curve.
Take a representative $(F,\mu)$ of the element of $\measuredfoliations$
corresponding to $\xi$ by the well known bijection between $\measuredlams$
and $\measuredfoliations$. The decomposition of $\xi$ into a sum of
ergodic measured laminations corresponds to the decomposition of $\mu$
into a sum of partial measured foliations $(F,\mu_j)$.
Each $\mu_j$ is supported on either an annulus of closed leaves of $F$
(if $\xi_j$ is a curve), or a minimal domain.
For each $j$, let $F_j:=(F,\mu_j)$.

Let $I'$ be a transverse arc contained in the interior of the minimal
domain on which $\mu_0$ is supported.
Write $\mu^c := \sum_{j=1}^{J} \mu_j = \mu - \mu_0$ and $F^c:=(F,\mu^c)$.
The measures $\{\mu_j\}$ are mutually singular, and so there is a Borel
subset $X$ of $I'$ such that $\mu_0(X)=\mu_0(I')$ and $\mu^c(X)=0$.
But $X$ may be approximated from above by open sets of $I'$:
\begin{align*}
\mu^c(X)
   = \inf\{ \mu^c(U) \mid \text{$X\subset U\subset I'$ and $U$ is open} \}.
\end{align*}
Therefore, we can find an open set $U$ such that $X\subset U\subset I'$ and
$\mu^c(U) < \mu_0(I') / C$. Since $U$ is open, it is the disjoint union of a
countable collection of open intervals.
Since $\mu_0(U)=\mu_0(I')>\mu^c(U) C$, at least one of these intervals $I$
must satisfy $\mu_0(I) > \mu^c(I) C$.
Choose $\epsilon >0$ such that $\mu_0(I)-\epsilon > \mu^c(I) C$.

For $x$ and $y$ in $I$, we denote by $[x,y]$ the closed sub-arc of $I$
connecting $x$ and $y$. Open and half-open sub-arcs are denoted in an
analogous way. Let $x_0$ and $x_1$ be the endpoints of $I$.

Choose a point $x\in I$ such that $\leb[x_0,x] < \epsilon/3$ and there is
an infinite half-leaf $\gamma$ of $F$ starting from $\gamma(0)=x$.
So, we may go along $\gamma$ until we reach a point $y:=\gamma(t)$ in $I$
such that $\leb[y,x_1] < \epsilon/3$.

Let $B$ be the set of intervals $[p,q)$ in $I$ such that the finite leaf
segment $\gamma[0,t]$ crosses $I$ at $p$ and at $q$, the two crossings
are in the same direction, and $\gamma$ does not cross $I$ in the interval
$(p,q)$.

Consider an element $[p,q)$ of $B$. Assume that $\gamma$ passes through
$p$ before it passes through $q$, that is, $\gamma(t_p)=p$ and $\gamma(t_q)=q$
for some $t_p$ and $t_q$ in $[0,t]$, with $t_p<t_q$.
(The other case is handled similarly.)
Let $\scurv$ be the closed curve consisting of $\gamma[t_p,t_q]$ concatenated
with the sub-interval $[p,q]$ of $I$. Since $\gamma$ contains no singular
point of $F$, there exists a narrow rectangular neighbourhood of
$\gamma[t_p,t_q]$ not containing
any singular point of $F$, and so we may perturb $\scurv$ to get a closed curve
$\scurv'$ that is transverse to $F$ (see Figure~\ref{fig:rectangle}).
We have
\begin{align*}
i(\scurv,F_j) = i(\scurv',F_j) = \mu_j(\scurv') = \mu_j[p,q),
\end{align*}
for all $j\in\{0,\dots,J\}$.
The second equality uses the fact that $\scurv'$ is transverse to $F_j$,
and hence quasi-transverse.
Let $Z$ be the set of curves $\scurv$ obtained in this way from the elements
$[p,q)$ of $B$.

The set $[x,y) \backslash\cup B$ is composed of a finite number of intervals
of the form $[r,s)$, where $\gamma[0,t]$ crosses $I$ at $r$ and $s$ in different
directions and does not cross the interval $(r,s)$.
From $\gamma(t)$, we continue along $\gamma$ until the first time $t'$
that $\gamma$ crosses one of the intervals $[r,s)$ comprising
$[x,y) \backslash\cup B$.
The direction of this crossing will be the same as either that at $r$ or that
at $s$. We assume the former case; the other case is similar.
As before, we have that the curve $\beta$ formed from the segment of $\gamma$
going from $r$ to $\gamma(t')$, concatenated with the sub-arc $[r,\gamma(t')]$
of $I$ satisfies $i(\scurv,F_j) = \mu_j[r,\gamma(t'))$, for all $j$.
We add $[r,\gamma(t'))$ to the set $B$, and $\scurv$ to the set $Z$.
Observe that $[x,y)\backslash \cup B$ remains composed of the same number
of intervals of the same form, only now one of them is shorter.

We continue in this manner, adding intervals to $B$ and curves to $Z$.
Since $\gamma$ crosses $I$ on a dense subset of $I$, the maximum $\mu$-measure
of the component intervals of $[x,y)\backslash \cup B$ can be made as
small as we wish. But there are a fixed number of these components, and so
we can make $\mu([x,y)\backslash \cup B)$ as small as we wish.
We make it smaller than $\epsilon/3$.

We have
\begin{align*}
\max_{\scurv\in Z} \frac{i(\scurv,F_0)}{i(\scurv,F^c)}
   &\ge \frac{\sum_{\scurv\in Z}i(\scurv,F_0)}{\sum_{\scurv\in Z}i(\scurv,F^c)} \\
   &= \frac{\mu_0(\cup B)}{\mu^c(\cup B)} \\
   &\ge \frac{\mu_0(I)-\epsilon}{\mu^c(I)} \\
   &> C.
\end{align*}
So, some curve $\scurv\in Z$ satisfies
$i(\scurv,\xi_0) = i(\scurv,F_0) > C i(\scurv,F^c) \ge C i(\scurv, \xi_j)$,
for all $j\in\{1,\dots,J\}$.

Now consider the case where $\xi_0$ is a curve.
A slight adaption of the proof
of~\cite[Proposition 3.17]{fathi_laudenbach_poenaru} shows that there is a 
curve $\alpha\in\curves$ having positive intersection number with
$\xi_0$ and zero intersection number with every other curve in the support
of $\xi$.

By~\cite[Proposition 5.9]{fathi_laudenbach_poenaru}, there exists a
measured foliation $(F,\mu)$ representing the element of $\measuredfoliations$
associated to $\xi$ such that $\alpha$ is transverse to $F$ and avoids
its singularities.

Again we use the decomposition of $\mu$ into a sum of mutually-singular
partial measured foliations $(F,\mu_j)$, corresponding to the $\xi_j$.
Since $\alpha$ is transverse to $F$, we have, for each $j$, that
$i(\alpha,\xi_j) = \mu_j(\alpha)$, where $\mu_j(\alpha)$ denotes the total
mass of $\alpha$ with respect to the transverse measure $\mu_j$.
It follows that $\alpha$ crosses the annulus $A$ associated to $\xi_0$
at least once, but never enters any of the annuli associated to the other
curves in the support of~$\xi$.

Consider the following directed graph.
We take a vertex for every minimal domain of $F$
through which $\alpha$ passes, and for every time $\alpha$ crosses $A$.
So, there is at most one vertex associated to each minimal domain but there
may be more than one associated to~$A$.
As we move along the curve $\alpha$, we get a cyclic sequence of these
vertices. We draw a directed edge between each vertex of this cyclic
sequence and the succeeding one, and label it with the point of $\curves$
where $\alpha$ leaves the minimal domain or annulus and enters the next.
There may be more than one directed edge between a pair of vertices,
but each will have a different label.

The curve $\alpha$ induces a circuit in this directed graph.
Choose a simple sub-circuit $c$ that passes through at least
one vertex associated to a crossing of~$A$.
Here, simple means that no vertex is visited more than once.

Construct a curve in $S$ as follows.
For each vertex passed through by $c$ associated to a crossing of $A$,
take the associated segment of $\alpha$. For each
vertex passed through by $c$ associated to a minimal domain, choose $\epsilon$
to be less than the height of $A$ divided by $C$, and take the curve
segment given by Lemma~\ref{lem:alongminimal} passing through the minimal
domain, joining the points labeling the incoming and outgoing directed edges.
When we concatenate all these curve segments, we get a curve $\alpha'$
that passes through $A$, and that is quasi-transverse to $F$. Furthermore,
$i(\xi_j,\alpha') <\epsilon$ for each non-curve component $\xi_j$ of $\xi$,
and $i(\xi_j,\alpha')=0$ for each curve component different from $\xi_0$.
The conclusion follows.
\end{proof}

\begin{lemma}
\label{lem:max_intersection_ratio}
Let $\slam$ and $\blam$ be measured laminations. If $\slam\ll\blam$, then
\begin{align*}
\sup_{\eta\in\measuredlams} \frac{i(\slam,\eta)}{i(\blam,\eta)}
   = \max_j(f_j),
\end{align*}
where $\slam$ is expressed as $\slam = \sum_j f_j \blam_j$
in terms of the ergodic components $\blam_j$ of $\blam$.
If $\slam\not\ll\blam$, then the supremum is $+\infty$.
\end{lemma}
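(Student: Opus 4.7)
The plan is to split into two main cases, one for each direction of the dichotomy $\slam \ll \blam$ or $\slam \not\ll \blam$.

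For the case $\slam \ll \blam$, the upper bound is immediate from the bilinearity of $i$: writing $\slam = \sum_j f_j \blam_j$, we have
\begin{align*}
i(\slam,\eta) = \sum_j f_j \, i(\blam_j,\eta) \le \max_j(f_j) \sum_j i(\blam_j,\eta) = \max_j(f_j)\, i(\blam,\eta),
\end{align*}
since $i(\blam,\eta)=\sum_j i(\blam_j,\eta)$. For the matching lower bound, let $k$ be an index maximising $f_k$, and apply Lemma~\ref{lem:cross_one} to the family $\{\blam_k\}\cup\{\blam_j : j\neq k\}$: these are pairwise non-intersecting (being ergodic components of a single measured lamination) and projectively distinct. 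For any $C>0$ the lemma yields a simple closed curve $\alpha$ with $i(\blam_k,\alpha) > C \, i(\blam_j,\alpha)$ for each $j\neq k$. Dividing the formula for $i(\slam,\alpha)/i(\blam,\alpha)$ by $i(\blam_k,\alpha)$ and letting $C\to\infty$, the ratio tends to $f_k=\max_j(f_j)$.

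For the case $\slam\not\ll\blam$, I would split further based on whether $i(\slam,\blam)=0$. If $i(\slam,\blam)>0$, then some ergodic component $\blam_j$ of $\blam$ satisfies $i(\slam,\blam_j)>0$; choosing $\eta=\blam_j$ gives $i(\blam,\eta)=0$ (all components $\blam_k$ have pairwise zero intersection and self-intersection zero), while $i(\slam,\eta)>0$, so the ratio is $+\infty$ and the sup is $+\infty$. If instead $i(\slam,\blam)=0$, then decomposing $\slam=\sum_i g_i\slam_i$ into ergodic components, every $\slam_i$ has zero intersection with every $\blam_j$. Because $\slam\not\ll\blam$, there is an ergodic component $\slam_0$ of $\slam$ whose projective class differs from every $[\blam_j]$. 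The family $\{\slam_0,\blam_1,\dots,\blam_J\}$ then consists of pairwise non-intersecting, projectively distinct ergodic measured laminations, so Lemma~\ref{lem:cross_one} provides, for any $C>0$, a curve $\alpha$ with $i(\slam_0,\alpha)>C\, i(\blam_j,\alpha)$ for all $j$. Then $i(\slam,\alpha)\ge g_0\, i(\slam_0,\alpha)$ and $i(\blam,\alpha)\le J\max_j i(\blam_j,\alpha) < J\, i(\slam_0,\alpha)/C$, giving $i(\slam,\alpha)/i(\blam,\alpha)\ge g_0 C/J\to\infty$.

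The main subtlety is the $\slam\not\ll\blam$ analysis: one must prevent the naive candidate $\eta=\slam_0$ (which gives $0/0$) and account for the possibility that $\slam_0$ might transversely meet some $\blam_j$, which would obstruct a direct application of Lemma~\ref{lem:cross_one}. The case split on whether $i(\slam,\blam)$ vanishes cleanly resolves this: non-vanishing intersection is immediately exploited by testing against an ergodic component of $\blam$, while vanishing intersection guarantees the hypotheses of Lemma~\ref{lem:cross_one} are in force.
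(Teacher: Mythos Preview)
Your proof is correct and follows essentially the same approach as the paper: both rely on Lemma~\ref{lem:cross_one} to produce curves that ``see'' one ergodic component much more than the others, and both use bilinearity for the easy upper bound. The only organisational difference is that the paper treats the case $i(\slam,\blam)=0$ uniformly by writing $\slam=\sum_j f_j\xi_j$ and $\blam=\sum_j g_j\xi_j$ over a common family of ergodic laminations and showing $\sup_\eta I(\eta)=\max_j(f_j/g_j)$ (which equals $+\infty$ precisely when some $g_j=0$, i.e.\ when $\slam\not\ll\blam$), whereas you split this into the two sub-cases $\slam\ll\blam$ and $\slam\not\ll\blam$ and argue each separately; and in the $i(\slam,\blam)>0$ case the paper simply takes $\eta=\blam$ rather than an ergodic component of $\blam$.
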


\begin{proof}
In the case where $i(\slam,\blam)>0$, we take $\eta:=\blam$ to get that
the supremum is~$\infty$.

So assume that $i(\slam,\blam)=0$.
In this case, we can write $\slam=\sum_j f_j \xi_j$ and
$\blam=\sum_j g_j \xi_j$, where the $\xi_j; j\in\{0,\dots,J\}$ are
a finite set of ergodic measured laminations pairwise having zero intersection
number,
and the $f_j$ and $g_j$ are non-negative coefficients such that, for all $j$,
either $f_j$ or $g_j$ is positive.
Relabel the indexes so that $\max_j(f_j/g_j)=f_0/g_0$.
We have
\begin{align*}
I(\eta) := \frac{i(\slam,\eta)}{i(\blam,\eta)}
   = \frac{\sum_j f_j i(\xi_j,\eta)}{\sum_j g_j i(\xi_j,\eta)}.
\end{align*}
Simple algebra establishes that $I(\eta)\le f_0/g_0$ for all $\eta$.

For each $C>0$, we apply Lemma~\ref{lem:cross_one}
to get a curve $\alpha_C$ such that $i(\xi_0,\alpha_C) > C i(\xi_j,\alpha_C)$
for all $j\in\{1,\dots,J\}$.
By choosing $C$ large enough, we can make $I(\alpha_C)$
as close as we like to $f_0/g_0$.

We conclude that $\sup_\eta I(\eta)=f_0/g_0$.
\end{proof}

\begin{proof}[Proof of Theorem~\ref{thm:detourcost}]
Let $F$ be the measured foliation corresponding to $\mu$.
So, $i(F,\alpha)=i(\mu,\alpha)$, for all $\alpha\in\curves$.
As in the proof of Theorem~\ref{thm:busemann}, we may find
a complete geodesic lamination $\mu'$ that is totally transverse to $F$.
Consider the stretch line $\geo(t):= \horocyclic_{\mu'}^{-1}(e^t F)$
directed by $\mu'$ and passing through $\horocyclic_{\mu'}^{-1}(F)$.

From the results in~\cite{papadopoulos_extension}, $\geo$ converges in the
positive direction to $[\mu]$ in the Thurston boundary.
So, by Theorem~\ref{thm:horothurston}, $\geo$ converges to $\Psi_\mu$
in the horofunction boundary. Therefore, since $\geo$ is a geodesic,
\begin{align*}
H(\Psi_\mu,\Psi_\nu)
   &= \lim_{t\to\infty}
             \Big(\stretchdist(b,\geo(t)) + \Psi_\nu(\geo(t))\Big) \\
   &= \lim_{t\to\infty} \Big(\log \sup_{\eta\in\measuredlams}
             \frac{\hlength_{\geo(t)}(\eta)}{\hlength_b(\eta)}
    + \log \sup_{\eta\in\measuredlams}
             \frac{i(\nu,\eta)}{\hlength_{\geo(t)}(\eta)} \Big)
    - \log \sup_{\eta\in\measuredlams} \frac{i(\nu,\eta)}{\hlength_b(\eta)}.
\end{align*}
From~\cite[Cor.~2]{theret_thesis}, for every $\eta\in\measuredlams$,
there exists a constant $C_\eta$ such that
\begin{align*}
i(\horocyclic_{\mu'}(\geo(t)),\eta) \le \hlength_{\geo(t)}(\eta)
    \le i(\horocyclic_{\mu'}(\geo(t)),\eta) + C_\eta,
\qquad\text{for all $t\ge0$}.
\end{align*}
So,
\begin{align}
\label{eqn:bounds}
i(F,\eta) \le e^{-t}\hlength_{\geo(t)}(\eta)
            \le i(F,\eta) + e^{-t} C_\eta,
\end{align}
for all $\eta\in\measuredlams$ and $t\ge0$.

So, $e^{-t}\hlength_{\geo(t)}$ converges pointwise to $i(F,\cdot)=i(\mu,\cdot)$
on $\measuredlams$. Since $\geo(t)$ converges to $[\mu]$,
we get, by Proposition~\ref{prop:uniform_lengths},
that $\lengthfunc_{\geo(t)}$ converges to $\lengthfunc_{\mu}$
uniformly on compact sets. Combining this with the convergence of
$e^{-t}\hlength_{\geo(t)}$, and evaluating at any measured lamination,
we see that $e^{-t} \lfactor(\geo(t))$ converges
to $\lfactor(\mu)$. Using again the convergence of $\lengthfunc_{\geo(t)}$,
we conclude that $e^{-t}\hlength_{\geo(t)}$ converges to $i(\mu,\cdot)$
uniformly on compact sets.
So,
\begin{align*}
\lim_{t\to\infty} \sup_{\eta\in\measuredlams}
              \frac{e^{-t}\hlength_{\geo(t)}(\eta)}{\hlength_b(\eta)}
   = \sup_{\eta\in\measuredlams} \frac{i(\mu,\eta)}{\hlength_b(\eta)}.
\end{align*}
From the left-hand inequality of~(\ref{eqn:bounds}), we get
\begin{align*}
\sup_{\eta\in\measuredlams} \frac{i(\nu,\eta)}{e^{-t}\hlength_{\geo(t)}(\eta)}
   \le \sup_{\eta\in\measuredlams} \frac{i(\nu,\eta)}{i(\mu,\eta)},
\qquad\text{for $t\ge0$}.
\end{align*}
But the limit of a supremum is trivially greater than or equal to the supremum
of the limits. We conclude that
\begin{align*}
\lim_{t\to\infty} \sup_{\eta\in\measuredlams}
         \frac{i(\nu,\eta)}{e^{-t}\hlength_{\geo(t)}(\eta)}
   = \sup_{\eta\in\measuredlams} \frac{i(\nu,\eta)}{i(\mu,\eta)}.
\end{align*}
The result now follows on applying Lemma~\ref{lem:max_intersection_ratio}.
\end{proof}

\begin{corollary}
\label{cor:detourmetric}
If $\slam$ and $\blam$ in $\measuredlams$ can be written in the form
$\slam =\sum_j f_j \eta_j$ and $\blam =\sum_j g_j \eta_j$, where the $\eta_j$
are ergodic elements of $\measuredlams$ that pairwise have zero intersection
number,
and the $f_j$ and $g_j$ are positive coefficients, then the detour metric
between $\Psi_\slam$ and $\Psi_\blam$ is
\begin{align*}
\delta(\Psi_\slam,\Psi_\blam)
    = \log \max_j \frac{f_i}{g_i} + \log \max_j \frac{g_i}{f_i}.
\end{align*}
If $\slam$ and $\blam$ cannot be simultaneously written in this form, then
$\delta(\Psi_\slam,\Psi_\blam)=+\infty$.
\end{corollary}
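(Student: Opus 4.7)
The approach is to exploit the symmetric decomposition $\delta(\Psi_\slam,\Psi_\blam) = H(\Psi_\slam,\Psi_\blam) + H(\Psi_\blam,\Psi_\slam)$ from~(\ref{eq:3.4}) and simply apply Theorem~\ref{thm:detourcost} to each summand separately. The hope is that the base-point-dependent normalizing terms $\log\lfactor(\slam)$ and $\log\lfactor(\blam)$ cancel against each other in the sum, leaving only the intrinsic max-ratio terms.

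Suppose first that we have a common decomposition $\slam = \sum_j f_j \eta_j$ and $\blam = \sum_j g_j \eta_j$ with $\eta_j$ ergodic, pairwise non-intersecting, and $f_j, g_j > 0$. Then the ergodic components of $\blam$ in the sense of Theorem~\ref{thm:detourcost} are $\blam_j := g_j \eta_j$, and rewriting gives $\slam = \sum_j (f_j/g_j) \blam_j$, so $\slam \ll \blam$. Theorem~\ref{thm:detourcost} then yields
\begin{align*}
H(\Psi_\blam,\Psi_\slam) = \log\lfactor(\blam) + \log \max_j \frac{f_j}{g_j} - \log\lfactor(\slam).
\end{align*}
Swapping the roles of $\slam$ and $\blam$ gives the analogous identity for $H(\Psi_\slam,\Psi_\blam)$. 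Adding the two, the $\log\lfactor$ contributions cancel and the stated formula drops out.

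For the second assertion, I would observe that a common representation of the required form, with all $f_j, g_j > 0$, exists if and only if $\slam$ and $\blam$ share the same finite set of ergodic components up to projective class, which is precisely the conjunction of $\slam \ll \blam$ and $\blam \ll \slam$. If no such common representation exists, at least one of these two relations fails, and Theorem~\ref{thm:detourcost} makes the corresponding detour cost infinite; hence $\delta(\Psi_\slam,\Psi_\blam) = +\infty$.

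I do not foresee a real obstacle; the only bookkeeping issue is that an ergodic measured lamination is only defined up to positive scaling of its transverse measure, so the coefficients $f_j, g_j$ depend on the chosen representatives $\eta_j$. However, the rescaling $\eta_j \mapsto c_j \eta_j$ replaces both $f_j$ and $g_j$ by $f_j/c_j$ and $g_j/c_j$, leaving each ratio $f_j/g_j$ invariant; so the right-hand side of the claimed formula is well defined. Beyond this, the corollary is essentially an algebraic consequence of Theorem~\ref{thm:detourcost}.
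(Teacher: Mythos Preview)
Your proposal is correct and is exactly the derivation the paper has in mind: the corollary is stated without proof, as it follows by adding the two instances of Theorem~\ref{thm:detourcost} and letting the $\log\lfactor(\slam)$ and $\log\lfactor(\blam)$ terms cancel. Your observation that the existence of a common positive decomposition is equivalent to $\slam\ll\blam$ and $\blam\ll\slam$ (via uniqueness of the ergodic decomposition) is the right justification for the second clause.
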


\section{Isometries}
\label{sec:isometries}

In this section, we prove Theorems~\ref{thm:isometries} and~\ref{thm:distinct}.

Recall that the curve complex $\curvecomp$ is the simplicial complex having\index{curve complex}
vertex set $\curves$, and where a set of vertices form a simplex when they have
pairwise disjoint representatives.
The automorphisms of the curve complex were characterised by\index{curve complex!automorphisms}
Ivanov~\cite{ivanov_automorphisms}, Korkmaz~\cite{korkmaz_automorphisms},
and Luo~\cite{luo_automorphisms}.\index{Ivanov--Korkmaz--Luo Theorem}
\begin{theorem}[Ivanov-Korkmaz-Luo]
\label{thm:ivanov_automorphism}
Assume that $S$ is not a sphere with four or fewer punctures,
nor a torus with two or fewer punctures. Then all automorphisms of
$\curvecomp$ are given by elements of $\modulargroup$.
\end{theorem}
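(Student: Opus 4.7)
The plan is to recover a surface homeomorphism from the combinatorial data of the curve complex, following the strategy pioneered by Ivanov. The key idea is that the curve complex, despite being a purely simplicial object, encodes enough topological information about $S$ to reconstruct $\modulargroup$, provided $S$ is complicated enough that the relevant configurations are combinatorially distinguishable.

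First I would identify topological invariants of curves directly from the combinatorics of $\curvecomp$. For a vertex $\alpha\in\curves$, define its link and star, and characterise in combinatorial terms whether $\alpha$ is separating or non-separating (for example via the connectivity of $\curvecomp\setminus \operatorname{star}(\alpha)$, or via the dimension of maximal simplices through $\alpha$ restricted to each ``side''). Then identify pants decompositions as top-dimensional simplices and show that any automorphism $\phi$ of $\curvecomp$ permutes them. Crucially, I would single out certain elementary moves between pants decompositions (``elementary moves'' of Hatcher–Thurston) combinatorially, so that $\phi$ must act by a permutation of pants decompositions compatible with these moves. This already gives $\phi$ the structure of a map induced by a bijection of isotopy classes that respects small-scale surface topology.

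Next I would bootstrap this to an actual homeomorphism of $S$. The standard tactic is to pick a suitable filling collection of curves (e.g.\ a pants decomposition together with dual curves producing a cut system), transport it by $\phi$, and build $f\colon S\to S$ by piecing together homeomorphisms of pairs of pants that realise the combinatorial identification. One then verifies that on every curve class the induced action matches $\phi$, and uses the Alexander method (or a standard rigidity argument via Dehn twists) to conclude that $\phi$ is induced by this $f$, uniquely up to isotopy. Well-definedness uses that an automorphism of $\curvecomp$ fixing all vertices of a filling system is the identity.

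The main obstacle is the presence of low-complexity exceptions, and this is exactly the hypothesis being imposed. In the excluded cases $S_{0,\le 4}$ and $S_{1,\le 2}$, pants decompositions are too degenerate, $\curvecomp$ is either empty, zero-dimensional or disconnected with an exceptional automorphism, so the combinatorial data is insufficient to pin down topological type; outside these cases one must still check carefully the small but nontrivial surfaces $S_{1,2}$, $S_{2,0}$, $S_{0,5}$ where extra symmetries (such as the hyperelliptic involution) threaten to produce ``accidental'' automorphisms, and show that each such automorphism is in fact realised by a mapping class. Handling these borderline surfaces (the work of Korkmaz and Luo extending Ivanov's original argument for higher genus) would be the technical heart of the proof.
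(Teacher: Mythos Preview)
The paper does not prove this theorem; it is quoted as a known external result, attributed to Ivanov, Korkmaz, and Luo, and used as a black box in the proof of Lemma~\ref{lem:same_on_PML}. There is therefore no proof in the paper to compare your proposal against.

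That said, your sketch is a reasonable outline of the classical strategy from the cited literature: characterise separating versus non-separating curves combinatorially via links and stars, identify pants decompositions as top-dimensional simplices, recognise elementary moves, and then reconstruct a surface homeomorphism from a filling configuration via the Alexander method. One small inaccuracy: you list $S_{1,2}$ among the ``small but nontrivial surfaces'' outside the excluded cases that require special care, but $S_{1,2}$ is in fact excluded by the hypothesis (it is a torus with two punctures). The reason it is excluded is precisely that $\curvecmp(S_{1,2})\cong\curvecmp(S_{0,5})$, so $\curvecmp(S_{1,2})$ admits automorphisms not induced by mapping classes of $S_{1,2}$; this is the content of Luo's contribution and is used elsewhere in the paper (Theorem~\ref{thm:distinct}). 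The surfaces that genuinely lie just inside the allowed range and need delicate treatment are $S_{2,0}$, $S_{0,5}$, and $S_{0,6}$.
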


We will also need the following theorem contained in~\cite{ivanov_isometries},
which was stated there for measured foliations rather than measured
laminations.
For each $\mu\in\measuredlams$, we define the set
$\mu^\perp := \{ \nu\in\measuredlams \mid i(\nu,\mu)=0 \}$.
\begin{theorem}[Ivanov]
\label{thm:ivanov_codimension}
Assume that $S$ is not a sphere with four or fewer punctures,
nor a torus with one or fewer punctures. Then the co-dimension of the set
$\mu^\perp$
in $\measuredlams$ is equal to 1 if and only if $\mu$ is a positive
multiple of a simple closed curve.
\end{theorem}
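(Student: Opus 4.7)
The plan is to handle the two directions separately, using the ergodic decomposition of $\mu$ together with the dimension formula $\dim\measuredlams(S_{g,n}) = 6g-6+2n$.

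For the ``if'' direction, suppose $\mu = c\curve$ for some simple closed curve $\curve$ and $c > 0$. Then $\mu^\perp = \curve^\perp$ is exactly the set of measured laminations disjoint from $\curve$, and every such lamination is uniquely described by a non-negative weight on $\curve$ together with a measured lamination on the (possibly disconnected) cut surface $S \setminus \curve$. An Euler-characteristic calculation shows that $\dim\measuredlams(S \setminus \curve) = \dim\measuredlams(S) - 2$, so $\mu^\perp$ has codimension one.

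For the converse, suppose $\mu$ is not a real multiple of a simple closed curve, and let $\mu = \sum_{j=1}^k a_j\mu_j$ be its ergodic decomposition; the components pairwise have zero intersection number, so $\mu^\perp = \bigcap_j \mu_j^\perp$. If $k \geq 2$, I would apply Lemma~\ref{lem:cross_one} to produce simple closed curves $\curve_1, \curve_2 \in \curves$ satisfying $i(\mu_1,\curve_1) > 0$ while $i(\mu_j,\curve_1) = 0$ for $j \neq 1$, and symmetrically for $\curve_2$. Passing to a train-track chart containing $\mu$ makes $i(\mu_1,\cdot)$ and $i(\mu_2,\cdot)$ into independent linear forms on the weight space, so their common zero locus has codimension at least two in $\measuredlams$. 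If instead $k = 1$, then $\mu$ is proportional to an ergodic measured lamination whose support $L$ is minimal and is not a simple closed curve. Letting $Y \subseteq S$ be the smallest subsurface filled by $L$, ergodicity of $\mu$ forces
\begin{align*}
\mu^\perp \;=\; \Rplus\mu \;+\; \measuredlams(\overline{S \setminus Y}),
\end{align*}
and a dimension count identifies the codimension of $\mu^\perp$ in $\measuredlams(S)$ as at least $\dim\measuredlams(Y) - 1$. The hypothesis on $S$ is precisely what guarantees $\dim\measuredlams(Y) \geq 3$ in all configurations, including the filling case $Y = S$; the excluded surfaces $S_{0,4}$ and $S_{1,1}$ are exactly those for which a minimal non-curve filling lamination would produce a one-dimensional $\mu^\perp$ of codimension one.

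The main obstacle is the $k=1$ case. One must justify the displayed decomposition of $\mu^\perp$ by combining the ergodicity of $\mu$ with the filling property of $L$ in $Y$, and then carefully execute the Euler-characteristic count when $Y$ is a proper subsurface of $S$ with boundary of potentially low complexity. The topological hypothesis on $S$ enters precisely here, ruling out exactly those small configurations where a proper subsurface $Y$ would be insufficient to guarantee the codimension bound.
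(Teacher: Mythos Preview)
The paper does not prove this theorem; it is quoted from Ivanov's paper~\cite{ivanov_isometries} and used as a black box. So there is no proof here to compare your attempt against, and I can only comment on your sketch on its own merits.

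The overall shape is reasonable, but several steps do not go through as written.

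In the $k\ge 2$ case, Lemma~\ref{lem:cross_one} does not yield a curve $\curve_1$ with $i(\mu_j,\curve_1)=0$ for $j\neq 1$; it only gives $i(\mu_1,\curve_1) > C\, i(\mu_j,\curve_1)$ for a prescribed constant $C$. Worse, such a curve need not exist: if $\mu_1$ and $\mu_2$ are distinct ergodic transverse measures on the \emph{same} minimal support $L$, then every curve crossing $L$ has positive intersection with both, and in fact $\mu_1^\perp=\mu_2^\perp$. So the ``two independent constraints'' picture collapses in exactly the non-uniquely-ergodic situation. (Also, intersection numbers are only piecewise linear in train-track coordinates, so the phrase ``independent linear forms on the weight space'' would need justification.)

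In the $k=1$ case, the displayed identity
\begin{align*}
\mu^\perp \;=\; \Rplus\mu \;+\; \measuredlams(\overline{S \setminus Y})
\end{align*}
is not correct in general. Ergodicity of $\mu$ does not force its support $L$ to be uniquely ergodic, so $\mu^\perp$ contains the full cone of transverse measures on $L$, not just $\Rplus\mu$. You also omit the components of $\partial Y$: these lie in $\mu^\perp$, but they are peripheral in $\overline{S\setminus Y}$ and hence are not elements of $\measuredlams(\overline{S\setminus Y})$. Finally, the assertion that the hypothesis on $S$ ``guarantees $\dim\measuredlams(Y)\ge 3$ in all configurations'' is false: $Y$ can be a one-holed torus or four-holed sphere inside a large $S$, with $\dim\measuredlams(Y)=2$. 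When $Y\subsetneq S$ the codimension is still at least~$2$, but this comes from the complement and from $\partial Y$ in the dimension count, not from a lower bound on $\dim\measuredlams(Y)$. The hypothesis on $S$ is genuinely needed only in the filling case $Y=S$.
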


By Theorem~\ref{thm:horothurston}, the horoboundary can be identified with
the Thurston boundary. So, the homeomorphism induced on the horoboundary by an
isometry of $\teichmullerspace$ may be thought of as a map from
$\projmeasuredlams$ to itself.

\begin{lemma}
\label{lem:preserveszero}
Let $\iso$ be an isometry of the Lipschitz metric.
For all $[\mu_1]$ and $[\mu_2]$ in $\projmeasuredlams$,
we have $i(\iso[\mu_1],\iso[\mu_2]) = 0$ if and only if $i([\mu_1],[\mu_2])=0$.
\end{lemma}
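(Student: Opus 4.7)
The plan is to characterise the condition $i([\mu_1],[\mu_2])=0$ purely in terms of the horofunction data, and then invoke the isometry invariance of the detour cost.

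The key intermediate claim is the following: for $[\mu_1],[\mu_2]\in\projmeasuredlams$, one has $i(\mu_1,\mu_2)=0$ if and only if there exists $[\rho]\in\projmeasuredlams$ such that both $H(\Psi_\rho,\Psi_{\mu_1})$ and $H(\Psi_\rho,\Psi_{\mu_2})$ are finite. For the forward direction, take $\rho:=\mu_1+\mu_2$, which is a well-defined measured lamination because $i(\mu_1,\mu_2)=0$; its ergodic components are obtained by merging the ergodic components of $\mu_1$ with those of $\mu_2$ (combining any that are projectively equal), so both $\mu_1\ll\rho$ and $\mu_2\ll\rho$, and Theorem~\ref{thm:detourcost} gives the required finiteness. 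For the reverse direction, if $\mu_i\ll\rho$ for $i=1,2$, then each $\mu_i$ is a non-negative combination of the ergodic components $\{\eta_j\}$ of $\rho$; since the $\eta_j$ pairwise have zero intersection number, bilinearity of $i(\cdot,\cdot)$ forces $i(\mu_1,\mu_2)=0$.

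With this characterisation in hand, the lemma is almost immediate. An isometry $\iso$ of $(\teichmullerspace,\stretchdist)$ extends to a homeomorphism of the horofunction compactification (Proposition~\ref{prop:transform_horo}) which, by Theorem~\ref{thm:horothurston}, may be viewed as a self-homeomorphism of $\projmeasuredlams$ sending $\Psi_\mu$ to $\Psi_{\iso[\mu]}$ up to an additive constant. By Proposition~\ref{prop:transformH}, such an isometry preserves finiteness of the detour cost $H$, since the two correction terms $\xi(\iso^{-1}(b'))$ and $\eta(\iso^{-1}(b'))$ are finite real numbers. So if $i([\mu_1],[\mu_2])=0$ and $[\rho]$ is the witness produced above, then $[\iso\rho]$ witnesses finiteness of $H(\Psi_{\iso\mu_1},\cdot)$ and $H(\Psi_{\iso\mu_2},\cdot)$, and therefore $i(\iso[\mu_1],\iso[\mu_2])=0$. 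The converse implication is obtained by applying the same argument to $\iso^{-1}$.

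The main thing to be careful about is the ergodic-component bookkeeping in the forward half of the characterisation: one needs that $i(\mu_1,\mu_2)=0$ really does imply that the union of the two ergodic decompositions is a family of ergodic laminations pairwise having zero intersection (so that $\mu_1+\mu_2$ lies in $\measuredlams$ and admits this family as its own ergodic decomposition). This is routine given the standard structure theory of measured laminations, but it is the one spot where the proof uses something beyond the formal manipulation of the detour cost.
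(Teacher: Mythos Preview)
Your proof is correct and follows essentially the same strategy as the paper: characterise $i([\mu_1],[\mu_2])=0$ by the existence of a common $[\rho]$ with $[\mu_1]\ll[\rho]$ and $[\mu_2]\ll[\rho]$ (equivalently, finite detour cost from $\Psi_\rho$ to each $\Psi_{\mu_i}$), and then use the isometry-invariance of finiteness of $H$ from Proposition~\ref{prop:transformH} together with Theorem~\ref{thm:detourcost}. You spell out both directions of the characterisation more carefully than the paper does; the only slip is notational, where ``$[\iso\rho]$ witnesses finiteness of $H(\Psi_{\iso\mu_1},\cdot)$'' should read ``$H(\Psi_{\iso\rho},\Psi_{\iso\mu_i})$ is finite for $i=1,2$''.
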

\begin{proof}
For any two elements $[\mu]$ and $[\eta]$ of $\projmeasuredlams$, we have,
by Proposition~\ref{prop:transformH}, that
$H(\Psi_{[\eta]},\Psi_{[\mu]})$ is finite if and only if
$H(\Psi_{\iso[\eta]},\Psi_{\iso[\mu]})$ is finite.
Also, from Theorem~\ref{thm:detourcost},
$H(\Psi_{[\eta]},\Psi_{[\mu]})$ is finite if and only if $[\mu]\ll[\eta]$.
It follows that $\iso$ preserves the relation $\ll$ on $\projmeasuredlams$.

Two elements $[\mu_1]$ and $[\mu_2]$ of $\projmeasuredlams$ satisfy
$i([\mu_1],[\mu_2])=0$ if and only if there is some projective measured
lamination $[\eta]\in\projmeasuredlams$ such that $[\mu_1]\ll[\eta]$
and $[\mu_2]\ll[\eta]$.

We conclude that $[\mu_1]$ and $[\mu_2]$ have zero intersection number
if and only if $f[\mu_1]$ and $f[\mu_2]$ have zero intersection number.
\end{proof}

\begin{lemma}
\label{lem:same_on_PML}
Let $\iso$ be an isometry of $(\teichmullerspace,\stretchdist)$.
Then, there is an extended mapping class that agrees with $\iso$ on
$\projmeasuredlams$.
\end{lemma}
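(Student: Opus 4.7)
The plan is to reduce the statement to the Ivanov--Korkmaz--Luo theorem on automorphisms of the curve complex. The isometry $\iso$ extends continuously to a homeomorphism of the horofunction compactification (by Proposition~\ref{prop:transform_horo}, together with the standard fact that isometries extend to the boundary), so by Theorem~\ref{thm:horothurston} it induces a homeomorphism of $\projmeasuredlams$ which, by convenient abuse of notation, I also denote $\iso$. My goal is to show that this induced map coincides, on all of $\projmeasuredlams$, with the action of some extended mapping class.

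First I would show that $\iso$ maps the subset of $\projmeasuredlams$ consisting of (projective classes of) weighted simple closed curves bijectively to itself. For each $[\mu]\in\projmeasuredlams$, define
\begin{align*}
[\mu]^\perp := \{[\nu]\in\projmeasuredlams \mid i(\mu,\nu)=0\},
\end{align*}
which is the projectivization of the cone $\mu^\perp\subset\measuredlams$ and therefore has the same codimension in $\projmeasuredlams$ as $\mu^\perp$ has in $\measuredlams$. By Lemma~\ref{lem:preserveszero}, $\iso([\mu]^\perp)=(\iso[\mu])^\perp$. Since $\iso$ is a homeomorphism of the sphere $\projmeasuredlams$, it preserves the topological (covering) dimension of closed subsets, hence preserves codimension. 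By Theorem~\ref{thm:ivanov_codimension}, having codimension one in $\measuredlams$ (equivalently in $\projmeasuredlams$) characterizes projective classes of simple closed curves, so $\iso$ permutes such classes.

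Next, by Lemma~\ref{lem:preserveszero} again, $\iso$ preserves zero intersection among simple closed curves, which means it induces a simplicial automorphism of the curve complex $\curvecomp$. By the Ivanov--Korkmaz--Luo Theorem (Theorem~\ref{thm:ivanov_automorphism}), this automorphism is realised by some element $\mapclass\in\modulargroup$. The extended mapping class $\mapclass$ acts on $\projmeasuredlams$ by homeomorphisms in a way that agrees with $\iso$ on the set of projective classes of simple closed curves.

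Finally, since projective classes of weighted simple closed curves are dense in $\projmeasuredlams$, and both $\iso$ and $\mapclass$ act as homeomorphisms there, the two maps coincide on all of $\projmeasuredlams$. The main technical point is the step identifying simple closed curves intrinsically from the boundary homeomorphism; the cleanest way is through the dimension-theoretic characterization of $[\mu]^\perp$, which I expect to be the most delicate part since it requires combining Ivanov's codimension result with the topological invariance of dimension on $\projmeasuredlams$. The remaining steps are then essentially formal.
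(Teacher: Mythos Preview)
Your proposal is correct and follows essentially the same route as the paper's proof: use Lemma~\ref{lem:preserveszero} together with Ivanov's codimension characterisation (Theorem~\ref{thm:ivanov_codimension}) to show that the induced boundary homeomorphism permutes projective classes of simple closed curves, then invoke Ivanov--Korkmaz--Luo (Theorem~\ref{thm:ivanov_automorphism}) and conclude by density. The only cosmetic difference is that the paper lifts the boundary map to a positively homogeneous homeomorphism $f_*$ of $\measuredlams$ and argues about codimension there, whereas you work directly in $\projmeasuredlams$; both are equivalent since $\mu^\perp$ is a cone.
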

\begin{proof}
By Theorem~\ref{thm:horothurston}, the horoboundary can be identified with
the Thurston boundary.
So, the map induced by $\iso$ on the horoboundary is a homeomorphism of
$\projmeasuredlams$ to itself.

We identify $\projmeasuredlams$ with the level set
$\{\mu\in\measuredlams \mid \hlength_b(\mu)=1\}$.
There is then a unique way of extending the map
$\iso:\projmeasuredlams\to\projmeasuredlams$ to a positively homogeneous map
$\iso_*:\measuredlams\to\measuredlams$. Evidently, $\iso_*$ is
also a homeomorphism.

From Lemma~\ref{lem:preserveszero},
we get that $(\iso_*\mu)^\perp = \iso_*(\mu^\perp)$ for all
$\mu\in\measuredlams$.
By Theorem~\ref{thm:ivanov_codimension}, positive multiples of simple closed
curves can be characterised in $\measuredlams$ as those elements $\mu$
such that the co-dimension of the set $\mu^\perp$ equals $1$.
Since $\iso_*$ is a homeomorphism on $\measuredlams$, it preserves
the co-dimension of sets.
We conclude that $\iso_*$ maps elements of $\measuredlams$ of the form
$\lambda\curve$, where $\lambda>0$ and $\curve\in\curves$,
to elements of the same form.

So, $\iso$ induces a bijective map on the vertices of the curve complex.
From Lemma~\ref{lem:preserveszero} it is clear that there is an edge
between two curves $\curve_1$ and $\curve_2$ in $\curvecomp$
if and only if there is an edge between $\iso\curve_1$ and $\iso\curve_2$.
Using the fact that a set of vertices span a simplex
exactly when every pair of vertices in the set have an edge connecting them,
we see that $\iso$ acts as an automorphism on the curve complex.

We now apply Theorem~\ref{thm:ivanov_automorphism} of Ivanov-Korkmaz-Luo
to deduce that there is some extended mapping class $\mapclass\in\modulargroup$
that agrees with $\iso$ on $\curves$, considered as a subset of
$\projmeasuredlams$. But $\curves$ is dense in $\projmeasuredlams$,
and the actions of $\mapclass$ and $\iso$ on $\projmeasuredlams$ are both continuous.
Therefore, $\mapclass$ and $\iso$ agree on all of $\projmeasuredlams$.
\end{proof}

\begin{lemma}
\label{lem:horofunctions_distinguish_points}
Let $x$ and $y$ be points of $\teichmullerspace$.
If $\Psi_\nu(x)=\Psi_\nu(y)$ for all horofunctions $\Psi_\nu$, then $x=y$.
\end{lemma}

\begin{proof}
Assume that $x$ and $y$ are distinct, and let $\mu(x,y)$ be the maximal
chain-recurrent lamination that maximises stretch between $x$ and $y$.
Thurston showed~\cite[Theorem 8.5]{thurston_minimal}
that there exists a geodesic segment from $x$ to $y$ consisting of a
concatenation of a finite number of segments of stretch lines, each of which
stretches along some complete geodesic lamination containing $\mu(x,y)$.
We extend this geodesic segment to a ray $\gamma$ by continuing along the
final stretch line.

The ray $\gamma$ is geodesic since, along it, $\mu(x,y)$ is maximally stretched.

Let $[\nu]\in\projmeasuredlams$ be the limit in the Thurston boundary
in the forward direction along $\gamma$.
Since $y$ lies on a geodesic ray connecting $x$ to $\nu$,
we have $\Psi_\nu(x) = L(x,y) + \Psi_\nu(y)$.
Thus, $\Psi_\nu(x)$ and $\Psi_\nu(y)$ differ.
\end{proof}

\begin{lemma}
\label{lem:hyperbolic_boundary}
Let $[\nu_1]$ and $[\nu_2]$ be points in $\projmeasuredlams$,
and let $x_n$ be a sequence in $\teichmullerspace$ such that
$\Psi_{\nu_1}(x_n)$ and $\Psi_{\nu_2}(x_n)$ both converge to $-\infty$.
Then, $i(\nu_1,\eta)+i(\nu_2,\eta)=0$ for some
$\eta\in\measuredlams\backslash\{0\}$.
\end{lemma}

\begin{proof}
The function $\Psi_{\nu_1}$ is continuous and therefore bounded below on any
compact set. We deduce that $x_n$ leaves every compact set,
and so $L(x_n,b)$ converges to infinity.
From~(\ref{eqn:dist_formula}),
we get
\begin{align}
\label{eqn:some_length_goes_to_zero}
\sup_{\eta\in\unitlams} \frac{1}{\hlength_{x_n}(\eta)}
   \rightarrow \infty,
\end{align}
as $n$ tends to infinity.

From the convergence of $\Psi_{\nu_1}(x_n)$ and $\Psi_{\nu_2}(x_n)$
to $-\infty$, we get
\begin{align*}
\sup_{\eta\in\unitlams} \frac{i(\nu_k,\eta)}{\hlength_{x_n}(\eta)}
   \rightarrow 0,
\qquad\text{for $k\in\{1,2\}$.}
\end{align*}
It follows that
\begin{align*}
\sup_{\eta\in\unitlams} \frac{i(\nu_1,\eta)+i(\nu_2,\eta)}{\hlength_{x_n}(\eta)}
   \rightarrow 0.
\end{align*}
Observe that this would contradict~(\ref{eqn:some_length_goes_to_zero})
if $i(\nu_1,\cdot)+i(\nu_2,\cdot)$ was bounded below on $\unitlams$
by a positive constant.
The conclusion now follows since $i(\nu_1,\cdot)+i(\nu_2,\cdot)$
attains its infimum on $\unitlams$.
\end{proof}

\begin{lemma}
\label{lem:same_on_T}
Let $\iso$ and $\mapclass$ be two isometries of
$(\teichmullerspace,\stretchdist)$.
If the extensions of $\iso$ and $\mapclass$ coincide on the boundary
$\projmeasuredlams$, then $\iso=\mapclass$.
\end{lemma}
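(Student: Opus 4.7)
The plan is to set $g:=\mapclass^{-1}\after\iso$, which is an isometry of $(\teichmullerspace,\stretchdist)$ inducing the identity on the Thurston boundary $\projmeasuredlams$, and to prove $g(y)=y$ for every $y\in\teichmullerspace$. Fix $y\in\teichmullerspace$ and choose any complete geodesic lamination $\mu$ whose stump $\mu_0$ is maximal and uniquely ergodic. By Lemma~\ref{lem:unique_geo}, $\stretchline_{\mu,y}$ is the unique geodesic with negative endpoint $[\mu_0]$ and positive endpoint $[F_\mu(y)]$; since $g$ fixes both of these boundary points, $g(\stretchline_{\mu,y})$ coincides with $\stretchline_{\mu,y}$ as a set. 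Because $g$ preserves the positive endpoint (and so cannot reverse the parameterization, which would swap the two ends), its restriction to this line must be a forward translation $t\mapsto t+c_\mu$, so $g(y)=\stretchline_{\mu,y}(c_\mu)$, and Thurston's stretching property yields $\hlength_{g(y)}(\mu_0)=e^{c_\mu}\,\hlength_y(\mu_0)$.

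I would next observe that $c_\mu$ lies in the two-element set $\{d,-d'\}$, where $d:=\stretchdist(y,g(y))$ and $d':=\stretchdist(g(y),y)$. Indeed, if $c_\mu>0$ then the forward distance along the stretch line gives $\stretchdist(y,g(y))=c_\mu$, forcing $c_\mu=d$; if $c_\mu<0$, then similarly $c_\mu=-d'$; and $c_\mu=0$ would give $g(y)=y$ directly. Hence the continuous positive function $[\eta]\mapsto \hlength_{g(y)}(\eta)/\hlength_y(\eta)$ on $\projmeasuredlams$ takes values in $\{e^d,e^{-d'}\}$ on the dense subset of maximal uniquely ergodic projective classes.

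Since $\projmeasuredlams$ is connected and this ratio is continuous, it must be globally constant, equal to some $\lambda\in\{e^d,e^{-d'}\}$. Therefore $\hlength_{g(y)}=\lambda\,\hlength_y$ on all of $\measuredlams$; formula~(\ref{eqn:dist_formula}) then gives $\stretchdist(y,g(y))=\log\lambda$ and $\stretchdist(g(y),y)=-\log\lambda$, and both being non-negative forces $\lambda=1$, whence $g(y)=y$ by the injectivity of the length-function embedding $\teichmullerspace\hookrightarrow\R^{\measuredlams}$.

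The main obstacle lies in the first paragraph: justifying that $g$ acts by forward translation on every max-uniquely-ergodic stretch line. This relies on Lemma~\ref{lem:unique_geo} together with the observation that an isometry cannot reverse such a geodesic, since doing so would interchange its forward and backward boundary limits. The remaining density/connectedness step uses only classical facts about $\projmeasuredlams$ and continuity of the intersection form.
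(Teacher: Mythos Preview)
Your proof is correct, and the opening move---setting $g=\mapclass^{-1}\after\iso$ and invoking Lemma~\ref{lem:unique_geo} to see that $g$ acts by translation on each stretch line through $y$ with maximal uniquely-ergodic stump---matches the paper exactly. After that point the two arguments diverge. The paper picks just \emph{two} such stretch lines through the point, with stumps $\mu_0$ and $\mu_0'$ chosen so that $[\mu_0']$ is not an endpoint of the first line; if the translation along the second line were nonzero, iterating $g^{\pm 1}$ would send the point to $[\mu_0']$ while keeping it on the first line, whose only boundary limits are $[\mu_0]$ and $[F_\mu(x)]$---a contradiction. You instead range over \emph{all} maximal uniquely-ergodic stumps, use the stretching relation $\hlength_{\stretchline(t)}(\mu_0)=e^{t}\hlength_y(\mu_0)$ to pin the ratio $\hlength_{g(y)}(\mu_0)/\hlength_y(\mu_0)$ to the two-point set $\{e^{d},e^{-d'}\}$, and then invoke density of these laminations together with connectedness of $\projmeasuredlams$ to force the ratio to be globally constant, hence equal to $1$. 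The paper's route is more economical (two stretch lines suffice, and no topological facts about $\projmeasuredlams$ are needed), whereas yours trades that economy for an explicit quantitative link between the translation amount and hyperbolic lengths and a clean connectedness punch-line.

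One small remark: your justification that $g$ ``cannot reverse the parameterization'' is unnecessary. Since $g$ is an isometry of the \emph{asymmetric} metric, $g\after\stretchline_{\mu,y}$ is automatically a unit-speed geodesic in the forward sense; Lemma~\ref{lem:unique_geo} then forces it to equal $\stretchline_{\mu,y}$ up to an additive shift of parameter.
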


\begin{proof}
The map $g:=\mapclass^{-1}\after\iso$ is an isometry of
$(\teichmullerspace,\stretchdist)$.

Suppose that there exists some point $b$ in $\teichmullerspace$
that is not fixed by $g$.
By Lemma~\ref{lem:horofunctions_distinguish_points}, there is a
horofunction $\Psi_\nu$ such that $\Psi_\nu(g(b))\neq\Psi_\nu(b)$.
By considering the inverse of $g$ if necessary, we may assume that
$\Psi_\nu(g(b))<\Psi_\nu(b)$.
Take $b$ to be the base-point, so that $\Psi_\nu(b)=0$.

Recall that the set of maximal uniquely-ergodic measured laminations is dense
in $\projmeasuredlams$. So, we may find two projectively-distinct maximal
uniquely-ergodic measured laminations $\nu_1$ and $\nu_2$ that are sufficiently
close to $\nu$ that $\Psi_{\nu_1}(g(b))<\Psi_{\nu_1}(b)=0$ and
$\Psi_{\nu_2}(g(b))<\Psi_{\nu_2}(b)=0$.

Consider the sequence of iterates $x_n:= g^n(b)$.

The map $g^{-1}$ extends continuously to the horofunction/Thurston boundary,
and fixes every point of this boundary. So, by
Proposition~\ref{prop:transform_horo},
we have $\xi(\cdot) = \xi(g(\cdot)) - \xi(g(b))$ for all horofunctions $\xi$.

Applying this to $\xi:=\Psi_{\nu_1}$, we get
$\xi(x_n) = \xi(x_{n-1}) + \xi(g(b))$, for all $n\ge 1$.
Therefore, $\xi(x_n) = n\xi(g(b))$ for all $n\ge 1$,
and so $\xi(x_n)$ converges to $-\infty$, as $n$ tends to infinity.

Similarly, $\Psi_{\nu_2}(x_n)$ converges to $-\infty$.

We now apply Lemma~\ref{lem:hyperbolic_boundary} to get that
$i(\nu_1,\eta)+i(\nu_2,\eta)=0$ for some $\eta\in\measuredlams\backslash\{0\}$.
But this is a contradiction, since $\nu_1$ and $\nu_2$ are
projectively-distinct maximal uniquely-ergodic measured laminations.

We deduce that every point of Teichm\"uller space is fixed by $g$,
and so $f=h$.
\end{proof}

\begin{theorem}
\label{thm:isometries}
If $S$ is not a sphere with four or fewer punctures,
nor a torus with two or fewer punctures, then every isometry of
$\teichmullerspace$ equipped with Thurston's Lipschitz metric is an element
of the extended mapping class group $\modulargroup$.
\end{theorem}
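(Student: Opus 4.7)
The plan is to combine the two main preparatory lemmas, namely Lemma~\ref{lem:same_on_PML} and Lemma~\ref{lem:same_on_T}, which together already carry almost all the weight of the theorem. Given any isometry $\iso$ of $(\teichmullerspace,\stretchdist)$, my first step is to invoke Lemma~\ref{lem:same_on_PML} to produce an extended mapping class $\mapclass\in\modulargroup$ whose induced homeomorphism on $\projmeasuredlams$ agrees with the homeomorphism induced by $\iso$ on the horofunction/Thurston boundary (identified via Theorem~\ref{thm:horothurston}).

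Next, since $\modulargroup$ acts by isometries on $(\teichmullerspace,\stretchdist)$, the map $\mapclass$ is itself an isometry, so both $\iso$ and $\mapclass$ are isometries whose boundary extensions coincide on all of $\projmeasuredlams$. Applying Lemma~\ref{lem:same_on_T} then forces $\iso=\mapclass$ on $\teichmullerspace$, which gives $\iso\in\modulargroup$ as required.

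The main obstacle has really already been handled in the lemmas: Lemma~\ref{lem:same_on_PML} hinges on recovering the curve complex from the detour-cost structure via Lemma~\ref{lem:preserveszero} (so that $\iso$ preserves the relation $\ll$ and hence zero intersection), on recognising simple closed curves intrinsically through Ivanov's codimension criterion (Theorem~\ref{thm:ivanov_codimension}), and on invoking the Ivanov--Korkmaz--Luo Theorem (Theorem~\ref{thm:ivanov_automorphism}) to realise the resulting curve-complex automorphism by a mapping class. The exceptional surfaces excluded in the statement are precisely those for which one of these two rigidity inputs fails, which is why the hypothesis on $S$ appears. Lemma~\ref{lem:same_on_T}, in turn, uses that stretch lines directed by complete geodesic laminations with maximal uniquely-ergodic stump are the unique geodesics joining their endpoints in the Thurston compactification, so that an isometry fixing the boundary pointwise must preserve each such stretch line and hence, by considering two stretch lines through a given point with disjoint negative endpoints, fix every point of $\teichmullerspace$. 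Once these two inputs are in place, the theorem itself is a one-line assembly and requires no further argument.
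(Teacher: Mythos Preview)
Your proposal is correct and matches the paper's own proof exactly: the paper simply states that the theorem is a consequence of Lemmas~\ref{lem:same_on_PML} and~\ref{lem:same_on_T}, and your write-up spells out precisely this two-step assembly (produce $\mapclass\in\modulargroup$ agreeing with $\iso$ on $\projmeasuredlams$, then conclude $\iso=\mapclass$ on $\teichmullerspace$). Your additional commentary on why the surface hypotheses enter through Theorems~\ref{thm:ivanov_codimension} and~\ref{thm:ivanov_automorphism} is accurate and adds helpful context beyond the paper's one-line proof.
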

\begin{proof}
This is a consequence of Lemmas~\ref{lem:same_on_PML} and~\ref{lem:same_on_T}.
\end{proof}

\begin{theorem}
\label{thm:distinct}
Let $S_{g,n}$ and $S_{g',n'}$ be surfaces of negative Euler characteristic.
Assume $\{(g,n), (g',n')\}$ is different from each of the three sets
\begin{align*}
\{(1,1), (0,4)\}, \qquad \{(1,2), (0,5)\}, \qquad{and}\quad \{(2,0), (0,6)\}.
\end{align*}
If $(g,n)$ and $(g',n')$ are distinct, then the Teichm\"uller spaces
$\teichmullerspc(S_{g,n})$ and $\teichmullerspc(S_{g',n'})$
with their respective Lipschitz metrics are not isometric.
\end{theorem}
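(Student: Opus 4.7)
The plan is to replay the strategy of Theorem~\ref{thm:isometries} in an inter-surface setting.  Suppose, for contradiction, that $\iso\colon(\teichmullerspc(S_{g,n}),\stretchdist)\to(\teichmullerspc(S_{g',n'}),\stretchdist)$ is an isometry.  Since $\symdist$ induces the standard manifold topology on each Teichm\"uller space, $\iso$ is a homeomorphism between open cells of dimensions $6g-6+2n$ and $6g'-6+2n'$, and invariance of domain forces
\begin{equation*}
6g-6+2n = 6g'-6+2n',
\end{equation*}
reducing us to finitely many candidate pairs in each dimension.

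Next I would extend $\iso$ to a homeomorphism $\iso_*\colon\projmeasuredlams(S_{g,n})\to\projmeasuredlams(S_{g',n'})$ using Proposition~\ref{prop:transform_horo} and Theorem~\ref{thm:horothurston}.  Proposition~\ref{prop:transformH} transports finiteness of the detour cost across $\iso_*$, while Theorem~\ref{thm:detourcost} identifies that finiteness precisely with the relation $\slam\ll\blam$.  Arguing exactly as in Lemma~\ref{lem:preserveszero}, this in turn encodes zero intersection number, so $\iso_*$ preserves $i([\mu_1],[\mu_2])=0$.

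Provided we lie outside the range where Theorem~\ref{thm:ivanov_codimension} is silent---which, among surfaces of negative Euler characteristic, excludes only $S_{0,3}$, $S_{0,4}$, and $S_{1,1}$---the projective classes of simple closed curves are characterised intrinsically as those $[\mu]$ with $[\mu]^{\perp}$ of codimension one in $\projmeasuredlams$.  Because $\iso_*$ is a homeomorphism preserving $\perp$, it preserves this characterisation as well as disjointness, and thus induces a simplicial isomorphism
\begin{equation*}
\curvecmp(S_{g,n}) \cong \curvecmp(S_{g',n'}).
\end{equation*}
The Ivanov--Korkmaz--Luo classification, together with its inter-surface companions, then says that two surfaces outside the three excepted pairs with isomorphic curve complexes must be homeomorphic, giving $(g,n)=(g',n')$.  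The small excluded surfaces live in Teichm\"uller-space dimensions $0$ and $2$, where the dimension equality above confines them either to themselves or to the excepted pair $\{(0,4),(1,1)\}$.

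The hard part will be this final curve complex rigidity step in the \emph{ternary} dimensional coincidences $\{(g,n)\mid 3g+n=d\}$ for $d\ge 6$, starting with $\{S_{2,0},S_{1,3},S_{0,6}\}$ in dimension~$6$, where only one pair is on the excepted list.  Separating $\curvecmp(S_{1,3})$ from each of $\curvecmp(S_{2,0})$ and $\curvecmp(S_{0,6})$ (and analogously in higher dimensions) requires a combinatorial invariant of the abstract curve complex --- such as the structure of vertex links, or the count of maximal simplices through a fixed edge --- sharp enough to distinguish closed, once-, twice-, and more-punctured surfaces of the same Teichm\"uller dimension.
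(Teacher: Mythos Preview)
Your overall strategy is exactly the paper's: pass to the horoboundary, use the detour cost to preserve the relation $\ll$ and hence zero intersection, invoke Theorem~\ref{thm:ivanov_codimension} to recognise simple closed curves, and thereby obtain a simplicial isomorphism $\curvecmp(S_{g,n})\cong\curvecmp(S_{g',n'})$. The low-complexity cases are handled by dimension, just as you do.

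The one place you overcomplicate matters is the last paragraph. What you call ``the hard part''---distinguishing $\curvecmp(S_{1,3})$ from $\curvecmp(S_{2,0})$ and $\curvecmp(S_{0,6})$, and the analogous separations in higher dimension---is not something you need to do by hand. It is already in the literature: Luo proves (see \cite[Lemma~2.1]{luo_automorphisms}) that, among surfaces of negative Euler characteristic that are not $S_{0,3}$, $S_{0,4}$, or $S_{1,1}$, the only simplicial isomorphisms between curve complexes of \emph{distinct} surfaces are $\curvecmp(S_{1,2})\cong\curvecmp(S_{0,5})$ and $\curvecmp(S_{2,0})\cong\curvecmp(S_{0,6})$. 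The paper simply cites this result and is done; you should too. There is no need to design new combinatorial invariants of vertex links or simplex counts.
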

\begin{proof}
Let $S_{g,n}$ and $S_{g',n'}$ be two surfaces of negative Euler characteristic,
and let $\iso$ be an isometry between the associated Teichm\"uller spaces
$\teichmullerspc(S_{g,n})$ and $\teichmullerspc(S_{g',n'})$, each endowed
with its respective Lipschitz metric.

Recall that the dimension of $\teichmullerspc(S_{g,n})$ is $6g-6+2n$.
Consider the cases not covered by Ivanov's Theorem~\ref{thm:ivanov_codimension},
namely the sphere with three or four punctures, and the torus with one
puncture. A comparison of dimension shows that $\teichmullerspc(S_{0,3})$
is not isometric to any other Teichm\"uller space, and that
$\teichmullerspc(S_{0,4})$ and $\teichmullerspc(S_{1,1})$ may be isometric
to each another but not to any other Teichm\"uller space.

So assume that neither $\teichmullerspc(S_{g,n})$ nor
$\teichmullerspc(S_{g',n'})$ is one of these exceptional surfaces.

By the same reasoning as in the first part of the proof of
Lemma~\ref{lem:same_on_PML}, we conclude that $\iso$ induces an isomorphism
from the curve complex of $S_{g,n}$ to that of $S_{g',n'}$.
However, according to~\cite[Lemma~2.1]{luo_automorphisms},
the only isomorphisms between curve complexes are
$\curvecmp(S_{1,2})\cong\curvecmp(S_{0,5})$ and
$\curvecmp(S_{2,0})\cong\curvecmp(S_{0,6})$.
The conclusion follows.
\end{proof}


\bibliographystyle{abbrv}
\bibliography{stretch}

\end{document}